\documentclass{article}
\usepackage[%
journal=FoCM,
lang=american,
]{ems-journal}

\usepackage{silence}
\usepackage{nag}

\usepackage{amsfonts}

\usepackage{mathtools}
\usepackage{bm}
\usepackage{calc}

\usepackage{tikz}
\usetikzlibrary{decorations.pathreplacing,calligraphy}
\usepackage{nicematrix}

\usepackage{diagbox}
\usepackage{multirow}
\usepackage{siunitx}

\usepackage{algorithm}
\usepackage[noEnd=true]{algpseudocodex}

\usepackage{cleveref}

\makeatletter
\def\input@path{{./Tables}}
\makeatother

\graphicspath{{./Figures_lattice}}

\allowdisplaybreaks

\usepackage{enumitem}
\setlist[enumerate]{leftmargin=.5in}
\setlist[itemize]{leftmargin=.5in}

\theoremstyle{plain}
  \newtheorem{theorem}{Theorem}[section]
  
  \newtheorem{lemma}[theorem]{Lemma}
\theoremstyle{definition}
  \newtheorem{definition}[theorem]{Definition}
  
  \newtheorem{remark}[theorem]{Remark}

\numberwithin{equation}{section} 

\crefformat{equation}{(#2#1#3)}
\Crefformat{equation}{Equation~(#2#1#3)}
\Crefmultiformat{equation}{Equations~(#2#1#3)}{ and~(#2#1#3)}{, (#2#1#3)}{ and~(#2#1#3)}

\crefname{theorem}{Theorem}{Theorems}
\crefname{lemma}{Lemma}{Lemmas}
\crefname{proposition}{Proposition}{Propositions}
\crefname{definition}{Definition}{Definitions}
\crefname{example}{Example}{Examples}
\crefname{remark}{Remark}{Remarks}
\crefname{figure}{Figure}{Figures}
\crefname{algorithm}{Algorithm}{Algorithms}
\crefname{table}{Table}{Tables}
\crefname{section}{Section}{Sections}
\crefname{subsection}{Section}{Sections} 

\makeatletter
\newcommand{\getcrefname}[1]{\csname cref@#1@name\endcsname}
\newcommand{\getCrefname}[1]{\csname Cref@#1@name\endcsname}
\newcommand{\getcrefnameplural}[1]{\csname cref@#1@name@plural\endcsname}
\makeatother

\DeclarePairedDelimiterX\set[1]\lbrace\rbrace{\def\given{\;\delimsize\vert\;}#1}
\DeclarePairedDelimiterX\event[1][]{\def\given{\;\delimsize\vert\;}#1}
\DeclarePairedDelimiter{\abs}{\lvert}{\rvert}
\DeclarePairedDelimiter{\norm}{\lVert}{\rVert}
\DeclarePairedDelimiter{\floor}{\lfloor}{\rfloor}
\DeclarePairedDelimiter{\round}{\lfloor}{\rceil}

\DeclarePairedDelimiter{\pareno}{(}{)} 
\NewDocumentCommand{\bigo}{s o m}{%
  \mathcal{O}\IfBooleanTF{#1}
    {\pareno*{#3}}%
    {\IfValueTF{#2}{\pareno[#2]{#3}}{\pareno{#3}}}%
}

\DeclareMathOperator*{\lcm}{lcm}
\DeclareMathOperator{\var}{var}

\DeclareMathOperator{\nullity}{nullity}
\newcommand{\bP}{\operatorname{\mathbb{P}}}
\newcommand{\bE}{\operatorname{\mathbb{E}}}
\newcommand{\R}{\mathbb{R}}
\newcommand{\C}{\mathbb{C}}
\newcommand{\Z}{\mathbb{Z}}

\newcommand{\cu}{\mathrm i}

\newcommand{\divs}{\operatorname{\mid}}
\newcommand{\ndivs}{\operatorname{\nmid}}

\newcommand{\uroot}[2]{\eta_{#1}^{#2}}
\newcommand{\idmat}[1]{{\tt I}_{#1}}

\renewcommand{\hat}{\widehat}

\newcommand{\binomdist}[2]{{\rm binom}(#1, #2)}
\newcommand{\normaldist}[2]{\mathcal{N}(#1, #2)}
\newcommand{\chisqdist}[1]{\chi_{#1}^2}
\newcommand{\ncxsqdist}[2]{\chi_{#1}'^2(#2)}

\begin{document}

\title{Recovery of Integer Signals from Limited DFT Samples: Lattice Methods and Stability Analysis}
\titlemark{Recovery of Integer Signals from Limited DFT Samples: Lattice Methods and Stability Analysis}


%

\emsauthor{1}{
	\givenname{Howard W.}
	\surname{Levinson}
	\zblid{levinson.howard-w}
	\orcid{0000-0003-4933-4645}}{H.~W.~Levinson}
\emsauthor{2}{
	\givenname{Isaac}
	\surname{Viviano}
	\zblid{viviano.isaac}
	\orcid{0009-0003-2584-1586}}{I.~Viviano}

\Emsaffil{1}{
	\department{Department of Computer Science}
	\organisation{Oberlin College}
	\rorid{030erj688}
	\address{38 E. College St.}
	\zip{44074}
	\city{Oberlin, OH}
	\country{USA}
	\affemail{hlevinso@oberlin.edu}}
\Emsaffil{2}{
	\department{1}{Department of Mathematics}
	\organisation{1}{University of Wisconsin -- Madison}
	\rorid{1}{01y2jtd41}
	\address{1}{Van Vleck Hall, 213, 480 Lincoln Dr}
	\zip{1}{53706}
	\city{1}{Madison, WI}
	\country{1}{USA}
	\affemail{2}{iviviano@wisc.edu}}

\classification[65T50, 52C07, 11Y40, 15A29]{90C10}

\keywords{Lattice basis reduction, short vectors, DFT, inverse problems}

\begin{abstract}
We analyze lattice-based algorithms for recovering integer-valued signals from partial discrete Fourier transform (DFT) measurements. These algorithms formulate signal recovery as the problem of finding short vectors in an appropriately constructed lattice.  We derive parameter estimates that guarantee successful recovery and quantify how these estimates depend on the signal length, the error of an initial guess, and the number of sampled DFT coefficients. The analysis characterizes the stability of the inversion algorithms, as the lattice parameters are closely related to the required measurement precision. Numerical experiments demonstrate close agreement between the theoretical predictions and observed recovery thresholds over a broad range of problem parameters.
\end{abstract}

\maketitle
\tableofcontents


\NiceMatrixOptions{xdots/horizontal-labels}

\ExplSyntaxOn
\keys_define:nn { nicematrix / Hbrace } { line-style .code:n = }
\ExplSyntaxOff

\tikzset{
  nicematrix/brace/.style = {
    decoration = {calligraphic brace, amplitude=0.4em, raise=0},
    line width = 0.1em, decorate},
  brace1/.style = {
    decoration = {calligraphic brace, amplitude=0.18em, raise=0em},
    line width = 0.09em, decorate},
}

\section{Introduction}
Many inverse problems admit infinitely many solutions or are highly sensitive to measurement error when only limited data are available.
Prior information is therefore essential for obtaining meaningful reconstructions.
Much of existing inverse problem theory focuses on exploiting continuous models of signal structure,
including sparsity and smoothness.
Alternative approaches exploit discrete structural information,
such as integer-valued constraints,
leading to fundamentally different reconstruction methods and theoretical guarantees.

Here,
we consider the reconstruction of an unknown integer-valued signal from incomplete discrete Fourier transform (DFT) measurements.
Without the integer-valued signal constraint,
this inverse problem is highly underdetermined.
Our recent work \cite{levinson2025recovery} proved that a small,
carefully chosen subset of DFT coefficients uniquely determines any integer-valued signal,
and introduced a corresponding reconstruction framework based on a sequence of manageable integer linear programs (ILPs).
That work further showed that embedding the inverse problem into an integer lattice makes the reconstruction becomes substantially more efficient.
Our implementation designs a lattice where the desired subproblem information corresponds to a short lattice vector,
which we recover using a lattice reduction algorithm,
such as Lenstra-Lenstra-Lov\'asz (LLL).  Computational studies demonstrated drastic performance speedups of this lattice formulation over standard ILP-based approaches,
and it recovered much larger one- and two-dimensional signals from highly incomplete DFT data.

The success of this algorithm raises a deeper mathematical question.
Under what conditions will lattice reduction recover the desired solution,
and how should the lattice be optimally designed?  There are many remaining questions about the geometry underlying the lattice construction.
In particular,
the lattice construction depends on several scaling parameters that control the relative lengths of competing lattice vectors.
Appropriate parameter choices are essential for successful recovery,
yet existing implementations rely largely on empirical tuning.

The main contribution of this paper is a theoretical analysis of the lattice constructions introduced in \cite{levinson2025recovery}.
Our analysis combines geometric properties of the lattice with probabilistic estimates for competing short vectors,
providing a mathematical explanation for the empirical success of lattice reduction approach.
From this analysis,
we derive explicit estimates for the lattice parameters required for successful recovery and characterize how these estimates depend on key problem parameters,
including the length of the signal,
the number of Fourier measurements,
the integer-range of the signal,
and the quality of an initial guess.
Extensive numerical experiments demonstrate that the estimates accurately predict observed algorithm behavior,
including parameter recovery thresholds,
the influence of measurement precision,
and the effect of approximate reduction with LLL.
Together,
these results provide both practical parameter selection guidance and a mathematical framework for understanding the geometry of lattices arising from  incomplete DFT measurement inversion algorithms.

\subsection{Related Works}
The present paper provides a theoretical analysis of the lattice-based reconstruction methods introduced in our previous work~\cite{levinson2025recovery}.
The underlying uniqueness and reconstruction problems have received considerable attention.
Earlier works~\cite{pei2022binary1D,pei2023binary} established uniqueness results for recovering binary signals from partial DFT measurements,
which \cite{levinson2025recovery} generalized to integer signals of any dimension.
Our previous works~\cite{levinson2021,levinson2023} studied the related binary reconstruction problem in the bandlimited setting.
Additional work considered binary reconstruction from incomplete Fourier data~\cite{nashold2002synthesis,nashold1989synthesis,mao2012reconstruction},
discrete-valued signal models~\cite{vetterli2002sampling},
and algebraic approaches to Fourier inversion and uncertainty principles~\cite{stolk2010algebraic,tao_2005_1}. 

Following the development of the LLL algorithm,
lattice basis reduction has frequently been used to reformulate and solve integer feasibility and optimization problems.
Beyond its foundational role in cryptography and computational algebra and number theory~\cite{Simon2010,Lenstra1982,Coppersmith1996},
LLL has been applied to integer programming and combinatorial optimization~\cite{Lenstra1983,Aardal2000}.
Of particular relevance is the lattice basis formulation in \cite{Aardal2000},
which finds the integer solutions to an equality-constrained linear system after reduction with LLL.
Subsequent work analyzed related nullspace,
range-space,
and extended formulations,
showing that lattice basis reduction can produce reformulations whose geometry is substantially more favorable for a subsequent search~\cite{aardal2010lattice,aardal2023study,aardal2004hard,krishnamoorthy2009column}.
These reformulations have been especially effective for structured equality knapsack and integer feasibility instances,
including classical examples that are intractable for conventional branch-and-bound.

This line of work relates closely to our approach as both reformulate an integer feasibility problem as a lattice and exploit the geometry of the reduced basis.
There are,
however,
several important distinctions.
First,
the integer programming formulations above begin with systems of integer-valued coefficients,
allowing their analysis to leverage arithmetic properties of integer matrices and their nullspaces.
In the present setting,
the constraints arise from partial Fourier measurements and therefore involve irrational complex roots of unity.
The resulting lattice geometry depends on the algebraic structure and numerical precision of the DFT measurements.
Second,
the classical reformulations use lattice reduction as a preprocessing step to facilitate a subsequent search over the feasible set, as there is no unique solution unless the integer entries are bounded.  
In contrast,
we construct a lattice where appropriate parameter choices guarantee that the unique solution vector is sufficiently short to appear in the reduced basis,
so lattice reduction is itself the recovery algorithm.
Finally,
our basis explicitly incorporates an initial guess for the desired solution and the analysis quantifies how the error of this guess affects recovery success.
To our knowledge,
no previous implementations or analyses of lattice reformulations for integer feasibility problems consider such a guess.

A second perspective on our work comes from the theory of integer relations.
Given real or complex coefficients $z_1,\ldots,z_d$,
an integer relation is a nonzero integer vector ${\bf a}$ satisfying
\begin{equation*}
    a_1z_1+\cdots+a_dz_d=0.
\end{equation*}
LLL was among the first polynomial-time algorithms applicable to this problem,
and later algorithms such as HJLS and PSLQ were developed specifically for integer-relation detection~\cite{Lenstra1982,hastad1989,ferguson1998polynomial}.
These methods have been extensively analyzed, 
relating the size of the smallest relation to the required arithmetic precision and the success of the algorithm~\cite{ferguson1999analysis,feng2019pslq}.
Our parameter analysis similarly studies the interaction between lattice scaling and finite-precision measurements,
but exploits the algebraic structure of roots of unity and an initial guess that are specific to the Fourier inversion problem.

This work considers two divide-and-conquer algorithms for solving the partial data DFT inverse problem.
The algorithm presented in our previous work~\cite{levinson2025recovery},
\cref{alg:memo_1D},
solves a system of simultaneous Fourier constraints over the integers.
This work also proposes a new approach in \cref{alg:freq},
which reformulates each subproblem as a single integer relation with cyclotomic integer coefficients. 
The analysis and numerical results suggest regimes where each algorithm is preferable.

\subsection{Outline and Notation}
\label{sec:outline}
We begin in \cref{sec:background} by describing the reconstruction problem,
including the established uniqueness results and the general algorithmic framework.
We also present a new integer-relation based algorithm (\cref{alg:freq}),
as an alternative to the approach introduced in our earlier work (\cref{alg:memo_1D}).
Although both algorithms are discussed and evaluated throughout the paper,
the theoretical development focuses primarily on  \cref{alg:memo_1D}.
This case encompasses the essential ideas of the analysis,
while extending the theory to the formulation of \cref{alg:freq} primarily involves additional technical details without substantially changing the resulting estimates.
Thus,
the detailed analysis of \cref{alg:memo_1D} also provides an accurate approximation for \cref{alg:freq}.

In \cref{sec:lattice}, 
we present the lattice formulation, 
and examine the geometry of the lattice construction and the role of scaling parameters.  
This motivates the theoretical analysis developed in \cref{sec:theory}, 
which derives estimates for parameter values required for successful recovery.   
Since these estimates depend on the unknown error in the initial guess, 
\cref{sec:K} develops probabilistic models for this error, 
allowing the theoretical estimates to be used for practical parameter selection. 
In \cref{sec:verify}, 
we compare the resulting predictions with extensive numerical experiments, 
demonstrating close agreement between theory and data.     
Finally, 
\cref{sec:full}
applies the results to full one- and two-dimensional reconstruction problems. \\

Throughout the paper,
vectors are denoted by boldface lowercase letters (e.g., ${\bf x}$),
with entries $x_n$,
and matrices by typewriter-style uppercase letters (e.g., ${\tt X}$),
with entries $X_{mn}$.
We denote the $n\times n$ identity matrix by $\idmat{n}$.
The sets of real numbers,
complex numbers,
and integers are denoted by $\R$, $\C$, and $\Z$,
respectively.
We write $\Re(z)$ and $\Im(z)$ for the real and imaginary parts of $z\in\C$,
and denote the complex conjugate by $z^*$.
The complex unit is denoted by $\cu = \sqrt{-1}$.
For a vector ${\bf x}$,
the Euclidean norm is given by
$\norm{{\bf x}}=\sqrt{\sum_n \abs{x_n}^2}$.
We let $\#S$ be the cardinality of a set $S$.

For integers $m$ and $n$,
$\gcd(m,n)$ denotes their greatest common divisor
and $\lcm(m,n)$ their least common multiple.
We write $m\divs n$ if $m$ divides $n$,
otherwise $m\ndivs n$.
The cyclic group of order $n$ is denoted $\Z_n$,
$\langle k\rangle$ gives the cyclic subgroup generated by $k$,
and $\Z[\alpha]$ is the ring generated by $\alpha$ over $\Z$.
We also write $\tau(n)$ for the number of positive divisors of an integer $n$ and $\phi(n)$ for Euler's totient function,
which counts the number of positive integers less than $n$ and relatively prime with $n$.
Finally,
we define
$\uroot{n}{}=\exp\left(-2\pi i/n\right)$,
as a primitive $n$th root of unity.

\section{Problem Background and Previous Work} \label{sec:background} 

For a fixed dimension $d$,
let $0 < N_1, \dots, N_d \in \Z$, and define
$N = N_1 \cdots N_d$ to be the product of all the $N_j$.
Let
${\tt X} \in \C^{N_1 \times \dots \times N_d}$ be a $d$-dimensional signal.
The discrete Fourier transform (DFT) of ${\tt X}$ is defined by,
\begin{equation} \label{eq:dft}
    \tilde{X}_{\bf k} = 
    \sum_{n_1 = 0}^{N_1 - 1}\cdots \sum_{n_d = 0}^{N_d - 1} X_{n_1\cdots n_d} \uroot{N_1}{k_1n_1} \cdots \uroot{N_d}{k_dn_d}, 
\end{equation}
where ${\bf k}\in\Z^d$ is a multi-index frequency defined by
\begin{equation*}
     {\bf k} = (k_1, \dots, k_d),
    \text{ where } 0\le k_j < N_j 
    \text{ for each } 1 \le j \le d . 
\end{equation*}
The entries $\tilde{X}_{\bf k}$ of the DFT $\tilde{\tt X}$ are called the discrete Fourier coefficients (or DFT coefficients) of ${\tt X}$.
The DFT is an orthogonal linear transformation,
which implies that ${\tt X}$ can be recovered entrywise from,
\begin{equation} \label{eq:idft}
    X_{n_1 \cdots n_d} = 
    \frac{1}{N}\sum_{k_1 = 0}^{N_1 - 1}\cdots \sum_{k_d = 0}^{N_d - 1} \tilde{X}_{k_1,\dots,k_d} \uroot{N_1}{-k_1n_1} \cdots \uroot{N_d}{-k_dn_d}, 
    \qquad
    \text{for any } 0\le n_j < N_j,
\end{equation}
provided the full set of DFT coefficients is known.
While naive evaluations of \cref{eq:dft,eq:idft} take $\bigo{N^2}$ time,
the fast Fourier transform (FFT) algorithm computes each in $\bigo{N\log N}$ time~\cite{Cooley1965}.

A signal ${\tt X}$ is real,
if and only if
its DFT coefficients satisfy the conjugate symmetry condition,
\begin{equation} \label{eq:conj_symmetry}
    \tilde{X}_{\bf k} = 
    \tilde{X}_{-\bf k}^*,
    \qquad
    \text{for all } {\bf k},
\end{equation}
where the notation $-{\bf k}$ indicates taking entrywise additive inverses modulo each $N_j$.
For the integer-valued signals considered in this work are real,
\cref{eq:conj_symmetry} always holds.
As a consequence,
only about $N/2$ frequencies are required to invert ${\tt X}$ from \cref{eq:idft}.
In \cite{levinson2025recovery},
we show that the integer-valued constraint actually provides a much stronger guarantee on the number of coefficients required to uniquely determine ${\tt X}$.
This uniqueness result is stated in \cref{sec:sample_theory}, while \cref{sec:algs} describes two methods for efficiently recovering ${\tt X}$ 
from a partial sampling. 

\subsection{Uniqueness Results} \label{sec:sample_theory}
Let ${\tt X}\in\Z^{N_1\times \dots \times N_d}$ be an integer signal. 
For any DFT frequency
${\bf k} = (k_1,\dots,k_d)$,
define $D_j = N_j/\gcd(k_j,N_j)$ and $D=\lcm(D_1,D_2,\dots,D_d)$. 
With this setup, 
we can characterize when two DFT coefficients $\tilde{X}_{{\bf k}}$ and $\tilde{X}_{{\bf k}'}$ are redudant and contain the same information.  
\begin{lemma}[{\cite[\getcrefname{theorem}~3.3]{levinson2025recovery}}] \label{lem:uniq}
    For any frequency ${\bf k}$,
    two integer signals ${\tt X}$ and ${\tt Y}$ satisfy $\tilde{X}_{\bf k} = \tilde{Y}_{\bf k}$ if and only if $\tilde{X}_{{\bf k}'} = \tilde{Y}_{{\bf k}'}$ for every frequency ${\bf k}'$ such that there exists an integer $\lambda$ with $\gcd(\lambda,D)=1$ and
    \begin{equation}
    \label{eq:equivalence}
        k_j' \equiv \lambda k_j \pmod{N_j} \quad \text{for all } j = 1,\dots,d.
    \end{equation}
\end{lemma}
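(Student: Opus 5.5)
The plan is to reduce the statement to a question about a single element of a cyclotomic field and then apply Galois theory. Set ${\tt Z} = {\tt X} - {\tt Y}$, an integer signal. By linearity of \cref{eq:dft}, $\tilde{X}_{\bf k} = \tilde{Y}_{\bf k}$ holds exactly when $\tilde{Z}_{\bf k} = 0$. It therefore suffices to show the following: for an integer signal ${\tt Z}$, we have $\tilde{Z}_{\bf k} = 0$ if and only if $\tilde{Z}_{{\bf k}'} = 0$ for every ${\bf k}'$ satisfying \cref{eq:equivalence}. The reverse direction is immediate, since ${\bf k}' = {\bf k}$ corresponds to $\lambda = 1$. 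Only the forward direction needs work.

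The first step is to locate $\tilde{Z}_{\bf k}$ in the right field. Each factor $\uroot{N_j}{k_j n_j}$ equals $\exp(-2\pi \cu\, (k_j/\gcd(k_j,N_j))\, n_j / D_j)$, which is a $D_j$th root of unity, and hence a power of $\uroot{D}{}$ because $D_j \divs D$. Consequently
\begin{equation*}
\tilde{Z}_{\bf k} = p(\uroot{D}{}) \quad\text{for some polynomial } p \in \Z[t],
\end{equation*}
so $\tilde{Z}_{\bf k}$ lies in $\Z[\uroot{D}{}] \subset \mathbb{Q}(\uroot{D}{})$. More precisely, write $k_j = g_j a_j$ with $g_j = \gcd(k_j,N_j)$ and $\gcd(a_j, D_j) = 1$. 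Then $\uroot{N_j}{k_j n_j} = \uroot{D}{(D/D_j) a_j n_j}$, and the exponent of $\uroot{D}{}$ attached to each multi-index $(n_1,\dots,n_d)$ is $e(n) = \sum_j (D/D_j) a_j n_j \bmod D$.

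The second step is to apply a field automorphism. Take $\lambda$ with $\gcd(\lambda,D)=1$, and let $\sigma_\lambda$ be the automorphism of $\mathbb{Q}(\uroot{D}{})$ determined by $\uroot{D}{} \mapsto \uroot{D}{\lambda}$. Since $\sigma_\lambda$ fixes integers, it sends $p(\uroot{D}{})$ to $p(\uroot{D}{\lambda}) = \sum_n Z_n \uroot{D}{\lambda e(n)}$. I will check that this equals $\tilde{Z}_{{\bf k}'}$ whenever $k_j' \equiv \lambda k_j \pmod{N_j}$. The reason is that $\uroot{N_j}{k_j' n_j}$ depends only on $k_j' \bmod N_j$, so
\begin{equation*}
\uroot{N_j}{k_j' n_j} = \uroot{N_j}{\lambda k_j n_j} = \uroot{D}{\lambda (D/D_j) a_j n_j}.
\end{equation*}
It follows that $\tilde{Z}_{{\bf k}'} = \sigma_\lambda(\tilde{Z}_{\bf k})$. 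Because $\sigma_\lambda$ is injective, $\tilde{Z}_{\bf k} = 0$ forces $\tilde{Z}_{{\bf k}'} = 0$, which gives the forward direction.

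The main obstacle is a bookkeeping point rather than a conceptual one: $\lambda$ is only assumed coprime to $D$, not to the individual $N_j$. The argument must therefore use only the fact that the relevant roots are $D$th roots of unity. The key facts to verify are that $\sigma_\lambda$ is a genuine automorphism, which needs only $\gcd(\lambda, D) = 1$ via the irreducibility of the $D$th cyclotomic polynomial (a standard fact), and that reducing the exponent $\lambda k_j n_j$ modulo $N_j$ is compatible with reducing it modulo $D$ after rescaling. This compatibility holds because $\uroot{N_j}{k_j n_j}$ is genuinely a $D_j$th root of unity, so reductions modulo $N_j$ and modulo $D$ give the same root.
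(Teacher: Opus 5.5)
Your proposal is correct and is essentially the argument the paper sketches right after the lemma: write $\tilde{X}_{\bf k}$ as an integer polynomial in $\eta_D$ and recognize $\tilde{X}_{{\bf k}'}$ as its image under the Galois automorphism $\eta_D \mapsto \eta_D^{\lambda}$. Your version is somewhat more careful than that sketch. You pass to ${\tt X}-{\tt Y}$, invoke injectivity of $\sigma_\lambda$, and apply $\sigma_\lambda$ to the unreduced polynomial rather than to the power-basis coordinates $y_n$, which avoids checking that reduction modulo the cyclotomic polynomial commutes with $\sigma_\lambda$. The route is nonetheless the same.
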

The condition in \cref{eq:equivalence} is equivalent to the frequencies ${\bf k}$ and ${\bf k'}$ generating the same cyclic subgroup of $\Z_{N_1}\times\dots\times\Z_{N_d}$. 
The degeneracy in \cref{lem:uniq} arises because the corresponding DFT coefficients of ${\tt X}$ are Galois conjugates.
To see this,
note that each DFT coefficient $\tilde{X}_{\bf k}$ is an element of $\Z[\uroot{D}{}]$,
\begin{equation*}
    \tilde{X}_{\bf k} = 
    \sum_{n_1 = 0}^{N_1 - 1}\cdots \sum_{n_d = 0}^{N_d - 1} {\tt X}_{n_1\cdots n_d} \uroot{N_1}{k_1n_1} \cdots \uroot{N_d}{k_dn_d}
    = \sum_{n_1 = 0}^{N_1 - 1}\cdots \sum_{n_d = 0}^{N_d - 1} {\tt X}_{n_1\cdots n_d} \uroot{D}{(k_1n_1D/N_d + \dots + k_dn_dD/N_d)}.
\end{equation*}
Using the standard basis of $\Z\left[\uroot{D}{}\right]= \operatorname{span}\set{1,\uroot{D}{},\uroot{D}{2},\dots,\uroot{D}{\phi(D)-1}}$,
we can uniquely express $\tilde{X}_{\bf k}$ as the integer linear combination
\begin{equation} \label{eq:freq_sub}
    \tilde{X}_{\bf k} 
    = \sum_{n = 0}^{\phi(D) - 1} y_n \uroot{D}{n},
\end{equation}
where the coefficients $y_n$ of the cyclotomic integer depend on the integer signal ${\tt X}$ and linear relations between roots of unity determined by the $D$th cyclotomic polynomial. 
Now suppose ${\bf k}'=\lambda{\bf k}$, where $\gcd(\lambda,D)=1$. Then
\begin{equation} \label{eq:galois}
    \tilde{X}_{{\bf k}'}
    = \tilde{X}_{\lambda \bf k} 
    = \sum_{n = 0}^{\phi(D) - 1} y_n \uroot{D}{\lambda n},
\end{equation}
which is precisely the Galois conjugate of $\tilde{X}{\bf k}$ corresponding to the automorphism $\uroot{D}{}\mapsto\uroot{D}{\lambda}$.  Therefore, whenever $\langle {\bf k}\rangle = \langle {\bf k}'\rangle$, the coefficient  

\Cref{lem:uniq} establishes that sampling a generator of each cyclic subgroup is sufficient to guarantee unique recovery of any element in $\Z^{N_1\times\cdots\times N_d}$.  
A slight extension of 
\cite[\getcrefname{theorem}~3.8]{levinson2025recovery}
shows that this sampling condition is also necessary to uniquely recover any integer signal,
as stated in the following lemma.
\begin{lemma}
\label{lem:min}
    Fix $\mathbf{k}$.
    For any ${\tt X}\in\Z^{N_1\times\cdots\times N_d}$, there exists a ${\tt Y}\in\Z^{N_1\times\cdots\times N_d}$ such that $\tilde{X}_{\mathbf{k'}}\neq\tilde{Y}_{\mathbf{k'}}$ if and only if $\langle\mathbf{k'}\rangle = \langle\mathbf{k}\rangle$.
\end{lemma}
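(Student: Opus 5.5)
The plan is to build ${\tt Y}$ by adding to ${\tt X}$ an integer perturbation ${\tt Z}$ whose DFT is supported exactly on the set
\begin{equation*}
G_{\bf k}=\set{{\bf k}' \given \langle {\bf k}'\rangle=\langle{\bf k}\rangle}
\end{equation*}
and is nonzero at every point of that set. Then ${\tt Y}={\tt X}+{\tt Z}$ is integer-valued. By linearity of the DFT, $\tilde Y_{{\bf k}'}-\tilde X_{{\bf k}'}=\tilde Z_{{\bf k}'}$ is nonzero precisely when ${\bf k}'\in G_{\bf k}$, which is the claimed equivalence. Since the construction of ${\tt Z}$ does not depend on ${\tt X}$, the statement holds for every ${\tt X}$.

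\textbf{Step 1: parametrize $G_{\bf k}$.} By the discussion following \cref{lem:uniq}, the generators of $\langle{\bf k}\rangle$ are exactly the frequencies $\lambda{\bf k}$ (entrywise mod $N_j$) with $\gcd(\lambda,D)=1$. The cyclic subgroup $\langle{\bf k}\rangle$ has order $D$, so $\lambda\mapsto\lambda{\bf k}$ is injective on $\Z_D$. Hence $G_{\bf k}$ is in bijection with $(\Z/D\Z)^\times$ and has $\phi(D)$ elements.

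\textbf{Step 2: define ${\tt Z}$.} Set $\tilde Z_{{\bf k}'}=N$ for ${\bf k}'\in G_{\bf k}$ and $\tilde Z_{{\bf k}'}=0$ otherwise, and define ${\tt Z}$ by \cref{eq:idft}. The DFT is invertible, so ${\tt Z}$ has exactly this spectrum. By \cref{eq:idft},
\begin{equation*}
Z_{n_1\cdots n_d}=\sum_{\lambda\in(\Z/D\Z)^\times}\uroot{D}{-\lambda m({\bf n})},
\qquad m({\bf n})=\sum_{j} k_jn_jD/N_j\in\Z .
\end{equation*}
Here $m({\bf n})$ is an integer because $N_j/\gcd(k_j,N_j)$ divides $D$.

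\textbf{Step 3: show ${\tt Z}$ is integer-valued.} This is the main point. The right-hand side above is the Ramanujan sum $c_D(m({\bf n}))$, i.e.\ the sum of the $m({\bf n})$-th powers of all primitive $D$th roots of unity. I would prove integrality in one of two ways:
\begin{itemize}
\item Galois-theoretically: the sum is invariant under every automorphism $\uroot{D}{}\mapsto\uroot{D}{\mu}$ with $\gcd(\mu,D)=1$, since these automorphisms permute $(\Z/D\Z)^\times$. So it is a rational algebraic integer, hence lies in $\Z$.
\item Via the classical formula $c_D(m)=\sum_{e\mid\gcd(D,m)}\mu(D/e)\,e$, obtained by Möbius inversion of the full geometric sums over all $D$th roots of unity, each of which equals $D$ or $0$.
\end{itemize}
The scaling by $N$ in Step 2 is exactly what cancels the $1/N$ in \cref{eq:idft}. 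A residual worry is whether the exponent bookkeeping with $D/N_j$ matches the convention in \cref{eq:dft}; I expect this to be routine.

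Together, Steps 1–3 give an integer signal ${\tt Z}$ whose DFT is $N$ on $G_{\bf k}$ and $0$ elsewhere, and ${\tt Y}={\tt X}+{\tt Z}$ then satisfies the lemma.
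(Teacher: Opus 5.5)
Your proof is correct and complete. The perturbation ${\tt Z}$ with spectrum equal to $N$ on $G_{\bf k}$ and $0$ elsewhere has entries $Z_{\bf n}=\sum_{\lambda\in(\Z/D\Z)^\times}\uroot{D}{-\lambda m({\bf n})}$, which is a Ramanujan sum and hence an integer by either of your arguments. The Galois version implicitly uses irreducibility of the $D$th cyclotomic polynomial; the M\"obius formula avoids it. Your worry about the exponent bookkeeping is harmless: $k_jD/N_j\in\Z$, and the sign convention is irrelevant because $\lambda\mapsto-\lambda$ permutes the units mod $D$. The degenerate case ${\bf k}={\bf 0}$ ($D=1$) also works, since it just gives the all-ones signal.

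The paper gives no argument for this lemma in the text. It presents it as a slight extension of Theorem~3.8 of \cite{levinson2025recovery}. Presumably that means: take the earlier necessity result, which provides a different integer signal agreeing with ${\tt X}$ at every frequency outside $G_{\bf k}$. Then use \cref{lem:uniq}, i.e.\ the fact from \cref{eq:galois} that DFT values at generators of the same cyclic subgroup are Galois conjugates, to upgrade ``differs at some generator'' to ``differs at every generator.''

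Your route does both halves at once. Nonvanishing on all of $G_{\bf k}$ is built into the prescribed spectrum rather than deduced from \cref{lem:uniq}. A single ${\tt Z}$ works uniformly for every ${\tt X}$. Your Galois integrality argument is the same conjugation mechanism as in \cref{eq:galois}, here used to prove integrality rather than redundancy. The citation route buys brevity and continuity with the earlier work. Yours buys a self-contained, constructive proof with an explicit formula and the bound $\abs{Z_{\bf n}}\le\phi(D)$. Like the lemma itself, it says nothing about preserving range constraints such as binarity.
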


\subsection{Reconstruction Methods} \label{sec:algs}

We now describe our algorithms for recovering one-dimensional integer signals. 
As shown in \cite{levinson2025recovery},
a higher-dimensional problem may be reduced to a carefully chosen sequence of one-dimensional inversions.
The reduction decomposition is closely related to approaches used in the discrete Radon transform \cite{gertner2002new, hsung1996discrete,kingston2007generalised}, 
as well as in splitting signals \cite{grigoryan-superposition,grigoryan-book}.
We note that optimal implementations of higher-dimensional inversion require careful organization of the subproblems,
and are more nuanced than a direct reduction to one dimension.
However,
the structure of each subproblem in the higher-dimensional setting is still a one-dimensional inversion problem.
Our later analysis of the reconstruction methods applies to the individual subproblems, 
so the results in the one-dimensional setting immediately extend to higher dimensions.

The cyclic subgroups of $\Z_{N}$ are precisely the subgroups $\langle d \rangle \cong \Z_{N'}$, 
for each pair $dN' = N$. 
Denote the divisors of $N$ by $1 = N_{1}' < \dots < N_{\tau}' = N$,
with $d_j = N/N_j'$.
For an $N$-point integer signal ${\bf x}$,
\cref{lem:uniq} implies that
the linear system containing constraints for each sampled DFT coefficient,
\begin{equation} \label{eq:ip_form}
    \begin{NiceArray}{[ccc]c[c]c}
        \uroot{N}{0\cdot d_1} & \cdots & \uroot{N}{(N-1)d_1} & \Block{3-1}{{\bf x} = } & \tilde{x}_{d_1} & \Block{3-1}{, \qquad {\bf x} \in \Z^N,} \\
        \vdots & \ddots & \vdots & & \vdots \\
        \uroot{N}{0\cdot d_{\tau}} & \cdots & \uroot{N}{(N-1)d_{\tau}} & & \tilde{x}_{d_\tau}
    \end{NiceArray}
\end{equation}
has a unique solution over the integers.
We introduce an equivalent notation for the system in \cref{eq:ip_form} which will be used throughout the section,
\begin{equation} \label{eq:ip_notation}
    \begin{bmatrix}
        \uroot{N}{0\cdot d} & \cdots & \uroot{N}{(N-1)d}
    \end{bmatrix}_{d \divs N}
    {\bf x} =  
    \begin{bmatrix}
        \tilde{x}_{d}
    \end{bmatrix}_{d \divs N},
    \qquad {\bf x} \in \Z^N.
\end{equation}
\Cref{eq:ip_notation} indexes the linear constraints based on the divisor structure of $N$,
concatenating them into a matrix linear system.
While the uniqueness theory guarantees a unique solution to \cref{eq:ip_form},
standard approaches for solving this system, 
such as integer linear programming,
reduce to instances of NP-hard problems.  This is true 
even in the simplest case when ${\bf x}$ is known to be binary~\cite{karp1975computational}.

We consider two approaches for solving \cref{eq:ip_form} more tractably by dividing the linear system into smaller subproblems. 
First,
we identify a smaller subproblem associated with each cyclic subgroup,
solving for the coefficients $y_n$ in \cref{eq:freq_sub} and recovering all generator frequencies with \cref{eq:galois}.
For a one-dimensional integer signal ${\bf x}$ with $dN' = N$,
substituting $k = d$ into \cref{eq:freq_sub} yields,
\begin{equation} \label{eq:freq_sub_1d}
    \tilde{x}_{d} 
    = \sum_{n = 0}^{\phi(N') - 1} y_n \uroot{N'}{n},
\end{equation}
where the
coefficient vector ${\bf y} \in \Z^{\phi(N')}$ must be unique. 
We can then easily compute the remaining DFT coefficients $\tilde{x}_{kd}$ for all $k$ satisfying $\gcd(k,N')=1$.
As in \cref{eq:galois},
these are the conjugates of the cyclotomic integer $\tilde{x}_{d}$,
\begin{equation}
\label{eq:freq_conj}
    \tilde{x}_{kd}
    = \sum_{n = 0}^{\phi(N') - 1} y_n \uroot{N'}{nk},
    \qquad \text{for } \gcd(k, N') = 1.
\end{equation}
This prescribes a divide-and-conquer approach that is summarized in \cref{alg:freq}. 
For each divisor $d$ of $N$, 
given $\tilde{x}_{d}$ we solve the integer linear program in \cref{eq:freq_sub},
which is an integer relation problem with coefficients in $\C$.
Then using \cref{eq:freq_conj}, 
we recover $\tilde{x}_k$ for each $\gcd(k, N) = d$
which can be achieved efficiently through an $N'$-point FFT.
After solving all $\tau(N)$ subproblems (one for each divisor),
we will have recovered all DFT coefficients and can recover ${\bf x}$ using an inverse DFT.  

\begin{algorithm}[htbp]

\caption{Independent 1D Inversion}
\label{alg:freq}

\begin{algorithmic}[1]
    \Require DFT coefficients $\tilde{x}_d$ for each divisor $d$ of $N$
    \ForAll{$dN' = N$}
        \State Solve the ILP,
        $\displaystyle\sum_{n=0}^{\phi(N') - 1} y_n \uroot{N'}{n} = \tilde{x}_{d}, \qquad {\bf y} \in \Z^{\phi(N')}$
        \ForAll{$0 \le k < N' : \gcd(k, N') = 1$}
            \State $\tilde{x}_{dk} \gets \displaystyle\sum_{n=0}^{\phi(N') - 1} y_n \uroot{N'}{nk}$
        \EndFor
    \EndFor
    \State \Return ${\bf x}$ 
    \Comment{Computed via inverse DFT on recovered $\tilde{x}_k$}
\end{algorithmic}

\end{algorithm}

\Cref{alg:freq} divides the solution of \cref{eq:ip_form} into completely independent subproblems. 
We now present a second approach which uses the same number of subproblems,
but also leverages relationships between subproblems to obtain additional constraints. 
This is the algorithm proposed in our previous work~\cite[\getcrefname{algorithm}~4.2]{levinson2025recovery}.
Instead of solving for the cyclotomic integer conjugates,
the intermediate subproblems recover aliased signals of the following form.
\begin{definition} \label{def:dec}
    Let ${\bf x}$ be a signal of length $N$.
    If $dN' = N$, 
    then the frequency decimated signal ${\bf x}^{(N')}$ (equivalently ${\bf x}^{(N/d)}$) is defined by,
    \begin{equation*}
        {\bf x}^{(N')} \coloneqq
        \begin{bmatrix}
        \idmat{N'} & \cdots & \idmat{N'}
        \end{bmatrix} {\bf x} 
        \qquad
        \iff
        \qquad
        \tilde{x}^{(N')}_k = \tilde{x}_{dk},
        \quad
        \text{ for all } 0\le k <N' .
    \end{equation*}
\end{definition}
\noindent
These frequency-decimated signals are equivalently given entry-wise by,
\begin{equation*}
    x^{(N')}_m = \sum_{n = 0}^{d - 1}x_{m + nN'},
    \qquad \text{for}~0\le m < N',
\end{equation*}
with the edge cases ${\bf x}^{(N)} = {\bf x}$ and ${\bf x}^{(1)} = \sum_{n = 0}^{N-1}x_n=\tilde{x}_0$~\cite[\getcrefname{lemma}~2.5]{levinson2025recovery}.

Each ${\bf x}^{(N')}$ is also an integer signal
which may be recovered from an analogous system to \cref{eq:ip_form}, 
where we only keep the rows that correspond to divisors $N''$ of $N'$,
\begin{equation} \label{eq:ip_form_subprob}
\begin{bmatrix}
\uroot{N'}{0 \cdot d} & \cdots & \uroot{N'}{(N'-1)\cdot d}
\end{bmatrix}_{ dN'' = N'}
\mathbf{x}^{(N')}
=
\begin{bmatrix}
\tilde{x}_{d}^{(N')}
\end{bmatrix}_{ dN'' = N'},
\qquad {\bf x}^{(N')} \in \Z^{N'} .
\end{equation}
Note that the DFT coefficients showing up in the RHS of \cref{eq:ip_form_subprob} are directly measured,
as \cref{def:dec} implies,
$\tilde{x}_{d}^{(N')} = \tilde{x}_{(N/N')d}$.
To improve computational efficiency,
we observe that if $dN'' = N'$, $d_1N' = N$ and $d_2N'' = N$,
we can write
\begin{equation*}
    \tilde{x}_{k}^{(N'')}
    = \tilde{x}_{d_2k}
    = \tilde{x}_{(N/N'')k}
    = \tilde{x}_{(N/N')k/(N''/N')}
    = \tilde{x}_{(d_1k/(1/d))}
    = \tilde{x}_{d_1(dk)}
    = \tilde{x}_{dk}^{(N')},
\end{equation*}
which implies from \cref{def:dec} that 
\begin{equation} \label{eq:subprob_dec}
    \begin{bmatrix}
        \idmat{N''} & \cdots & \idmat{N''}
    \end{bmatrix} 
    {\bf x}^{(N')}
    = {\bf x}^{(N'')} . 
\end{equation}
Therefore, 
the linear system in \cref{eq:subprob_dec} determines $\tilde{x}_{d}^{(N')}$, and can replace 
the corresponding row in \cref{eq:ip_form_subprob}.

Note that while this substitution is theoretically equivalent over the integers, it is computationally preferred for two main reasons.  First, it replaces a single Fourier equation with a collection of $N''$ linearly independent constraints, 
thereby increasing the rank of the system. 
Second,
the substitutions yield a linear system with integer coefficients. 
Consequently, 
the error of 
any integer vector ${\bf x}^{(N
')}$ that fails to satisfy the system in \cref{eq:subprob_dec} is bounded below by a constant (in any fixed norm).
In contrast, 
integer linear combinations of the complex irrational coefficients in the original Fourier system are dense in $\C$~\cite{myerson1986small,Buhler2000dense,barber2023small}. 
Thus, 
the modified system has better numerical stability, 
as it separates feasible integer solutions from infeasible ones by a uniform gap.  

Motivated by this substitution, our inversion approach solves the $\tau(N)$ subproblems \cref{eq:ip_form_subprob} in order of increasing length of the decimated signals, 
\begin{equation*} 
    {\bf x}^{(N'_1)}, \dots, {\bf x}^{(N'_\tau)}, 
\end{equation*}
which ensures that when solving for ${\bf x}^{(N')}$, we have already solved for ${\bf x}^{(N'')}$ on the right hand side of \cref{eq:subprob_dec} for every $N'' \divs N'$.
Furthermore,
it is actually sufficient to only consider divisors $N'/p$ a prime $p$. 
A proper divisor $N''$ of $N'$ must be divisible by $p$ for some prime $p \divs N'$,
so $\tilde{x}^{(N')}_{N'/N''}$ is specified by the constraints from ${\bf x}^{(N'/p)}$.
Thus,
an equivalent formulation to \cref{eq:ip_form} is
\begin{equation} \label{eq:ip_better}
    \begin{bmatrix}
        \begin{bmatrix}
            \idmat{N''} & \cdots & \idmat{N''} 
        \end{bmatrix}
    _{\substack{
     pN'' \divs N'\\
    p \text{ prime}
    }} \\
        \begin{bmatrix}
            \uroot{N'}{0} & \cdots & \uroot{N'}{N'-1}
        \end{bmatrix}
    \end{bmatrix}
    {\bf x}^{(N')}
    =
    \begin{bmatrix}
    \left( {\bf x}^{(N'')} \right)_{\substack{
    pN'' = N'\\
    p \text{ prime}
    }} \\
    \tilde{x}_{d}
    \end{bmatrix},
    \qquad {\bf x}^{(N')} \in \Z^{N'} ,
\end{equation}
where $d$ is defined by $N = dN'$ so $\tilde{x}_d=\tilde{x}_1^{(N')}$.
\Cref{alg:memo_1D} outlines this inversion method.

\begin{algorithm}[htb]
\caption{1D Inversion}
\label{alg:memo_1D}

\begin{algorithmic}[1]
    \Require DFT coefficients $\tilde{x}_{d}$ for each divisor $d$ of $N$
    \For{$N'\in\Call{Divisors}{N}$} \label{ln:memo1D_loop} \Comment{Includes $N$, iterate in increasing order.}
        \State $\set{p_1, \ldots, p_\omega} \gets \Call{PrimeFactors}{N'}$
        \State ${\bf x}^{(N')} \gets $ the solution to the ILP, \label{ln:memo1D_ip}
        \Statex \makebox{%
            $\begin{NiceArray}{[ccc]c[c]c}
                \idmat{N'/p_1} & \cdots & \idmat{N'/p_1} & \Block{4-1}{{\bf x} = } & {\bf x}^{(N'/p_1)} & \Block{4-1}{, \qquad {\bf x} \in \Z^{N'} .}\\
                \vdots & \ddots & \vdots & & \vdots & \\
                \idmat{N'/p_{\omega}} & \cdots & \idmat{N'/p_{\omega}} & & {\bf x}^{(N'/p_{\omega})} \\
                \cmidrule(lr){1-3}
                \cmidrule(lr){5-5}
                \uroot{N'}{0} & \cdots & \uroot{N'}{(N' - 1)} & & \tilde{x}_{N/N'} \\
            \end{NiceArray}$
        }
    \EndFor
    \State \Return ${\bf x}^{(N)}$ \label{ln:memo1D_ret}
\end{algorithmic}
    
\end{algorithm}

To further motivate the benefits of \cref{alg:freq,alg:memo_1D},
we compare the size of the search space of \cref{eq:ip_form} to the subproblem ILPs in \cref{eq:ip_better,eq:freq_conj}.
We compute the search space by enumerating all integer combinations of the free variables of the linear system~\cite{levinson2025recovery}.
This comes from the nullity of the constraint system of the ILP,
which is $\phi(N/d) - 2$ for both algorithms,
and the bounds on the integer variables.
As these feasibility ILPs do not have an objective function,
we generally expect the performance of branching techniques to scale with this search space size~\cite{levinson2025recovery,MARCHAND2002}.
We consider the inversion of a 30-point signal ${\bf x}$ when we have the {\it a priori} knowledge that the entries are bounded by $0 \le x_n \le 30$.
In this case,
the size of the search space for \cref{eq:ip_form} is  $31^{16} = 7.27 \times 10^{23}$,
where 16 is the nullity of \cref{eq:ip_form} when $N = 30$.
For \cref{alg:freq},
both the subproblems of size $N/d = 15, 30$ had a search space of size $2.93 \times 10^{14}$,
and the other subproblems were much smaller.
For \cref{alg:memo_1D},
the subproblem of size $N' = 30$ had a search space of size $8.88 \times 10^8$,
the subproblem of size $N' = 15$ had a search space of size $5.15 \times 10^{10}$,
and the other subproblems were smaller. 
Thus,
in this example,
both divide-and-conquer approaches significantly reduce the search space relative to the naive approach.
We note that while \cref{alg:freq,alg:memo_1D} solve ILPs of the same dimension,
\cref{alg:freq} generally has larger integer bounds on the ILP variables,
which come from applying the cyclotomic polynomial linear relations to the original integer bounds.

In our initial presentation of the algorithms,
we considered a minimal-data setting in which only the coefficients $\set{\tilde{x}_d \given d \divs N}$ are available. 
However,
the remainder of this work investigates a more general formulation of the inverse problem,
where,
for any $d \divs N$,
we may sample some number $J \ge 1$ of frequencies $k_j$ satisfying $\gcd(k_j, N) = d$ for each $1 \le j \le J$.
Providing additional DFT measurements beyond the single coefficient required for uniqueness improves stability and practicality of reconstruction for larger dimensions. 
Introducing these extra samples extends the subproblem ILP of \cref{alg:memo_1D} for $N' = \frac{N}{d}$ to,
\begin{equation} \label{eq:ip_better_J}
    \begin{NiceArray}{[ccc]c[c]c}
        \idmat{N'/p_1} & \cdots & \idmat{N'/p_1} & \Block{6-1}{{\bf x} = } & {\bf x}^{(N'/p_1)} & \Block{6-1}{, \qquad {\bf x} \in \Z^{N'} .}\\
        \vdots & \ddots & \vdots & & \vdots & \\
        \idmat{N'/p_{\omega}} & \cdots & \idmat{N'/p_{\omega}} & & {\bf x}^{(N'/p_{\omega})} \\
        \cmidrule(lr){1-3}
        \cmidrule(lr){5-5}
        \uroot{N'}{0 \cdot k_1/d} & \cdots & \uroot{N'}{(N' - 1) \cdot k_1/d} & & \tilde{x}_{k_1} \\
        \vdots & \ddots & \vdots  & & \vdots \\
        \uroot{N'}{0 \cdot k_J/d} & \cdots & \uroot{N'}{(N' - 1) \cdot k_J/d} & & \tilde{x}_{k_J} 
    \end{NiceArray} 
\end{equation}

\section{Lattice Setup} \label{sec:lattice}

This work uses lattice methods to solve the ILPs in the subproblems of \cref{alg:freq,alg:memo_1D}.
The lattice approach offers several benefits over directly applying standard branch-and-cut ILP solvers to \cref{eq:ip_better_J}.
ILP techniques for feasibility problems like \cref{eq:ip_better_J} require tight bounds on the integer variables to run quickly.
However,
even if our signal is constrained to be binary,
many of the subproblems of the algorithms will not be binary.
Additionally,
for many two-dimensional cases,
none of the subproblems of a binary image will have binary constraints.
This setup favors the lattice methods,
which do not incorporate any integer bounds and perform relatively agnostic of the integer values.

The lattice formulation also benefits from the incorporation of a (non-integer) guess for the inverted signal, 
as described below.
In contrast, 
we are not aware of how such a guess could improve the performance of an ILP method, 
beyond prescribing an initial point for a breadth-first exhaustive search.
Finally,
while ILP branch-and-cut algorithms have exponential complexity,
polynomial-time approximation algorithms are available for the lattice-based method.
We discuss our implementation in \cref{sec:implement},
which employs the approximation algorithm for significant speed improvements.
Our previous work provides a benchmark comparison of the performance of ILP and LLL approaches for the subproblems,
and provides strong numerical evidence for the latter~\cite{levinson2025recovery}.

\subsection{Lattices and the LLL Algorithm} \label{sec:lattice_background}

Consider a linearly independent set of $d$ vectors given as the columns of a matrix 
${\tt B} = \begin{bmatrix}
    {\bf b}_0 & \cdots &  {\bf b}_{d-1}
\end{bmatrix} \in \R^{n \times d}$.
The set of all integer linear combinations of columns of ${\tt B}$ defines a $d$-dimensional lattice in $\R^n$,
\begin{equation*}
    \mathcal{L} = \set[\Big]{\sum_{j = 0}^{d-1} \alpha_{j}{\bf b}_j : {\boldsymbol \alpha} \in \Z^d}.
\end{equation*}
The set ${\tt B}$ is called a basis for lattice $\mathcal{L}$.
Since lattices are discrete subsets of $\R^n$~\cite{Galbraith_2012}, 
every lattice contains a shortest nonzero vector with respect to any fixed norm.  
The shortest vector problem (SVP) asks for such a vector in a given lattice.

As a generalization of SVP, 
the $i$th successive minima of $\mathcal{L}$,
denoted $\lambda_i(\mathcal{L})$ for $1 \le i \le d$,
is the radius of the smallest ball containing $i$ linearly independent vectors in $\mathcal{L}$.
In particular, $\lambda_1(\mathcal{L})$ is the length of the shortest nonzero vector in $\mathcal{L}$.  
The successive minima are related to the shortest independent vectors problem (SIVP),
which asks for a set of $i$ linearly independent lattice vectors whose maximum norm is as small as possible.  
Equivalently, SIVP seeks linearly independent vectors attaining the $i$th successive minimum $\lambda_i(\mathcal{L})$.  
The computational complexities of these lattice problems are open topics. In
\cite{ajtai1998shortest}, it is shown that SVP is NP-hard under randomized reductions,
while SVIP is also known to be NP-hard~\cite{blomer1999sivp}.
These complexity results extend to the analogous approximation problems for a fixed constant approximation factor~\cite{khot2005hardness,micciancio2001shortest,blomer1999sivp},
and certain complexity class conjectures imply that this is still true for all approximation factors which scale as a polynomial in $d$~\cite{khot2005hardness,AGGARWAL2021106065,GOLDREICH2000540}.

The basis ${\tt B}$ of a lattice is not unique. 
Although different bases generate the same lattice, 
their geometric properties can vary considerably, 
and many lattice algorithms perform substantially better when given a well-conditioned basis.
Nevertheless, 
the volume of the fundamental region of a lattice is invariant under the choice of basis.
This quantity, 
known as the lattice determinant,
is given by the square root of the determinant of the Gram matrix of any lattice basis,
\begin{equation} \label{eq:lattice_det}
    \Lambda(\mathcal{L}) = \sqrt{\det({\tt B}^T{\tt B})}.
\end{equation}
The length and tractability of a lattice basis are often related to its orthogonality defect,
which is defined as the ratio of the product of the basis vector lengths to the lattice determinant,
\begin{equation} \label{eq:ortho_defect}
    \frac{\prod_{i=1}^d \norm{{\bf b}_i}}{\Lambda(\mathcal{L})}.
\end{equation}
Basis reduction problems take a lattice basis as input and seek a reduced basis which is nearly orthogonal.
Finding a basis which minimizes the orthogonality defect in \cref{eq:ortho_defect} is NP-hard,
so practical lattice reduction algorithms instead seek efficient approximations. 

Lenstra, Lenstra, and Lov\'asz proposed an alternative basis reduction~\cite{Lenstra1982}.
A basis ${\tt B}$ is LLL-reduced for a parameter $0.25 < \delta < 1$ if it satisfies
\begin{equation*} 
    \begin{split}
        \abs{\mu_{ij}} &\le \frac{1}{2}, 
        \qquad \text{for all } 0 \le j < i < d,
        \qquad \qquad \qquad \mu_{ij} \coloneqq \frac{\langle{\bf b}_i, {\bf b}_j^* \rangle}{ \norm{{\bf b}_j^*}^2}\\
        (\delta - \mu_{i,i+1}^2)\norm{{\bf b}_i^*}^2 &\le \norm{{\bf b}_{i+1}^*}^2,
        \qquad \text{for all } 0\le i < d,
    \end{split}
\end{equation*}
where ${\tt B}^*$ is the Gram-Schmidt orthogonalization of ${\tt B}$.
The first projection condition ensures that each reduced basis vector is nearly orthogonal to all preceding Gram-Schmidt vectors. 
Consequently, 
the reduced basis vectors cannot be substantially shortened by subtracting integer multiples of earlier basis vectors.  
The second ordering condition roughly sorts the basis vectors by increasing length,
preventing the basis vector lengths from decreasing too rapidly.
Without such an ordering, a basis could contain unnecessarily long vectors,
as projecting a later short vector onto an earlier long vector would easily satisfy the projection condition. 
Together, these conditions guarantee that the basis is reasonably short and nearly orthogonal.

The LLL reduced basis approximates both orthogonality defect minimization and SIVP 
with approximation factors that depend only on the lattice dimension~\cite[\getcrefname{theorem}~9]{Nguyen2010hermite}.
For SIVP,
the norm of the $i$th LLL-reduced basis vector approximates the $i$th successive minima
\begin{equation} \label{eq:lll_approx}
    \norm{{\bf b}_i}
    \le \left( \frac{4}{4\delta - 1} \right)^{(d-1)/2}
    \lambda_i(\mathcal{L}).
\end{equation}
The primary advantage of working with this basis reduction is that the LLL algorithm computes it in polynomial time.
While other basis reduction problems, 
such as Hermite and Hermite–Korkine–Zolotarev, 
provide tighter guarantees on basis lengths and orthogonality,
no polynomial-time algorithms are known for computing such reductions~\cite{Nguyen2010hermite,hanrot2008worstcasehermitekorkinezolotarevreducedlattice}.

Our work primarily uses the LLL algorithm for numerical reconstructions, allowing \cref{alg:memo_1D} to run in pseudo-polynomial time.  To isolate the intrinsic difficulty of the reconstruction problem, our later analysis focuses on the geometry of the resulting lattice rather than the approximation guarantees of any particular lattice reduction algorithm.  In particular, we estimate how many lattice vectors are shorter than the target vector to be recovered.  This characterizes the stability of the integer linear system \cref{eq:ip_better_J} itself, rather than the stability of applying LLL to the recovery problem. 
We also note that the approximation bound in \cref{eq:lll_approx} is not tight for many common lattices~\cite{Aardal2000}.  Thus it is reasonable to treat lattice reduction as a black-box subroutine and focus instead on the underlying lattice geometry.   

Our analysis is also distinct from SIVP in a subtle but important way.  By estimating the number of lattice vectors shorter than the target vector, we obtain a condition for recovery: the target vector must appear among the $m$ shortest lattice vectors.  Unlike SIVP, this condition does not require the short vectors to be linearly independent.  Therefore, the recovery problem studied here is not directly approximated by LLL.  Nevertheless,
our numerical experiments suggest that the number of lattice vectors shorter than the target vector provides a useful model for the practical behavior of LLL on our reconstruction problem, while  also
highlighting situations in which this approximation breaks down. 

\subsection{Problem Formulation}

\def\coeffs{\begin{bmatrix} {\bm \alpha} & \gamma \end{bmatrix}^T}
\def\coeffsone{\begin{bmatrix} {\bm \alpha} & 1 \end{bmatrix}^T}

We focus on the lattice formulation of \cref{alg:memo_1D}, as it contains the essential ideas needed for the analysis. The corresponding results for \cref{alg:freq} will readily follow.  A brief discussion of the corresponding lattice reformulation for \cref{alg:freq} is provided as a remark at the end of this section.

We formulate the subproblem ILP of \cref{alg:memo_1D} given in \cref{eq:ip_better_J} as a lattice problem by constructing the basis,
\begin{equation} \label{eq:lattice_basis}
\begin{split} 
    {\tt B} &= 
    \begin{bNiceArray}{ccc|c}
        {\bf b}_{0} & \cdots & {\bf b}_{N - 1} & {\bf b}_{N} \\
    \end{bNiceArray} \\
    &\qquad=
    \begin{NiceArray}{[ccc|c]c[c]}
        \Block{3-3}{\idmat{N}} & & & \Block{3-1}{-\overline{\bf x}} & \Block{10-1}{=} & \Block{3-1}{{\tt A}} \\
        \\ \\
        \cmidrule(lr){1-4}\cmidrule(lr){6-6}
        0 & \cdots & 0 & \beta_0 & & \beta_0{\tt B}_0 \\
        \cmidrule(lr){1-4}\cmidrule(lr){6-6}
        \beta_1 \idmat{N/p_1} & \cdots & \beta_1 \idmat{N/p_1} & -\beta_1 {\bf x}^{N/p_1} & & \Block{3-1}{\beta_1{\tt B}_1} \\
        \vdots & \ddots & \vdots & \vdots & & \\
        \beta_1 \idmat{N/p_\omega} & \cdots & \beta_1 \idmat{N/p_\omega} & -\beta_1 {\bf x}^{N/p_\omega} & & \\
        \cmidrule(lr){1-4}\cmidrule(lr){6-6}
        \beta_2 \uroot{N}{0\cdot k_1} & \cdots & \beta_2 \uroot{N}{(N - 1)k_1} & -\beta_2 \tilde{x}_{k_1} & & \Block{3-1}{\beta_2{\tt B}_2} \\
        \vdots & \ddots & \vdots & \vdots & & \\
        \beta_2 \uroot{N}{0\cdot k_J} & \cdots & \beta_2 \uroot{N}{(N - 1)k_J} & -\beta_2 \tilde{x}_{k_J} & & \hphantom{\beta_1{\bf x}^{(N/p_\omega)}}
    \end{NiceArray}.
\end{split}
\end{equation}
\Cref{eq:lattice_basis} uses $N$ instead of $N'$ like \cref{eq:ip_better_J}.
The symmetry of the subproblems ensures that these formulations are equivalent,
so we may focus our lattice exposition on the "top-level" problem where $N' = N$ for simplicity.
In \cref{eq:lattice_basis}, $\beta_0$, $\beta_1$, and $\beta_2$ are parameters, and $\overline{\bf x}$ is an initial guess. 
Various works have successfully applied bases similar to \cref{eq:lattice_basis} to solve subset sum and knapsack problems~\cite{Lagarias1985} and
linear diophantine systems~\cite{Aardal2000}.
However, in these works,  the guess $\overline{\bf x}$ is always set to $\overline{\bf x}={\bf 0}$.

For the remainder of this work,
we refer to the lattice with basis ${\tt B}$ in \cref{eq:lattice_basis} by $\mathcal{L}$,
understanding that $\mathcal{L}$ is parameterized by $N$, ${\bf x}$, $\overline{\bf x}$, the $k_j$s, and the $\beta$s.
For consistency with the real-valued basis vectors in the lattice definition from \cref{sec:lattice_background},
in practice we use the equivalent final basis block,
$\beta_2\begin{bmatrix}
    \Re [{\tt B}_2] \\ \Im [{\tt B}_2]
\end{bmatrix}$,
which stacks the real and imaginary parts of $\beta_2{\tt B}_2$.
However,
the 2-norm of a lattice vector is invariant with respect to these representation choices,
\begin{equation*}
    \abs{\ell_n}^2 = \abs{\Re\ell_n}^2 + \abs{\Im\ell_n}^2,
\end{equation*}
which justifies using the complex shorthand formulation in \cref{eq:lattice_basis} for our analysis.

Any vector ${\bm \ell} \in \mathcal{L}$ in the lattice may be written as ${\tt B}\coeffs$ for coefficients ${\bm \alpha} \in \Z^{N}$ and $\gamma \in \Z$. 
Analogous to the block structure of the basis
${\tt B}$,
we express the lattice vectors in block structure by, 
$\bm{\ell} = \begin{bmatrix} \bm{\ell}^{({\tt A})} & \bm{\ell}^{({\tt B}_0)} & \bm{\ell}^{({\tt B}_1)} & \bm{\ell}^{({\tt B}_2)} \end{bmatrix}$, 
where $\bm{\ell}^{({\tt A})} = {\tt A}\coeffs$ and $\bm{\ell}^{({\tt B}_t)} = \beta_t{\tt B}_t\coeffs$ for $t=0,1,2$.  
The blocks $\bm{\ell}^{({\tt A})}$ and $\bm{\ell}^{({\tt B}_0)}$ may be computed from \cref{eq:lattice_basis},
\begin{equation} \label{eq:lAB0}
{\bm \ell}^{({\tt A})}= {\bm \alpha}-\gamma\overline{\bf x}  \qquad , \qquad
{\bm \ell}^{({\tt B}_0)}= \begin{bmatrix}
    \gamma\beta_{0}
\end{bmatrix} .
\end{equation}
The vector $\bm{\ell}^{({\tt B}_1)}$ itself has a natural block structure based on the prime factors $p_1, \ldots, p_\omega$ of $N$. 
We write 
$\bm{\ell}^{({\tt B}_1)} 
= \begin{bmatrix} \bm{\ell}^{(p_1)} & \cdots & \bm{\ell}^{(p_\omega)} \end{bmatrix}$, 
where each component $\bm{\ell}^{(p_t)}$ is given by
\begin{equation}
 \label{eq:ellP_def}
    {\bm\ell}^{(p_t)}
    = \beta_{1}\left({\bm\alpha}^{(N/p_t)} -\gamma {\bf x}^{(N/p_t)}\right) .
\end{equation}
Lastly, $\bm{\ell}^{({\tt B}_2)}$ is defined entrywise by
\begin{align}
\label{eq:B2_err}
\ell^{({\tt B}_2)}_{j}
= \beta_{2}\left(\tilde{\alpha}_{k_j}-\gamma\tilde x_{k_j} \right),
\qquad \text{for } 1 \le j \le J .
\end{align}
For any lattice vector,
the coefficients $\coeffs$ can easily be obtained from 
${\bm \ell}^{({\tt A})}$ and ${\bm \ell}^{({\tt B}_0)}$ 
using \cref{eq:lAB0}.
Our recovery approach reduces the basis in \cref{eq:lattice_basis} with the LLL algorithm to obtain short,
nearly orthogonal
vectors in the lattice,
and we hope to recover the true signal ${\bf x}$ from one of these short vectors.
The choice of $\beta$ parameters ideally guarantees that the vector with lattice coefficients $\coeffs=\pm\begin{bmatrix} {\bf x} & 1 \end{bmatrix}^T$
is sufficiently short to appear in the LLL reduced basis,
as we can then recover ${\bf x}$ from the lattice coefficients. 
This desired lattice vector is denoted by ${\bm \ell}^*$
and is related to the original signal ${\bf x}$ by,
\begin{equation*} 
    {\bm \ell}^* = {\tt B}
    \begin{bmatrix}
        {\bf x} \\
        \cmidrule(lr){1-1}
        1
    \end{bmatrix} =
    \begin{NiceArray}{[c]c[c]}
        \Block{3-1}{\tt A} & 
        \Block{10-1}{
            \begin{bmatrix}
                {\bf x} \\
                \cmidrule(lr){1-1}
                1
            \end{bmatrix} =
        } & x_0 - \overline{x}_0 \\
        & & \vdots \\
        && x_{N - 1} - \overline{x}_{N - 1} \\
        \cmidrule(lr){1-1}
        \cmidrule(lr){3-3}
        \beta_0{\tt B}_0 & & \beta_0\\
        \cmidrule(lr){1-1}
        \cmidrule(lr){3-3}
        \Block{3-1}{\beta_1{\tt B}_1} & & 0 \\
        & & \vdots\\
        & & 0 \\
        \cmidrule(lr){1-1}
        \cmidrule(lr){3-3}
        \Block{3-1}{\beta_2{\tt B}_2} & & 0 \\
        & & \vdots \\
        \hphantom{x_{N - 1} - \overline{x}_{N - 1}} & & 0 \\
    \end{NiceArray} 
\end{equation*}
We can directly compute the length of ${\bm \ell}^*$ as 
 \begin{equation} \label{eq:shortest_vec}
    \norm{{\bm \ell^*}}
    = \sqrt{\sum_{n=0}^{N - 1}\abs{x_{n}-\overline x_{n}}^{2}+\beta_{0}^{2}}
    = \sqrt{K^{2}+\beta^{2}_{0}} , \qquad \text{where } K=\norm{{\bf x}-\overline{\bf x}}.
\end{equation}
Our intuitive goal is to choose the $\beta$ parameters such that there are not too many lattice vectors shorter than $\sqrt{K^2+\beta_0^2}$.

For a general lattice vector with coefficient
$\gamma = 1$,
$\begin{bmatrix}  \bm{\ell}^{({\tt B}_1)}/\beta_1 & \bm{\ell}^{({\tt B}_2)}/\beta_2 \end{bmatrix}$ is exactly the difference between the two sides of \cref{eq:ip_better_J}.
By \cref{lem:uniq}, 
${\bm \alpha} = {\bf x}$ if and only if this difference is zero.
The parameters $\beta_1$ and $\beta_2$ thus act as penalties for the coefficients ${\bm \alpha}$ not satisfying \cref{eq:ip_better_J} exactly.
Scaling the system by $\beta_1$ and $\beta_2$ also scales this difference,
making any nonzero vector longer.  
In general,
when ${\bm \ell}^{({\tt B}_1)}={\bf 0}$,
we say that ${\bm \ell}$ satisfies the ${\tt B}_1$ constraints (and similarly for ${\tt B}_2$). While the parameters $\beta_1$ and $\beta_2$ have similar roles of enforcing solutions to the linear system,
they are differentiated by the type of constraints in the respective blocks.
As discussed in \cref{sec:algs},
the integer equations of the ${\tt B}_1$ block are numerically stable,
while the complex equations of the ${\tt B}_2$ block are inherently unstable.
Thus,
we expect smaller values of $\beta_1$ will enforce that the ${\tt B}_1$ block of short lattice vectors is exactly 0,
while very large values of $\beta_2$ may be required for the ${\tt B}_2$ block to be sufficiently small.

We will often also specify the corresponding value of the coefficient $\gamma$.  
Although $\abs{\gamma}=1$ is required for ${\bm \ell}=\pm{\bm \ell}^*$, a lattice vector may satisfy the ${\tt B}_1$ or ${\tt B}_2$ constraints for other values of $\gamma$. 
However, 
if ${\bm \ell}$ satisfies both ${\tt B}_1$ and ${\tt B}_2$ constraints, 
then  \cref{lem:uniq} implies that ${\bm \ell}=\gamma{\bm \ell}^*$.  
Such vectors should not appear in the reduced lattice for $\abs{\gamma} > 1$,
since they are longer than ${\bm \ell}^*$.
The last parameter $\beta_0$ acts as a penalty on the size of $\abs{\gamma}$, 
as it directly scales the $\bm{\ell}^{({\tt B}_0)}$ entry of the lattice vector.
A large value of $\beta_0$ prevents $\abs{\gamma}$ from being too large for short lattice vectors,
helping select the desired vector with $\gamma = 1$.
The following sections analyze the values of $\beta_0$, $\beta_1$, and $\beta_2$ required for successful recovery with LLL.

The basis ${\tt B}$ also incorporates a guess $\overline{\bf x}$ for the signal.
For the vectors ${\bm \ell}$ with $\gamma = \pm 1$,
the $\bm{\ell}^{({\tt A})}$ block in \cref{eq:lAB0} has a small norm when the coefficients ${\bm \alpha}$ are close to the guess, 
$\pm \overline{\bf x}$.
An accurate guess thus favors lattice coefficients which are close to the true signal.
A natural guess $\overline{\bf x}$ can be constructed from the limited set of known DFT coefficients.
As discussed in relation to \cref{eq:ip_better_J},
the divide-and-conquer strategy gives the final iteration of \cref{alg:memo_1D} access to all DFT coefficients $\tilde{ x}_{k}$ with $\gcd(k, N) \ne 1$.
These $N - \phi(N)$ DFT coefficients from the decimated signals combine with the $2J$ frequencies $\pm k_1, \ldots, \pm k_J$ to form a set of $N - \phi(N) + 2J$ known frequencies,
\begin{equation} \label{eq:sampled_freq}
    F = \set{\pm k_1, \ldots, \pm k_J}\cup \set{k : \gcd(k, N) > 1}.
\end{equation}
The guess $\overline{\bf x}$ is then defined as the least-norm signal that aligns with the known DFT data.
By the Parseval relation, 
this
corresponds to filling in the unknown DFT coefficients with 0s,
so $\overline{\bf x}$ is given in frequency space by,
\begin{equation} \label{eq:guess}
    \tilde{\overline{x}}_k = 
    \begin{cases}
        \tilde x_k & k \in F \\
        0 & k \notin F .
    \end{cases}
\end{equation}
Note that $\overline{\bf x}$ cannot be an integer signal,
provided $\tilde{x}_1 \ne 0$,
as this would violate \cref{lem:uniq}.
\cref{sec:theory}
quantifies the benefit of incorporating a close guess for $\overline{\bf x}$, 
and looks at two alternative guess strategies.

\begin{remark}
For \cref{alg:freq}, the corresponding lattice basis is given by 
\begin{equation}
\label{eq:lattice_basis2}
    \begin{NiceArray}{[ccc|c]}
        \Block{3-3}{\idmat{\phi(N)}} & & & \Block{3-1}{-\overline{\bf y}} \\
        \\ \\
        \cmidrule(lr){1-4}
        0 & \cdots & 0 & \beta_0 \\
        \cmidrule(lr){1-4}
        \beta_2 \uroot{N}{0\cdot k_1} & \cdots & \beta_2 \uroot{N}{(\phi(N) - 1)k_1} & -\beta_2 \tilde{x}_{k_1} \\
        \vdots & \ddots & \vdots \\
        \beta_2 \uroot{N}{0\cdot k_J} & \cdots & \beta_2 \uroot{N}{(\phi(N) - 1)k_J} & -\beta_2 \tilde{x}_{k_J} \\
    \end{NiceArray}.
\end{equation}
This lattice \cref{eq:lattice_basis2} is similar to \cref{eq:lattice_basis},
except that it keeps only the first $\phi(N)$ basis vectors and the last basis vector,
and the ${\tt B}_1$ block of each vector is entirely removed.
One can utilize an analogous guess approach by taking the least-norm solution to the convex relaxation of the ILP \cref{eq:freq_sub_1d},
\begin{equation}
\label{eq:guess_freq}
    \overline{\bf y} = \underset{{\bf z} \in \R^{\phi(N)}}{\arg\min} \norm{{\bf z}}, 
    \qquad \text{such that }\sum_{n = 0}^{\phi(N) - 1}z_n\uroot{N}{nk_j} = \tilde{x}_{k_j},
    \qquad\text{for } 1 \le j \le J.
\end{equation}  
\end{remark}

\subsection{Implementation Details} \label{sec:implement}

\Cref{alg:memo_1D} was implemented in pure Python leveraging the fast vectorization provided by NumPy~\cite{harris2020array}.
To solve the ILP through the lattice formulation of \cref{eq:lattice_basis}, 
we use the floating point LLL algorithm (FPLLL) \cite{nguyen2009lll,stehle2009floating} included in the fplll library~\cite{fplll} with Python interface fpylll~\cite{fpylll}. 
The FPLLL algorithm replaces exact rational arithmetic with floating-point computations, 
accelerating lattice basis reduction.
This significantly improves speed, but introduces the challenge of accumulated rounding errors. 
FPLLL balances efficiency and stability with adaptive precision and error monitoring, 
achieving a reduced basis quality comparable to exact LLL while being much faster in practice. 
In particular,
the algorithm has runtime bound,
\begin{equation} \label{eq:lll_runtime}
    \bigo{d^4n(d + \log B)\log B},
\end{equation}
when applied to the basis of a $d$-dimensional lattice in $\R^n$ with basis vector norms bounded by $B$,
while
the runtime of exact LLL is cubic in $\log B$~\cite{nguyen2009lll}.
This efficiency allowed us to recover larger signals than were feasible in previous work using lattice methods~\cite{levinson2023}.

The FPLLL libraries operate on integer bases, 
whereas the ${\tt B}_2$ block of the lattice in \cref{eq:lattice_basis} contains irrational real entries.
To handle this discrepancy, 
our algorithm implementation introduces an additional parameter $\beta_3$.
We multiply the entire basis ${\tt B}$ by $\beta_3$ and then truncate the scaled basis, 
before passing $\floor{\beta_3{\tt B}}$ to the FPLLL algorithm.
Truncation limits the effective precision in the data by discarding trailing digits of the DFT coefficients.
As both $\beta_2$ and $\beta_3$ scale ${\tt B}_2$ before truncation, 
the number of preserved digits past the decimal point is approximately $\log_{10}(\beta_2\beta_3)$. 
After sufficient testing, 
we set $\beta_3=\max\set{1 \times 10^2, \frac{1}{\beta_0}}$ in our numerical simulations. 

This interpretation of precision assumes exact knowledge of the DFT coefficients. 
In practice however, the DFT coefficient data has finite precision.
While our theoretical results guarantee that two distinct integer signals cannot have identical sampled spectra, 
the spectra may be indistinguishable at a certain precision level.
Thus, 
standard single or even double precision data may not suffice for unique recovery in some cases.
To explore the limits of the lattice method and enable recovery of larger signals, 
we conduct some numerical experiments with mixed precision floating point arithmetic using the Python module gmpy2 module.
As the lattice entries become larger in magnitude,
we also require more precision in the floating point computations for LLL,
so we leveraged the mixed precision functionality of the fpylll library during lattice reduction.

\Cref{alg:svp} presents our implementation of the lattice-based solution to \cref{eq:ip_better_J}.
The algorithm uses a numerical parameter $\epsilon$ to determine whether a candidate integer signal matches the DFT data to a specified tolerance.
We first check whether lattice reduction is required in Line~\ref{ln:guess_check}.
If $N = 1, 2, 3, 4,$ or 6,
all DFT coefficients are available from the minimal set.
Additionally,
if the sampled DFT coefficients are within the tolerance of 0,
\cref{lem:uniq} immediately applies $\tilde{x}_k = 0$ for all $\gcd(k, N) = 1$.
These cases require no lattice reduction,
since the guess is immediately correct with $\overline{\bf x} = {\bf x}$.

\begin{algorithm}[htb] \caption{Solving \cref{eq:ip_better_J} with LLL} \label{alg:svp}
\begin{algorithmic}[1] 

\Require Error tolerance $\epsilon$, scale $\beta_3$

\If{$\phi(N) \le 2$ or $\displaystyle\max_{j=1}^J\abs{\tilde{x}_k} < \epsilon$}
    \State \Return $\overline{\bf x}$
    \label{ln:guess_check}
\EndIf

\State Construct the lattice basis ${\tt B}$ in \cref{eq:lattice_basis} \label{ln:basis}
\State ${\tt B}^* \gets$ the LLL reduction of $\floor{\beta_3{\tt B}}$ \label{ln:LLL}

\For{${\bm \ell} \in {\tt B}^*/\beta_3$}
    
    \State ${\bf y} \gets \round{\pm{\bm \ell}^{({\tt A})} + \overline{\bf x}}$ \label{ln:round}
    \If{$\ell_{N} = \pm\beta_0$ and $\displaystyle\max_{j=1}^J\abs{\tilde{y}_{k} - \tilde{x}_{k}} \le \epsilon$} \label{ln:check}
        \State \Return ${\bf y}$
    \EndIf
\EndFor
\end{algorithmic}
\end{algorithm}

The remainder of \cref{alg:svp} recovers ${\bf x}$ using FPLLL.
Lines~\ref{ln:basis} and~\ref{ln:LLL} construct the integer lattice basis and reduce it with FPLLL, 
yielding an LLL-reduced basis with parameter $\delta = 0.9972$.
For each vector in the reduced basis, we round its entries in Line~\ref{ln:round} before checking against the DFT data.
This rounding handles the errors introduced by rescaling and truncating ${\tt B}$ to $\floor{\beta_3{\tt B}}$.  
Our chosen value of $\beta_3$ is sufficiently large to ensure this rounding does not introduce any errors.
Finally, the check step in Line~\ref{ln:check} includes the additional condition $\ell_N = \pm\beta_0$,
as the vector ${\bm \ell} = {\tt B}\coeffs$ satisfies $\ell_N = \pm\beta_0$ if and only if  $\gamma = \pm1$.  
The full implementation of minimal DFT sampling and the one-dimensional and two-dimensional inversion algorithms is available at~\cite{intvert}.

\subsection{Reduced Lattice Structure}
\label{sec:lattice_ex}

The work of \cite{Aardal2000} characterizes the structure of the LLL reduction on a general lattice basis similar in structure to \cref{eq:lattice_basis}. 
However,
their framework omits ${\tt B}_2$ block in \cref{eq:lattice_basis} (and thus the $\beta_2$ parameter), 
and sets guess to $\overline{\bf x} = {\bf 0}$. 
Consider a general integer-valued ${\tt B_1}$ block,
and let ${\bf b}_{N}^{({\tt B}_1)}$ denote the last column of this ${\tt B}_1$ block, 
and 
$\overline{{\tt B}}_1 \coloneqq \begin{bmatrix}
    {\bf b}_0^{({\tt B}_1)} & \cdots & {\bf b}_{N-1}^{({\tt B}_1)}
\end{bmatrix}$ 
be the submatrix of ${\tt B}_1$ obtained by removing this last column ${\bf b}_{N}^{({\tt B}_1)}$.  
Let $m$ be the dimension of the null space of $\overline{{\tt B}}_1$.  

For sufficiently large values of $\beta_0$ and $\beta_1$, it is proven in \cite{Aardal2000} that the first $m$ lattice vectors  ${\bm \ell}_0^{({\tt A})},\dots,{\bm \ell}^{({\tt A})}_{m-1}$ in the LLL-reduced basis form an integer basis for the null space of the matrix  $\overline{{\tt B}}_1$. 
Equivalently, 
these lattice vectors
${\bm \ell}_0, \dots, {\bm \ell}_{m-1}$
all satisfy the ${\tt B}_1$ constraints with $\gamma=0$. 
It is also shown that the next lattice vector ${\bm \ell}^{({\tt A})}_{m}$ gives a particular integer solution to the linear system $\overline{{\tt B}}_1{\bf y} = {\bf b}_{N}^{({\tt B}_1)}$, 
or equivalently, 
satisfies the ${\tt B}_1$ constraints with $\gamma = 1$.  

For our specific ${\tt B}_1$ block in \cref{eq:lattice_basis}, 
the dimension of the null space of $\overline{{\tt B}}_1$ is $m=\phi(N)$.
To demonstrate how the addition of the ${\tt B}_2$ block interacts with the aforementioned structure,
we consider an example of the basis in \cref{eq:lattice_basis} for the  signal 
${\bf x} = \begin{bmatrix}
    1 & 2 & 3 & 0 & 1 & 4 & 1
\end{bmatrix}$  of length $N=7$
with no guess ($\overline{\bf x} = {\bf 0}$).
Below is the LLL reduction using the implementation described in the previous section, 
\def\compactcolumns{\setlength{\arraycolsep}{2pt}}
\begin{equation} \label{eq:reduced_no_guess}
\begingroup \compactcolumns
\begin{bNiceArray}[last-col]{S[table-format=+1.2]S[table-format=+1.2]S[table-format=+1.2]S[table-format=+1.2]S[table-format=+1.2]S[table-format=+1.2]|S[table-format=1.2]|S[table-format=+1.2]}
2.00 & 1.00 & -1.00 & 2.00 & 2.00 & -2.00 & 1.00 & 2.00
 & \Vbrace{7}{~{\tt A}}
\\
-2.00 & -2.00 & -2.00 & -2.00 & -3.00 & -1.00 & 2.00 & -1.00
\\
1.00 & 2.00 & 3.00 & 0.00 & 0.00 & 1.00 & 3.00 & -1.00
\\
-1.00 & 0.00 & 0.00 & 2.00 & 3.00 & 2.00 & 0.00 & 2.00
\\
2.00 & -2.00 & -3.00 & -2.00 & -2.00 & -3.00 & 1.00 & 0.00
\\
-2.00 & 2.00 & 2.00 & 1.00 & -1.00 & 0.00 & 4.00 & -1.00
\\
0.00 & -1.00 & 1.00 & -1.00 & 1.00 & 3.00 & 1.00 & 0.00
\\
\cmidrule(lr){1-8}
{0} & {0} & {0} & {0} & {0} & {0} & {\beta_0} & {0} & \Vbrace[line-style=brace1]{1}{{\tt B}_0} \\
\cmidrule(lr){1-8}
{0} & {0} & {0} & {0} & {0} & {0} & {0} & {\beta_1}
 & \Vbrace[line-style=brace1]{1}{{\tt B}_1}
\\
\cmidrule(lr){1-8}
{\phantom{-}0.07\beta_2} & {\phantom{-}0.04\beta_2} & {-0.03\beta_2} & {-0.09\beta_2} & {\phantom{-}0.07\beta_2} & {-0.07\beta_2} & {0} & {\phantom{-}0.02\beta_2}
 & \Vbrace{2}{~{\tt B}_2}
\\
{-0.06\beta_2} & {-0.09\beta_2} & {\phantom{-}0.07\beta_2} & {\phantom{-}0.02\beta_2} & {-0.02\beta_2} & {-0.02\beta_2} & {0} & {-0.09\beta_2}
\\
\end{bNiceArray}
\endgroup
\end{equation}

The basis in \cref{eq:reduced_no_guess} shows the same structure as described in~\cite{Aardal2000}.
The vertical delimiters separate the first $\phi(N)$ vectors,
the $(\phi(N) + 1)$-th vector,
and the remaining basis vectors.
We observe that these first $\phi(N) + 1$ vectors all satisfy the ${\tt B}_1$ block constraints exactly.  
The ${\tt A}$ blocks of the first $\phi(N)=6$ lattice vectors are in the null space of $\overline{\tt B}_1$,
while the 7th lattice vector has $\gamma = 1$.
Although the basis in \cite{Aardal2000} does not include a ${\tt B}_2$ block, 
we were able to select a value of $\beta_2$ sufficiently large  to recover the signal.  
Consequently, 
the 7th reduced lattice vector is ${\bm \ell}^*$ rather than some other particular solution to the ${\tt B}_1$ constraints with $\gamma =1$. 
This is confirmed by noting that the 7th reduced lattice vector ${\bm \ell}_6$ satisfies ${\bm \ell}_6^{({\tt A})} = {\bf x}$ and is the only vector in the reduced basis to satisfy ${\bm \ell}^{({\tt B}_2)} = {\bf 0}$.
We note that the remaining $N - \phi(N)=1$ basis vector in the reduced lattice does not satisfy the ${\tt B}_1$ constraints.  This follows from 
the linear independence of the basis. 

With the inclusion of a guess $\overline{\bf x}$,
the reduced basis no longer structurally resembles the reduced basis from \cite{Aardal2000}. 
For the same signal and $\beta$ parameter values as in \cref{eq:reduced_no_guess}, 
but using the guess as defined in \cref{eq:guess},
the LLL-reduced basis is given by

\def\compactcolumns{\setlength{\arraycolsep}{2pt}}
\begin{equation} \label{eq:reduced_guess}
\begingroup \compactcolumns
\begin{bNiceArray}[last-col]{S[table-format=+1.2]S[table-format=+1.2]S[table-format=+1.2]S[table-format=+1.2]S[table-format=+1.2]S[table-format=+1.2]|S[table-format=+1.2]|S[table-format=+1.2]}
0.01 & -0.83 & 0.17 & -0.32 & 1.66 & 2.32 & -0.80 & 0.16
 & \Vbrace{7}{~{\tt A}}
\\
1.12 & 0.35 & -0.65 & -0.59 & -0.71 & -1.41 & -0.41 & 0.24
\\
-1.08 & 1.49 & -0.51 & 0.95 & 1.03 & -0.95 & 1.33 & -0.44
\\
-0.18 & -1.53 & 1.47 & -0.12 & -1.94 & 2.12 & -0.89 & 0.65
\\
0.89 & -0.68 & -0.68 & -0.74 & 1.37 & -1.26 & 0.10 & 0.42
\\
-0.17 & 2.14 & -0.86 & 0.55 & 0.72 & 0.45 & -0.21 & -0.69
\\
-0.56 & -0.93 & 1.07 & 0.29 & -2.15 & -1.29 & 0.95 & 0.64
\\
\cmidrule(lr){1-8}
{6\beta_0} & {\beta_0} & {\beta_0} & {4\beta_0} & {-2\beta_0} & {-4\beta_0} & {13\beta_0} & {-5\beta_0} & \Vbrace[line-style=brace1]{1}{{\tt B}_0} \\
\cmidrule(lr){1-8}
{0} & {0} & {0} & {0} & {0} & {0} & {0} & {\beta_1}
 & \Vbrace[line-style=brace1]{1}{{\tt B}_1}
\\
\cmidrule(lr){1-8}
{0} & {0} & {\phantom{-}0.03\beta_2} & {-0.07\beta_2} & {\phantom{-}0.01\beta_2} & {-0.03\beta_2} & {0} & {-0.02\beta_2}
 & \Vbrace{2}{~{\tt B}_2}
\\
{\phantom{-}0.03\beta_2} & {0} & {\phantom{-}0.07\beta_2} & {\phantom{-}0.03\beta_2} & {\phantom{-}0.02\beta_2} & {-0.01\beta_2} & {-0.02\beta_2} & {-0.03\beta_2}
\\
\end{bNiceArray}
\endgroup
\end{equation}

The first $\phi(N) + 1$ lattice vectors in \cref{eq:reduced_guess} still satisfy the ${\tt B}_1$ constraints,
while the remaining $N-\phi(N)$ vectors do not. 
However,
there is no longer a clear distinction between null space basis vectors and a particular solution to the ${\tt B}_1$ constraints. 
In fact in \cref{eq:reduced_guess}, every lattice vector has a nonzero coefficient $\gamma$,
as ${\bm \ell}^{({\tt B}_0)}\ne 0$.
The chosen parameter values were again sufficient for correct recovery,
as ${\bm \ell}^*$ appears as the second reduced basis vector ${\bm \ell}_{1}$. 
This is confirmed by noting that it is the only reduced basis vector which satisfies
${\bm \ell}^{({\tt B}_1)}={\bm \ell}^{({\tt B}_2)} = {\bf 0}$.
As we have included a guess (and $\gamma = 1$ for this vector), 
${\bf x}$ can be recovered from 
the corresponding block ${\bm \ell}_{1}^{({\tt A})}={\bf x} - \overline{\bf x}$.

The two reduced bases in \cref{eq:reduced_no_guess,eq:reduced_guess} illustrate several structural properties that are central to our  later analysis.
In particular, 
obtaining sharp estimates for sufficient values of $\beta_1$ and $\beta_2$ requires understanding how and where the desired lattice vector ${\bm \ell}^*$ can appear in the reduced basis. 
Moreover, 
we would like to choose $\beta_2$ as small as possible, 
since excessively large values compound numerical precision issues, 
as discussed in the next subsection.

In \cref{sec:beta1},
we derive a condition on $\beta_1$ that guarantees any lattice vector shorter than 
${\bm \ell}^*$ satisfies 
the $B_1$ constraints.
When a guess $\overline{\bf x}$ is included, 
${\bm \ell}^*$
may appear as any of these first $\phi(N) + 1$ indices in the reduced lattice. 
In contrast, 
without a guess, 
there is exactly one reduced basis vector with $\gamma \ne 0$, and therefore ${\bm \ell}^*$ can only appear as exactly the $(\phi(N) + 1)$-th vector.

The reductions in \cref{eq:reduced_no_guess,eq:reduced_guess} used a value of $\beta_2$ which was close to the minimum required for successful inversion.  
This choice plays an important role in the resulting basis structure.
By \Cref{lem:uniq}, 
$\pm{\bm \ell}^*$ is the only nonzero lattice vector that can satisfy both the ${\tt B}_1$ and ${\tt B}_2 $ constraints.
Thus,
increasing $\beta_2$ lengthens all other lattice vectors while leaving the length of $\pm{\bm \ell}^*$ unchanged.  
Therefore,
we expect $\pm{\bm \ell}^*$ to appear earlier in the lattice basis order as $\beta_2$ increases, as the index of ${\bm \ell}^*$ serves as a proxy for the number of lattice vectors shorter than ${\bm \ell}^*$.
This behavior is illustrated in \Cref{fig:structure},
which plots the average index of ${\bm \ell}^*$ in the reduced lattice as a function of $\beta_2$ for several values of $\beta_0$, both with and without a guess.

\delimitershortfall=5pt 
The three curves in \cref{fig:structure} without a guess nearly coincide,
illustrating that the reduced lattice structure is agnostic to the value of $\beta_0$ in this guess regime.
Below the minimum value of $\beta_2$,
the desired vector ${\bm \ell}^*$ does not appear in the reduced lattice. 
Above this threshold,
${\bm \ell}^*$ appears as exactly the $(\phi(N) + 1)$-th reduced basis vector until $\beta_2$ becomes sufficiently large that ${\bm \ell}^*$ becomes the shortest vector overall.  
This behavior is consistent with the fact that exactly one of the vectors in the reduced basis satisfying the ${\tt B}_1$ constraints has $\gamma = 1$, 
and this is the only reduced basis vector whose length depends on $\beta_0$. 

The inclusion of a nonzero guess $\overline{\bf x}$ makes the reduced lattice dependent on $\beta_0$.  
For the smaller values of $\beta_0$ (the purple and red curves in \cref{fig:structure}),
the location of  ${\bm \ell}^*$ in the reduced lattice shows substanially more variation.
Once $\beta_2$ exceeds the recovery threshold,
the average index of ${\bm \ell}^*$ decreases gradually to 1.
However, we observe that the smallest $\beta_0$ value (red curve) is generally shifted to the right of the middle $\beta_0$ value (purple curve). 
This
indicates that choosing $\beta_0$ too small increases the minimum value of $\beta_2$ required for recovery, as vectors with $\abs{\gamma}>1$ are not penalized enough.   
As seen in the example reduced basis with a guess \cref{eq:reduced_guess},
many short vectors have $\abs{\gamma}$ significantly larger than 1,
so increasing $\beta_0$ penalizes these vectors more heavily than ${\bm \ell}^*$.
However, we note that 
multiple vectors with $\gamma = 1$ show up in \cref{eq:reduced_guess},
so even with a larger value of $\beta_0$, we still need to ensure that $\beta_2$ is sufficiently large.

At the larger $\beta_0$ value,
the brown curve shows the same qualitative behavior as the curves obtained without a guess.  
This suggests that a sufficiently large $\beta_0$ value forces the $\gamma$ coefficients to mimic  the structure of the reduced lattice without a guess.  This is apparent by inspecting \cref{eq:reduced_guess}.
Increasing $\beta_0$ substantially lengthens vectors with nonzero $\gamma$, 
eventually making vectors with $\gamma=0$ comparatively shorter.
Despite the qualitative agreement, 
the brown curve reaches an average index of 1 at a smaller value of $\beta_2$ than the curves without a guess. 
Including a guess reduces the magnitude of the ${\tt A}$ block of ${\bm \ell}^*$,
but does not affect the ${\tt A}$ blocks of vectors with $\gamma = 0$,
so a smaller value is required to make ${\bm \ell}^*$ short relative the vectors $\gamma = 0$ vectors with a guess.

Our analysis in \cref{sec:beta0} will demonstrate that an intermediate value of $\beta_0$ is optimal, 
balancing the need to penalize vectors with  large $\abs{\gamma}$ coefficients while still allowing multiple vectors with $\gamma \ne 0$ in the reduced basis. 
While one can always choose  $\beta_0$ large enough that the lattices in \cref{eq:reduced_no_guess,eq:reduced_guess} exhibit identical structure, 
doing so is suboptimal,
because it requires a larger value of $\beta_2$ (which can propagate numerical precision issues and increases runtime).

\begin{figure}[htb]
    \centering
    \includegraphics[width=.67\textwidth]{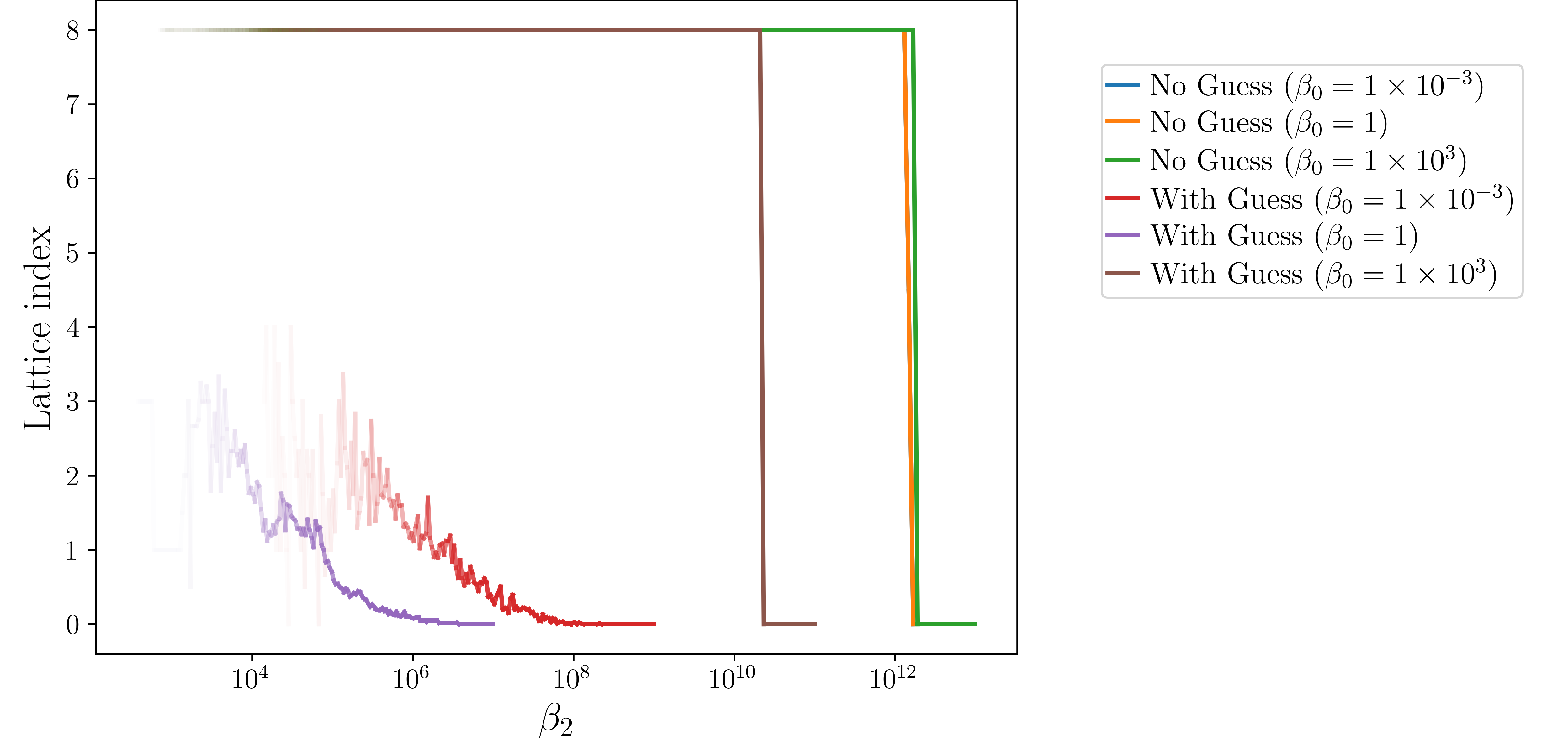}
    \vspace{-0.5pc}
    \caption{The average index of ${\bm \ell}^*$ in the reduced lattice as a function of $\beta_2$ for 100 test signals of length $N=30$ with entries distributed as $\binomdist{1000}{0.5}$, recovered with $J=1$.  The line opacity indicates the fraction of test signals for which ${\bm \ell}^*$ was recovered successfully.  Note that the blue and orange curves visually coincide.  Since $\phi(30)=8$, ${\bm \ell}^*$ appears among the first 9 lattice vectors.  
    }
    \label{fig:structure}
\end{figure}

\subsection{Stability Connection} \label{sec:stability}

The use of integer lattices in the numerical implementation limits the functional precision.
As discussed previously, 
the parameters $\beta_2$ and $\beta_3$ jointly determine how many digits of precision survive the truncation to integers, 
which is approximately $\log_{10}(\beta_2\beta_3)$.
At the same time, 
the sampled DFT coefficients are available only to finite precision in practical applications.
Therefore, 
recovery performance is influenced by the interplay between truncation error arising from the integer lattice representation and finite-precision error in the input data. 

To investigate this interplay, 
we generated a test set of 100 random signals of length $N=25$ with entries distributed as $\binomdist{25}{0.5}$.
For a grid of varying input precisions and $\beta_2$ values (with $\beta_3$ fixed),
we ran \cref{alg:memo_1D} on each test signal.  \cref{fig:prec} shows the number of successful recoveries for each combination of input precision and $\beta_2$.

The orange region of the plot contains the pairs of $\beta_2$ and precision values which generally facilitate successful recovery.
The vertical blue line marks the minimal input precision for which
all test signals were recovered
for an optimal choice of $\beta_2$.
Thus, 
the region to the left of this line
represents the unstable region in which no choice of $\beta_2$ can overcome the insufficient precision in the data.
Similarly, 
the horizontal red line marks the minimal value of $\beta_2$ where all test signals could be recovered. 
The region below this line corresponds to values of $\beta_2$ that do not sufficiently separate the length of the target lattice vector ${\bm \ell}^*$ from competing lattice vectors, preventing correct recovery.

\begin{figure}[htbp]
    \centering
    \includegraphics[width=\textwidth]{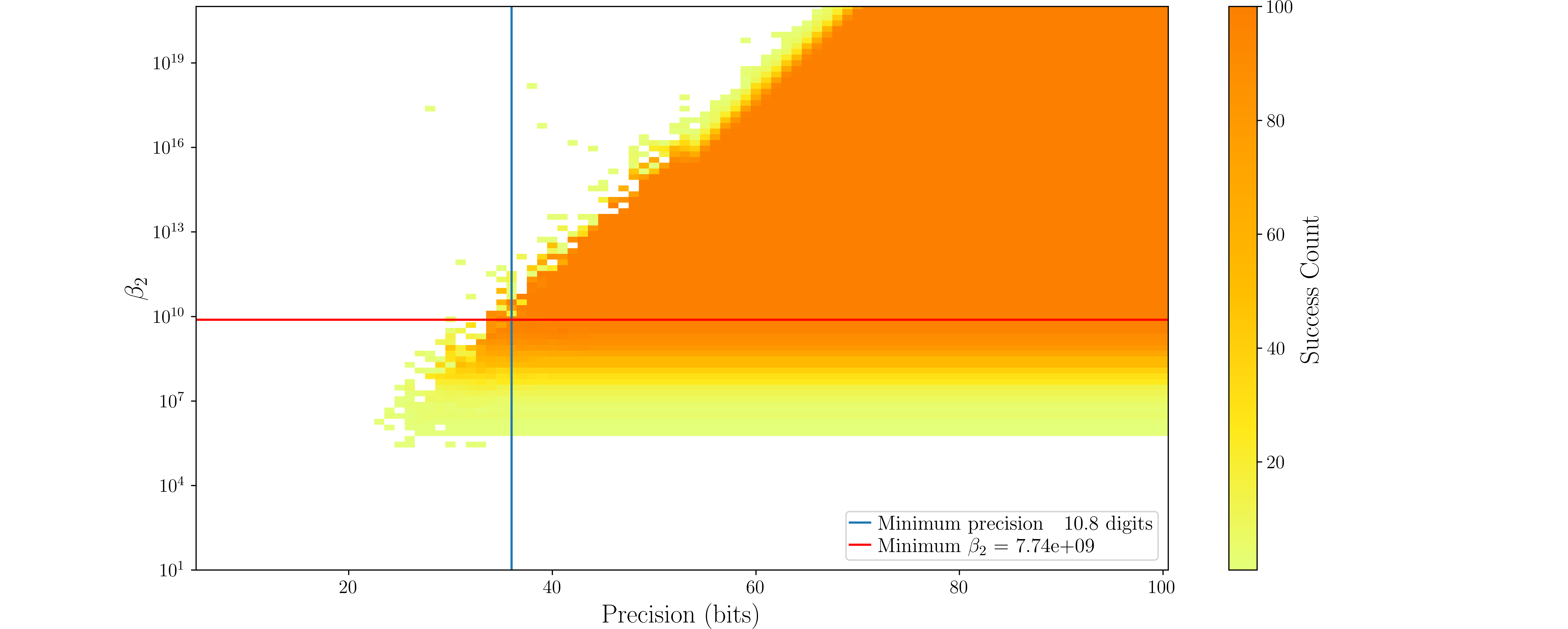}
    \vspace{-1.5pc}
    \caption{Recovery rates for 100 test signals of length $N=25$ with entries distributed as $\binomdist{25}{0.5}$, recovered using $J=1$.  For each data precision ad $\beta_2$ value, the percentage of successful recoveries is shown. The blue and red lines mark the first data precision and $\beta_2$ values, respectively, for which all 100 test signals were recovered successfully.}
    \label{fig:prec}
\end{figure}

This plot also exhibits a triangular region of unsuccessful recovery above both of these minimal threshold values.
In this regime, the input data contain sufficient precision to allow stable recovery of ${\bm \ell}^*$,
but $\beta_2$ is set too large relative to that precision. 
To understand why this leads to recovery failure,
let $\tilde{x}'_k$ be the measured value of $\tilde{x}_k$
with measurement error $\epsilon \coloneqq  \abs{\tilde{x}_k - \tilde{x}_k'}$. 
If $\tilde{x}_k'$ replaces $\tilde{x}_k$ in the lattice basis in \cref{eq:lattice_basis},
the desired lattice vector ${\bm \ell}^* = {\tt B}\coeffs$ will have a nonzero ${\tt B}_2$ block. 
In contrast to \cref{eq:shortest_vec}, 
the length of ${\bm \ell}^*$ now scales with $\beta_2$, as 
\begin{equation*}
    \norm{{\bm \ell}^*}^2
    = \norm{{\bf x} - \overline{\bf x}}^2 + \beta_0^2 + \beta_2^2\abs{\tilde{x}_k - \tilde{x}_k'}^2
    + \beta_0^2 + \beta_2^2\epsilon^2.
\end{equation*}
Furthermore,
the density of the cyclotomic integers in $\C$ implies there exist infinitely many vectors ${\bm \alpha} \in \Z^N$ satisfying
${\tt B}_1 \coeffsone = {\bf 0}$ and $\abs{\tilde{\alpha}_k - \tilde{x}_k'} < \epsilon$.
Therefore, as $\beta_2$ increases, we can eventually find an ${\bm \alpha}$ satisfying 
$\norm{{\tt B}\coeffsone} < \norm{{\bm \ell}^*}$.

To quantify this effect, consider any lattice vector ${\bm \ell} = {\tt B}\coeffs$ satisfying $\norm{{\bm \ell}} \le \norm{{\bm \ell}^*}$.  Since the contribution of the ${\tt B}_2$ block is bounded by the total vector norm,
\begin{equation*}
\beta_2^2\abs{\tilde{\alpha}_k - \gamma \tilde{x}_k}^2
    =\norm{{\bm \ell}^{({\tt B}_2)}}^2
    \le \norm{{\bm \ell}}^2
    = \norm{{\bm \alpha} - \gamma\overline{\bf x}}^2 + \beta_0^2 + \beta_2^2\epsilon^2,
\end{equation*}
we can bound the frequency space error of a recovered vector by,
\begin{equation*}
    \abs{\tilde{\alpha}_k - \gamma \tilde{x}_k}^2
    \le \frac{1}{\beta_2^2}\left(\norm{{\bm \alpha} - \gamma\overline{\bf x}}^2 + \beta_0^2\right) + \epsilon^2
    = \bigo{\beta_2^{-2} + \epsilon^2}.
\end{equation*}
This estimate reveals the tradeoff between $\beta_2$ and measurement precision.  Increasing $\beta_2$ forces short competing lattice vectors to agree more closely with the measured Fourier data, reducing the first error term. However, once $\beta_2^{-2}$ becomes comparable to $\epsilon^2$, further increases in $\beta_2$ primarily amplify the effects of measurement error rather than improving recovery.
We thus expect that the minimum $\beta_2$ value and precision required to recover a signal are related,
with the values possibly differing by a constant.
\Cref{fig:prec} supports this prediction:
the order of magnitude of the minimum value of $\beta_2$ required to recover all test signals,
$\log_{10}\beta_2 = 9.9$ is similar to the minimum number of digits required.

The next section focuses on estimating the minimum value of $\beta_2$ required to recover an integer signal.
When numerically testing these estimates,
we generally work with substantially higher precision than is required for successful reconstruction.
This places us in the regime well to the right of the blue line in \cref{fig:prec},
where picking a value of $\beta_2$ slightly larger than optimal will not impair recovery.
The behavior observed in \cref{fig:prec} suggests that the minimum useful value of $\beta_2$ is independent of elevated precision in this regime,
and should coincide with the optimal $\beta_2$ value when precision is actually limited.

\section{Parameter Analysis} \label{sec:theory}

\newcommand{\Rgamma}{\ensuremath{R_\gamma}}
\newcommand{\Rgammasq}{\ensuremath{R_\gamma^2}}

In this section, we analyze the lattice $\mathcal{L}$ with basis ${\tt B}$ given in \cref{eq:lattice_basis}, arising from \cref{alg:memo_1D}.  As discussed in \cref{sec:outline}, this formulation captures the essential geometric and probabilistic mechanisms underlying the lattice analysis.  The section concludes by discussing an approximation for the lattice \cref{eq:lattice_basis2} arising from \cref{alg:freq}, based on the preceding analysis.

Our goal is to estimate minimum values of the parameters $\beta_1$ and $\beta_2$ to ensure that ${\bm \ell}^*$ appears in the LLL reduced basis.
In practice, 
limited precision in the DFT measurements and algorithm runtime considerations prohibit arbitrarily large parameter values,
making precise estimates desirable.
We emphasize that this analysis is an estimate rather than a strict bound.
It provides guidance for parameter selection,
which may require additional tuning in practice.
The parameters $\beta_1$ and $\beta_2$ play the dominant role in ensuring that ${\bm \ell}^*$ appears in the reduced basis,
while the parameter $\beta_0$ plays a less critical role in recovery.
This analysis quantifies the dependence of $\beta_1$ and $\beta_2$ on $\beta_0$,
while the selection of $\beta_0$ is discussed in \cref{sec:beta0}.

\subsection{Estimating \texorpdfstring{$\beta_{1}$}{beta1}}
\label{sec:beta1}

We begin by analyzing $\beta_1$ to determine a value that guarantees that any vector ${\bm \ell}$ which is shorter than ${\bm \ell}^*$ must satisfy ${\bm \ell}^{({\tt B}_1)} = {\bf 0}$.
This reduces the number of lattice vectors which are shorter than ${\bm \ell}^*$,
thereby increasing the likelihood that ${\bm \ell}^*$ appears in the reduced basis.
Moreover,
it enforces structural constraints on the reduced lattice, as seen in \cref{sec:lattice_ex}.

Fix a vector ${\bm \ell} = {\tt B}\coeffs\in \mathcal{L}$ with $\norm{{\bm \ell}} \le \norm{{\bm \ell}^*}$ and chose any prime divisor $p$ of $N$.
Considering only the components $\bm{\ell}^{({\tt B}_0)}$ and one block $\bm{\ell}^{(p)}$ in $\bm{\ell}^{({\tt B}_1)}$ from \cref{eq:ellP_def} gives a trivial lower bound on the length $\norm{{\bm \ell}}$ by
\begin{equation}
\label{eq:beta1_first}
    \abs{\gamma\beta_0}^2 + \beta_1^2\norm{{\bm \alpha}^{(N/p)} - \gamma{\bf x}^{(N/p)}}^2= 
    \norm{{\bm \ell}^{({\tt B_0})}}^2 + \norm{{\bm \ell}^{(p)}}^2
    = \norm{\bm{\ell}^{({\tt B}_0)} \ \bm{\ell}^{(p)}}^2 
    \le \norm{\bm{\ell}}^2 \le  K^2 + \beta_0^2 .
\end{equation}
The final inequality in \cref{eq:beta1_first} comes from the length of ${\bm \ell}^*$ in \cref{eq:shortest_vec}.
Rearranging \cref{eq:beta1_first} yields, 
\begin{equation}
    \label{eq:dec_norm_bound}
    \beta_1^2\norm{{\bm \alpha}^{(N/p)} - \gamma{\bf x}^{(N/p)}}^2 \le K^2 + (1 - \abs{\gamma}^2)\beta_0^2 \le K^2 + \beta_0^2.
\end{equation}
Since all entries of ${\bm \alpha}^{(N/p)} - \gamma{\bf x}^{(N/p)}$ are integers, 
\cref{eq:dec_norm_bound} implies that choosing 
\begin{equation}
    \label{eq:beta1}
    \beta_1 > \sqrt{K^2 + \beta_0^2}
\end{equation} 
guarantees that$\norm{{\bm \alpha}^{(N/p)} - \gamma{\bf x}^{(N/p)}}^2 = 0$.
This choice of $\beta_1$ thus implies that ${\bm \alpha}^{(N/p)} = \gamma {\bf x}^{(N/p)}$.  
Since $p$ was arbitrary, 
\cref{eq:beta1} guarantees that any such ${\bm \alpha}$ will satisfy the ${\tt B}_1$ block constraints. 
In particular,
when $\abs{\gamma}=1$ as desired, 
the  vector ${\bm \alpha}$ will match the DFT data ${\tilde x}_k$ for all $k$ such that $\gcd(k, N) > 1$.

\subsection{Counting Short Vectors}\label{sec:rho}
Determining sufficient conditions on $\beta_2$ is considerably more difficult than for $\beta_1$ as the ${\tt B}_2$ block of the lattice does not consist of integer-valued entries.
The analysis that lead to \cref{eq:beta1} from \cref{eq:dec_norm_bound} relied on the fact that any nonzero integer vector has a norm of at least one,
so
forcing the contribution of the ${\tt B}_1$ block below this threshold guaranteed it was 0.
No analogous argument applies to the ${\tt B}_2$ block,
whose resulting norm can take arbitrarily small values.

Instead, 
we will first compute the number of vectors shorter than ${\bm \ell}^*$ in the lattice, 
\begin{equation*} 
	\rho(\beta_2) \coloneqq 
    \#\set{{\bm \ell} \in \mathcal{L} \given \norm{{\bm \ell}} \le \norm{{\bm \ell}^*}}, 
\end{equation*}
where it is implied that $\rho(\beta_2)$ depends on the choice of $\beta_0$, and  $\beta_1$ is assumed to satisfy  \cref{eq:beta1}. 
In this section, 
we will focus on the case when the guess \cref{eq:guess} is used,
while we look at alternate guess strategies in \cref{sec:guesses}.
Based on the structure of the reduced basis,
we will then apply an appropriate value of $\rho$ to solve for the corresponding value of $\beta_2$ required for successful recovery in \cref{sec:final_beta2}.

We begin by expanding the definition of $\rho(\beta_2)$ in terms of the lattice coefficients and the length of the desired vector ${\bm \ell}^*$ from \cref{eq:shortest_vec},
\begin{align} \label{eq:rho_expand}
	\rho(\beta_2) 
	=  \#\set{{\bm \ell} \in \mathcal{L} \given \norm{{\bm \ell}} \le \norm{{\bm \ell}^*}} 
	=  \#\set[\Big]{({\bm \alpha}, \gamma) \in\Z^{N+1} \given \norm[\big]{{\tt B}\coeffs}^2 \le K^2 + \beta_0^2 }.
\end{align}
Our first goal is to derive a bound on the possible values of ${\bm \alpha}$ by
showing that any choice of ${\bm \alpha}$ yielding a lattice vector shorter than ${\bm \ell}^*$ must belong to a specific bounded region.  

We can bound the length of 
$\bm{\ell} = {\tt B}\coeffs$
below by its components in 
$\bm{\ell}^{({\tt A})}$ and $\bm{\ell}^{({\tt B}_0)}$.
Therefore,
for $\norm{{\bm \ell}} \le \norm{{\bm \ell}^*}$,
we have
\begin{equation}
\label{eq:alpha_bound}
    \norm{{\bm \alpha}-\gamma\overline{\bf x}}^{2} + \abs{\gamma\beta_{0}}^{2}
    = \norm{[\bm{\ell}^{({\tt A})} \ \bm{\ell}^{({\tt B}_0)}]}^2 
    \le \norm{\bm{\ell}}^{2}
    \le K^{2}+\beta_{0}^{2} ,
\end{equation}
which implies that 
\begin{equation*}
     \norm{{\bm \alpha}-\gamma\overline{\bf x}} 
    \le R_\gamma \coloneqq \sqrt{K^2 + (1 -\gamma^2)\beta_{0}^{2}}.   
\end{equation*}
\Cref{eq:alpha_bound} bounds the distance from the coefficients ${\bm \alpha}$ to the scaled guess $\gamma{\bf \overline{x}}$ by \Rgamma{}.
Accordingly,
we define a feasible set of valid lattice coefficients for each $\gamma\ge 0$.
We consider the integer points $D_\gamma^\Z \coloneqq D_\gamma \cap \Z^N$, 
where $D_\gamma$ is defined by,
\begin{equation} \label{eq:D_gamma_def}
    D_\gamma \coloneqq
    \set{ 
        {\bm \alpha} \in \R^{N} 
        \given \norm{{\bm \alpha} - \gamma\overline{{\bf x}}} \le \Rgamma 
        \text{ and } 
        {\bm \alpha}^{(N/p)} = \gamma{\bf x}^{(N/p)} \text{ for each prime } p\divs N
    },
\end{equation}
where the condition on ${\bm \alpha}^{(N/p)}$ comes from \cref{eq:ellP_def} and the fact that ${\bm \ell}^{({\tt B}_1)} = {\bf 0}$.

The set $D_\gamma$ can be described as the intersection of an $N$-dimensional hyperball of radius \Rgamma{} centered at $\gamma\overline{{\bf x}}$,
with the hyperplanes
${\bm \alpha}^{(N/p)} = \gamma{\bf x}^{(N/p)}$ for all prime divisors $p$ of $N$.
These hyperplane constraints equivalently specify, 
by \cref{def:dec}, 
the $N - \phi(N)$ DFT coefficients $\tilde{\alpha}_k = \gamma\tilde{x}_k$ with  $\gcd(k, N) > 1$.
Therefore, 
for any fixed $\gamma$,
the solution space of the linear system obtained by combining the constraints
${\bm \alpha}^{(N/p)} = \gamma{\bf x}^{(N/p)}$ for all prime $p \divs N$ has dimension $N-\left(N-\phi(N)\right)=\phi(N)$.
Moreover, 
as the guess $\overline{{\bf x}}$ was also chosen to satisfy each of these $N-\phi(N)$ DFT coefficients,
the hyperplanes in \cref{eq:D_gamma_def} pass through the center $\gamma\overline{\bf x}$ of the $N$-dimensional hyperball. Therefore, the result of this intersection is a lower dimensional ball of the same radius $R_\gamma$.
For brevity,
we denote the dimension of the hyperball by
$\Phi \coloneqq \phi(N)$.

Additionally,
note that if 
\begin{equation}
\label{eq:gamma_max}
    \abs{\gamma} > \gamma_{\max} \coloneqq \floor*{ \sqrt{\frac{K^2}{\beta_0^2} + 1}},
\end{equation}
then $\Rgamma < 0$, implying that $D_\gamma = \emptyset$ for all such $\gamma$.  
Thus, the bound $\norm{{\bm \alpha} - \gamma\overline{\bf x}}\le \Rgamma$ implicitly also restricts feasible $\gamma$ values to a finite range.
The set in \cref{eq:rho_expand} can thus be refined to
\begin{equation}
    \label{eq:rho_eq2}
    \rho(\beta_2) 
    = \sum_{\abs{\gamma} \le \gamma_{\max}}\# \set*{{\bm \alpha}\in D_\gamma^\Z \given \norm[\big]{{\tt B}\coeffs}^2 \le K^2 + \beta_0^2 }.
\end{equation}
However, 
an exact count of $\rho(\beta_2)$ is still difficult. 
Instead, for each fixed $\gamma$, 
we introduce the probability space obtained by selecting ${\bm \alpha}$ uniformly at random from $D_\gamma^\Z$.  
Under this distribution, 
we have 
\begin{equation}
\label{eq:prob_intro}
    \# \set*{{\bm \alpha}\in D_\gamma^\Z \given \norm[\big]{{\tt B}\coeffs}^2 \le K^2 + \beta_0^2 }
    = \left(\#D_\gamma^\Z\right)\\ \cdot \ \bP\event[\Big]{\norm[\big]{{\tt B}\coeffs}^2 \le K^2 + \beta_0^2
    \given {\bm \alpha} \in D_\gamma^\Z}.
\end{equation}
This, it suffices to approximate the two terms on the right-hand side to obtain an estimate for $\rho(\beta_2)$.
We will first analyze the conditional probability term,
where we approximate the discrete uniform distribution over $D_\gamma^\Z$ by the continuous uniform distribution of $D_\gamma$,
\begin{equation} \label{eq:prob_cont}
    \bP\event*{\norm[\big]{{\tt B}\coeffs}^2 \le K^2 + \beta_0^2
    \given {\bm \alpha} \in D_\gamma^\Z}
    \approx \bP\event*{\norm[\big]{{\tt B}\coeffs}^2 \le K^2 + \beta_0^2 \given {\bm \alpha} \in D_\gamma} ,
\end{equation}
which is a reasonable approximation when $D_\gamma$ contains many lattice points.

As we are choosing $\beta_1$ sufficiently large to ensure that $\norm[\big]{{\tt B}_1\coeffs}=0$, 
expanding the norm of a generic lattice vector by its block structure yields,
\begin{equation}
\label{eq:vector_len}
   \norm[\big]{{\tt B}\coeffs}^2 
   = \norm{{\bm \alpha} - \gamma\overline{\bf x}}^2 + \gamma^2\beta_0^2 + \beta_2^2\norm[\big]{{\tt B}_2\coeffs}^2.
\end{equation}
The first term,
by the discrete Parseval relation,
is given by 
\begin{equation*}
    \norm{{\bm \alpha} - \gamma\overline{\bf x}}^2 = \frac{1}{N}\norm{\tilde{{\bm \alpha}} - \gamma\tilde{\overline{\bf x}}}^2 = \frac{1}{N}\sum_{k = 0}^{N - 1} \abs{\tilde{\alpha}_k - \gamma\tilde{\overline{x}}_k}^2 .
\end{equation*}
Using \cref{eq:B2_err} to rewrite the final term of \cref{eq:vector_len}, 
we can express the norm entirely in terms of the DFT of the coefficients,
\begin{equation} \label{eq:vector_len2}
   \norm[\big]{{\tt B}\coeffs}^2 
   =  \left(\frac{1}{N}\sum_{k = 0}^{N - 1} \abs{\tilde{\alpha}_k - \gamma\tilde{\overline{x}}_k}^2 + \beta_2^2\sum_{j = 1}^{J} \abs{\tilde{\alpha}_{k_j} - \gamma\tilde{x}_{k_j}}^2\right) + \gamma^2\beta_0^2 
   \eqqcolon f(\tilde{\bm \alpha}) + \gamma^2\beta_0^2.
\end{equation}
We thus express the probability term in \cref{eq:prob_cont} as 
\begin{equation} \label{eq:prob_f}
\begin{split}
    \bP\event[\Big]{\norm[\big]{{\tt B}\coeffs}^2 \le K^2 + \beta_0^2
    \given {\bm \alpha} \in D_\gamma}
    &= \bP\event[\Big]{f(\tilde{\bm \alpha})+\gamma^2\beta_0^2 \le K^2 + \beta_0^2
    \given {\bm \alpha} \in D_\gamma} \\
    &= \bP\event[\Big]{f( \tilde{\bm \alpha}) \le R_\gamma^2
    \given {\bm \alpha} \in D_\gamma}.
\end{split}
\end{equation}
Next,
we give a convenient reformulation of the 
uniform probability space over $D_\gamma$ in Fourier coordinates.
However,
as the DFT coefficients of a real-valued signal satisfy a conjugate symmetry relation,
we can impose this symmetry by first replacing the complex Fourier coefficients by an equivalent real-valued representation.
To this end,
we define a map $\widehat{\bf z}\colon\R^N\to\R^N$ by
\begin{equation} \label{eq:z_hat_def}
    \widehat{z}_{k}({\bm \alpha}) =
    \begin{cases}
        \tilde{\alpha}_0 & k = 0 \\
        \sqrt{2}\Re\tilde{\alpha}_k & 0 < k < N/2 \\
        \tilde{\alpha}_{N/2} & k = N/2 \\
        \sqrt{2}\Im\tilde{\alpha}_{N - k} & N/2 < k < N.
    \end{cases}
\end{equation}
Letting $\overline{\bf z} \coloneqq {\bf \widehat{z}}(\overline{\bf x})$, we can also define the shifted coordinates by the map ${\bf z}\colon\R^N\to \R^N$,
\begin{equation}
    \label{eq:z_def}
    {\bf z}(\bm \alpha) =  {\bf \widehat{z}}(\bm \alpha) - \gamma \overline{\bf z} .
\end{equation}
The following lemma demonstrates the utility of the ${\bf z}$ coordinate representation
by providing a simple characterization of the set of representatives,
\begin{equation*}
    F_\gamma 
    \coloneqq
    \set{{\bf z}({\bm \alpha}) \given {\bm \alpha} \in D_\gamma}.
\end{equation*}

\begin{lemma}
\label{lem:prob}
Let $\widehat{\bf z}$ and ${\bf z}$ be defined as in \cref{eq:z_hat_def,eq:z_def}.  Then, 
\begin{enumerate}
    \item The map $\widehat{\bf z}$ is a scaled orthogonal transformation satisfying $\norm{\widehat{\bf z}({\bf x})}=\sqrt{N}\norm{{\bf x}}$ for all ${\bf x}\in\R^N$.
    \item The map ${\bf z}$ is a scaled affine isometry that maps $D_\gamma$ onto the canonical embedding of a $\Phi$-dimensional ball of radius $\Rgamma\sqrt{N}$.
\end{enumerate}
\end{lemma}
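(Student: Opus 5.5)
For part (1), I would use linearity and the conjugate symmetry \cref{eq:conj_symmetry} of the DFT of a real vector. Since ${\bm \alpha}\mapsto\tilde{\bm \alpha}$ is linear and $\Re$, $\Im$ are $\R$-linear, $\widehat{\bf z}$ is a linear map $\R^N\to\R^N$. So it suffices to check the norm identity; polarization then gives $\langle\widehat{\bf z}({\bf x}),\widehat{\bf z}({\bf y})\rangle = N\langle{\bf x},{\bf y}\rangle$, and $N^{-1/2}\widehat{\bf z}$ is orthogonal. For real ${\bf x}$, the entries $\tilde x_0$ and (when $N$ is even) $\tilde x_{N/2}$ are real. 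For $0<k<N/2$ the pair $\{k,N-k\}$ contributes
\[
\abs{\tilde x_k}^2+\abs{\tilde x_{N-k}}^2 = 2\abs{\tilde x_k}^2 = (\sqrt2\Re\tilde x_k)^2+(\sqrt2\Im\tilde x_k)^2 .
\]
These are exactly the two coordinates $\widehat z_k$ and $\widehat z_{N-k}$ (reindexing the case $N/2<k<N$ via $N-k$). Summing over $k$ gives $\norm{\widehat{\bf z}({\bf x})}^2=\norm{\tilde{\bf x}}^2=N\norm{{\bf x}}^2$ by Parseval. The only care needed is the bookkeeping of the index ranges for odd versus even $N$, so that each coordinate is used exactly once.

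For part (2), ${\bf z}({\bm \alpha})=\widehat{\bf z}({\bm \alpha}-\gamma\overline{\bf x})$ by linearity. So ${\bf z}$ is the translation by $-\gamma\overline{\bf x}$ followed by a scaled orthogonal map, i.e. a scaled affine isometry with factor $\sqrt N$. Under it the ball $\norm{{\bm \alpha}-\gamma\overline{\bf x}}\le\Rgamma$ maps onto the ball of radius $\sqrt N\Rgamma$ centered at the origin. It remains to identify the image of the affine constraints. As noted after \cref{eq:D_gamma_def}, by \cref{def:dec} the constraints ${\bm \alpha}^{(N/p)}=\gamma{\bf x}^{(N/p)}$ for all primes $p\divs N$ are equivalent to $\tilde\alpha_k=\gamma\tilde x_k$ for all $k$ with $\gcd(k,N)>1$. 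Since $\overline{\bf x}$ satisfies $\tilde{\overline x}_k=\tilde x_k$ on these frequencies by \cref{eq:guess}, this is the condition $\tilde\alpha_k-\gamma\tilde{\overline x}_k=0$ for $\gcd(k,N)>1$. Because $\gcd(k,N)=\gcd(N-k,N)$, this set of frequencies is closed under $k\mapsto N-k$. Hence in ${\bf z}$-coordinates the constraints say exactly that $z_j=0$ for every coordinate index $j$ built from such a pair. Here $j=k$ or $j=N-k$, and the indices $0$ and $N/2$ are always included when $N>2$.

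The image $F_\gamma$ is therefore the ball of radius $\sqrt N\Rgamma$ intersected with the coordinate subspace where all coordinates indexed by $\{j:\gcd(j,N)>1\}$ vanish. The remaining free coordinates are indexed by $\{j:\gcd(j,N)=1\}$, since the map $k\mapsto N-k$ preserves coprimality, so exactly $\Phi$ coordinates remain. This is the canonical embedding of a $\Phi$-dimensional ball of that radius, and surjectivity follows since ${\bf z}$ is a bijection of $\R^N$. The main obstacle I expect is this index matching: verifying that the real/imaginary splitting in \cref{eq:z_hat_def} sends the frequency constraints onto whole coordinate directions, which rests on the symmetry of $\gcd(k,N)>1$ under $k\mapsto N-k$. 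The degenerate cases $N\le2$ should be checked separately.
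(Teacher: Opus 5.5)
Your proposal is correct and follows essentially the same route as the paper. Part (1) uses Parseval plus conjugate symmetry to get the scaled orthogonality of $\widehat{\bf z}$. Part (2) rewrites the hyperplane constraints as $\tilde\alpha_k=\gamma\tilde x_k$ for $\gcd(k,N)>1$, uses that the guess matches these coefficients, and concludes that they become the vanishing coordinates $z_k=z_{N-k}=0$. Your observation that $\{k:\gcd(k,N)>1\}$ is closed under $k\mapsto N-k$, and your surjectivity remark via the bijectivity of ${\bf z}$, only make explicit what the paper's proof uses implicitly.
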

\begin{proof}
We first show the $\widehat{\bf z}$ is an orthogonal transformation. 
Let ${\bm \alpha}\in\R^N$, and without loss of generality,
assume $N$ is even (otherwise, simply omit the index $N/2$ term). 
Since ${\bm \alpha}$ is real-valued,
its DFT satisfies the conjugate-symmetry condition $\tilde{\alpha}_{N-k}=\tilde{\alpha}^*_k$. 
By first using the discrete Parseval relation and then applying conjugate symmetry, 
we have 
\begin{align}
    N\norm{{\bm \alpha}}^2 
    = \norm{\tilde{\bm \alpha}}^2
    = \abs{\tilde{\alpha}_0}^2 + \abs{\tilde{\alpha}_{N/2}}^2 + \sum_{k=1}^{N/2-1}\left( \abs{\tilde{\alpha}_k}^2   + \abs{\tilde{\alpha}_{N-k}}^2\right) 
    = \abs{\tilde{\alpha}_0}^2 + \abs{\tilde{\alpha}_{N/2}}^2 + 2\sum_{k=1}^{N/2-1}\abs{\tilde{\alpha}_{k}}^2 .
\end{align}
By expanding the absolute value of $\tilde{\alpha}_k$ into real and imaginary parts and using the definition \cref{eq:z_hat_def} of $\widehat{\bf z}$,
we then obtain
\begin{align}
    N\norm{{\bm \alpha}}^2 
    &= \abs{\tilde{\alpha}_0}^2 + \abs{\tilde{\alpha}_{N/2}^2} + 2\sum_{k=1}^{N/2-1}\left(\abs{\Re\tilde{\alpha}_k}^2+\abs{\Im\tilde{\alpha}_k}^2\right) \\
    &= \abs{\widehat{z}_0({\bm \alpha})}^2 + \abs{\widehat{z}_{N/2}({\bm \alpha})}^2 + \sum_{k=1}^{N/2-1}\abs{\widehat{z}_k({\bm \alpha})}^2 + \sum_{k=N/2+1}^{N-1}\abs{\widehat{z}_k({\bm \alpha})}^2 
    = \norm{\widehat{\bf z}({\bm \alpha})}^2 , 
\end{align}
as desired.  
Moreover,
as the DFT is linear,
so is $\widehat{\bf z}$.

For the second statement,
since $\widehat{\bf z}$ is orthogonal and ${\bf z}$ differs from $\widehat{\bf z}$ only by a translation,
${\bf z}$ is a scaled affine isometry.  
If ${\bm \alpha} \in D_\gamma$,
the orthogonality of $\widehat{\bf z}$ also implies that
\begin{equation}
    \label{eq:cond1}
    \norm{{\bf z}({\bm \alpha})}
    = \norm{{\widehat{\bf z}({\bm \alpha} - \gamma\overline{\bf x}})}
    = \sqrt{N} \norm{{\bm \alpha} - \gamma\overline{\bf x}} ,
\end{equation}
so $\norm{{\bf z}({\bm \alpha})} \le \Rgamma\sqrt{N}$ if and only if $\norm{{\bm \alpha} - \gamma\overline{\bf x}} \le \Rgamma$.
Additionally,
\cref{def:dec} implies that the $D_\gamma$ hyperplane conditions ${\bm \alpha}^{(N/p)} = \gamma{\bf x}^{(N/p)}$ for each prime $p\divs N$ is equivalent to 
\begin{equation} \label{eq:freq_dec_cond}
    \tilde{\alpha}_k = \gamma\tilde{x}_k \text{ for each } \gcd(k, N) > 1.
\end{equation}
We can observe that ${\bf z}({\bm \alpha})$ inherits the $N - \Phi$ coordinate constraints from \cref{eq:freq_dec_cond}.
If $\gcd(k,N)>1$,
then for $0<k<N/2$ we have
\begin{equation*}
    z_k({\bm \alpha}) + \cu z_{N-k}({\bm \alpha}) 
    = \sqrt{2}\left(\tilde{\alpha}_{k}-\gamma\tilde{x}_{k}\right) = 0 
    \qquad \text{for } 0 < k < N / 2,
\end{equation*}
which fixes both $z_k({\bm \alpha})$ and $z_{N-k}({\bm \alpha})$ to 0.
Note that we also have $z_0({\bm\alpha}) = \tilde{\alpha}_0 -\gamma\tilde{x}_0=0$ 
and
$z_{N/2}({\bm \alpha}) = \tilde{\alpha}_{N/2} -\gamma\tilde{x}_{N/2}=0$. 
Thus
the set of these representatives,
\begin{equation*}
    F_\gamma 
    =
    \set{{\bf z}({\bm \alpha}) \given {\bm \alpha} \in D_\gamma} = \set{ {\bf z} \in \R^N \given \norm{{\bf z}} \le \Rgamma\sqrt{N} \text{ and } z_k = 0 \text{ if } \gcd(k, N) > 1} ,
\end{equation*}
is a $\Phi$-dimensional ball of radius $\Rgamma\sqrt{N}$ centered at the origin.
\end{proof}

As seen in the proof of \cref{lem:prob},
under the map ${\bf z}$,
the hyperplane constraints defining $D_\gamma$ become coordinate constraints.  Furthermore, ${\bf z}$ is a scaled affine isometry, so it preservs Euclidean geometry up to a constant scaling factor.  Therefore,  
${\bf z}$ maps the uniform distribution on $D_\gamma$ to the uniform distribution on the $\Phi$ unfixed coordinates of the $\Phi$-dimensional ball $F_\gamma$.

We now rewrite $f(\tilde{\bm \alpha})$ in the ${\bf z}$-coordinates. 
As in \cref{eq:cond1},
the scaled orthogonality of $\widehat{\bf z}$ gives,
$\norm{{\bm \alpha} - \gamma\overline{\bf x}} = 
\frac{1}{\sqrt{N}}\norm{{\bf z}({\bm \alpha})}$.
Moreover, 
for each sampled frequency $k_j$, we have 
\begin{equation} \label{eq:alpha_kj_to_z}
    2\abs{\tilde{\alpha}_{k_j}-\gamma\tilde{x}_{k_j}}^2
    = 2(\Re \tilde{\alpha}_{k_j} - \gamma \Re \tilde{x}_{k_j})^2 
    + 2 (\Im \tilde{\alpha}_{k_j} - \gamma \Im \tilde{x}_{k_j})^2
    = z_{k_j}^2+z_{N-k_j}^2 ,  
\end{equation}
where the factor of 2 comes from the $\sqrt{2}$-scaling in the definition of ${\bf z}$ 
and we used the fact that $\widehat{z}_{k_j}(\overline{\bf x}) = \Re\tilde{x}_{k_j}$
and $\widehat{z}_{N - k_j}(\overline{\bf x}) = \Im\tilde{x}_{k_j}$ from \cref{eq:guess}.
Now let ${\tt P}$ be the orthogonal projection matrix onto the subspace corresponding to the measured DFT coefficients in ${\bf z}$-coordinates, which can be defined entrywise by
\begin{equation}
\label{eq:projection}
    P_{mn} = \begin{cases}
        1 & m = n \in \set{k_1, \dots, k_J} \cup \set{N-k_1,\dots,N-k_J} \\
        0 & \text{otherwise} .
    \end{cases} 
\end{equation}
As ${\tt P}{\bf z}$ selects only the entries of ${\bf z}$ occurring in \cref{eq:alpha_kj_to_z},
we may rewrite the ${\tt B}_2$ block term from \cref{eq:vector_len2} as,
\begin{equation*}
    \sum_{j=1}^J \abs{\tilde{\alpha}_{k_j}-\gamma\tilde{x}_{k_j}}^2
    = \frac{1}{2}\sum_{j=1}^J\left(z_{k_j}^2+z_{N-{k_j}}^2\right) 
    =\frac{1}{2}\norm{{\tt P}{\bf z}}^2.
\end{equation*}
Therefore, we can equivalently express our random variable $f(\tilde{\bm \alpha})$ in ${\bf z}$-coordinates by,
\begin{align}
    Q({\bf z})
    \coloneqq 
    \frac{\beta_2^2}{2}\norm{{\tt P}{\bf z}}^2
    + \frac{1}{N}\norm{{\bf z}}^2 .
\end{align}
Since ${\bf z}$ maps 
the uniform distribution on $D_\gamma$ to the uniform distribution on $F_\gamma$,
the probability from \cref{eq:prob_f} becomes,
\begin{equation} \label{eq:prob_Q}
    \bP\event[\Big]{f( \tilde{\bm \alpha}) \le \Rgammasq
    \given {\bm \alpha} \in D_\gamma}
    = \bP\event[\Big]{Q({\bf z}) \le \Rgammasq
    \given {\bf z} \in F_\gamma}.
\end{equation}

For ${\bf z} \in F_\gamma$,
$\frac{{\bf z}}{\norm{\bf z}}$ is distributed uniformly on the $\Phi$-dimensional unit sphere~\cite[\getcrefname{theorem}~1.5.6]{muirhead1982aspects}.
In general,
for a vector sampled uniformly from the $n$-dimensional sphere,
the distribution of a fixed number of coordinates converges to a scaled standard normal as the dimension $n$ goes to $\infty$~\cite{stam1982limit}.
We will apply this result to approximate the distribution of $Q({\bf z})$.
In particular,
${\tt P}\frac{{\bf z}}{\norm{\bf z}}$ selects a fixed number ($2J$) of coordinates of a uniform $\Phi$-dimensional sphere sample.
Thus,
for a reasonably large $\Phi \gg J$,
the normal distribution limit approximates,
\begin{equation}
\label{eq:CLT_sphere}
    \sqrt{\Phi}{\tt P}\frac{{\bf z}}{\norm{{\bf z}}} \approx \normaldist{{\bf 0}}{{\tt P}}. 
\end{equation}
We now define
$r\coloneqq\norm{{\bf z}}$ for ${\bf z}$ distributed uniformly in $F_\gamma$,
which is independent of $\frac{{\bf z}}{\norm{{\bf z}}} = \frac{{\bf z}}{r}$~\cite[\getcrefname{theorem}~1.5.6]{muirhead1982aspects}.
To use the approximation in \cref{eq:CLT_sphere}, 
we can express our random variable $Q$ as,
\begin{equation*} 
    Q({\bf z})= r^2 \left(\frac{\beta_2^2}{2}\norm*{\frac{{\tt P}{\bf z}}{r}}^2+\frac{1}{N} \right) .
\end{equation*}
The independence of $r$ and ${\bf z}/r$ 
allows us to condition on $r$ and later introduce its distribution through an expected value.
The squared 2-norm of a vector of $2J$ independent standard normal random variables follows a $\chi^2$ distribution,
$\norm{\normaldist{{\bf 0}}{{\tt P}}}^2 \sim \chisqdist{2J}$,
which gives the following approximate distribution for our quadratic form,
\begin{equation} \label{eq:as_chi2}
    \norm*{\frac{{\tt P}{\bf z}}{r}}^2 \approx 
    \frac{1}{\Phi}\norm{\normaldist{{\bf 0}}{{\tt P}}}^2
    = \frac{1}{\Phi}\chisqdist{2J} .
\end{equation}
Using the model in \cref{eq:as_chi2}, 
we can compute the probability in \cref{eq:prob_Q} by,
\begin{equation} \label{eq:new_prob}
     \bP\event{Q({\bf z}) \le \Rgammasq }
     \approx \bP\event*{\frac{r^2\beta_2^2}{2\Phi}\chisqdist{2J} \le \Rgammasq-\frac{r^2}{N} } .
\end{equation}
For even degrees of freedom,
the $\chi^2$ distribution has CDF,  
\begin{equation} \label{eq:chi2_cdf}
    \bP\event{\chisqdist{2n} \le x} 
    = 1 - \exp\left(-\frac{x}{2}\right) \sum_{j = 0}^{n - 1} \frac{(x/2)^{j}}{j!}
    \approx \frac{(x/2)^n}{n!},
\end{equation}
where the last expression is a first-order approximation for small $x$~\cite[6.5.13,6.5.4,6.5.29]{Abramowitz1965}.
We will
apply the approximate $\chisqdist{2J}$ CDF in \cref{eq:chi2_cdf} to the probability in \cref{eq:new_prob},
which is valid as we expect $\beta_2$ to be large.
As $r \ge 0$ and ${\tt P}{\bf z}/r$ are independent,
conditioning on $r$ yields
\begin{equation} \label{eq:jensen}
     \bP\event{Q({\bf z}) \le \Rgammasq}
     = \bE_r\event[\Big]{\bP \event{Q({\bf z}) \le \Rgammasq \given r} }
     \approx \bE_{r}\event*{\frac{1}{J!}\left( \frac{\Phi\left(N\Rgammasq-r^2\right)}{Nr^2\beta_2^2}\right)^J}
\end{equation}
We now approximate the remaining dependence of the radial variable by evaluating the expression in \cref{eq:jensen} at the mean square radius.
A straightforward calculation shows that the expected value of $r^2$ in a $\Phi$-dimensional ball of radius $\Rgamma\sqrt{N}$ is given by,
\begin{equation} \label{eq:E_r2}
    \bE\event{r^2}
    =\frac{\Phi}{\Phi+2}R_\gamma^2N,
\end{equation}
which also comes from a later, more general computation in \cref{eq:rsq_moments}.
Applying this last approximation and substituting \cref{eq:E_r2} yields our final estimate for the probability in \cref{eq:prob_intro},
\begin{align}
    \bP\event{Q({\bf z}) \le \Rgammasq }
    \approx \frac{1}{J!}\left( \frac{\Phi\left(N\Rgammasq-\bE\event{ r^2}\right)}{N\beta_2^2\bE\event{r^2}}\right)^J 
    &= \frac{1}{J!}\left( \frac{\Phi N\Rgammasq\left(1-\frac{\Phi}{\Phi+2}\right)}{N^2R_\gamma^2\beta_2^2\frac{\Phi}{\Phi+2}}\right)^J
    = \frac{1}{J!}\left( \frac{2}{N\beta_2^2}\right)^J \label{eq:prob_final}.
\end{align}

With this expression for the probability,
we turn to estimating the second term in \cref{eq:prob_intro},
$\#D_\gamma^\Z$.
We handle this with the commonly used Gaussian Heuristic,
which approximates the number of lattice points in some measurable region by the ratio of the volume of that region to the determinant of the lattice~\cite{Nguyen2010hermite,gama2010enum}.
Intuitively,
as the determinant is the volume of the  fundamental region of the lattice,
it represents how much space each lattice point occupies.
Thus,
dividing the total volume of $D_\gamma$ by this quantity reasonably estimates how many integer lattice points lie inside $D_\gamma$.
As previously discussed,
$D_\gamma$ is a $\Phi$-dimensional hypersphere,
whose volume can be given in terms of the unit ball volume $V_n$,
\begin{equation*} 
	{\rm vol}(D_\gamma) = V_\Phi \Rgamma^\Phi,
	\qquad V_n = \frac{\pi^{n/2}}{\Gamma(\frac{n}{2} + 1)} 
    = \frac{\pi^\frac{n}{2}}{(n/2)!}.
\end{equation*}
In this equation,
we used the fact that $\Phi=\phi(N)$ is even (for $N > 2$) to simplify the $\Gamma$ function in the volume of a unit $n$-ball.

We defined the coefficient set $D_\gamma^\Z$ as the intersection of the standard integer lattice $\Z^N$ with $D_\gamma$.
However, 
as $D_\gamma^\Z$ lives in a lower-dimensional affine subspace of $\R^N$,
we must consider the fundamental volume of the intersection between $\Z^N$ and this subspace to derive an accurate count with the Gaussian Heuristic.
The volume of this fundamental region is invariant under the affine shift,
so,
without loss of generality,
we work with the subspace $\set{{\bm \alpha} \in \R^N \given \text{each }{\bm \alpha}^{(N/p)} = {\bf 0}}$ associated with $D_0$.
We denote the determinant of this lattice by $d_N$,
giving the approximation,
\begin{equation} \label{eq:approx_DgammaZ}
    \#D_\gamma^\Z\approx \frac{V_\Phi\Rgamma^\Phi}{d_N}.
\end{equation}
The following lemma provides an explicit formula for $d_N$. The proof of the lemma is deferred to \Cref{ap:det}, where we explicitly construct a basis for the relevant lattice that allows for a direct computation of the determinant from \cref{eq:lattice_det}.
\begin{lemma}
\label{lem:det}
	Fix an integer $N$ and consider the matrix
	\begin{equation} \label{eq:A_def}
		{\tt A} = \begin{bmatrix}
    		{\tt I}_{N/p_{1}} & \cdots & {\tt I}_{N/p_{1}} \\ 
    		\vdots & \ddots & \vdots \\
    		{\tt I}_{N/p_{\omega}} & \cdots & {\tt I}_{N/p_{\omega}} \\ 
		\end{bmatrix}
	\end{equation}
	The lattice $\mathcal{K} = \ker({\tt A}) \cap \Z^{N}$ has determinant 
	\begin{equation*} 
		d_N \coloneqq
		\det \mathcal{K} 
		= \left(\prod_{\text{prime } p \mid N} p^{\frac{\phi(N)}{p-1}}\right)^{1/2}
	\end{equation*}
\end{lemma}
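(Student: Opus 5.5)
The plan is to translate everything into polynomial language. Identify $\Z^N$ with $R \coloneqq \Z[x]/(x^N-1)$ via ${\bm\alpha}\mapsto \alpha(x)=\sum_n \alpha_n x^n$, so that $\tilde\alpha_k=\alpha(\uroot{N}{k})$. By \cref{def:dec}, ${\bm\alpha}^{(N/p)}={\bf 0}$ means $\alpha(x)\equiv 0 \pmod{x^{N/p}-1}$. Write $\Phi_N$ for the $N$th cyclotomic polynomial and $\Psi_N(x)\coloneqq (x^N-1)/\Phi_N(x)=\prod_{d\divs N,\, d<N}\Phi_d(x)$.

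\textbf{Step 1: describe $\mathcal K$ and its complement.} Every proper divisor $d$ of $N$ divides some $N/p$. Hence $\ker {\tt A}$ consists of those $\alpha$ divisible by $\operatorname{lcm}_p(x^{N/p}-1)=\Psi_N$. Since $\Psi_N$ is monic, Gauss's lemma shows this divisibility holds over $\Z$ for integer $\alpha$. So $\mathcal K=\Psi_N R$, with basis $x^j\Psi_N(x)$ for $0\le j<\Phi$; this confirms the rank is $\Phi$. Similarly, set $\mathcal M\coloneqq \Phi_N R$. Its elements are exactly the integer vectors whose DFT vanishes at all $k$ with $\gcd(k,N)=1$. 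By Parseval, $\mathcal M$ and $\mathcal K$ are orthogonal, because their Fourier supports are disjoint. Moreover $\mathcal M=(\ker{\tt A})^\perp\cap\Z^N$ and $\mathcal K=\ker{\tt A}\cap\Z^N$ are both primitive (saturated) sublattices.

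\textbf{Step 2: reduce to an index computation.} I will use the standard fact that a primitive sublattice of the unimodular lattice $\Z^N$ and its orthogonal primitive complement have equal determinant. This gives $\det\mathcal K=\det\mathcal M$. Since $\mathcal K\perp\mathcal M$ and together they have full rank,
\begin{equation*}
\det\mathcal K\cdot\det\mathcal M=[\Z^N:\mathcal K\oplus\mathcal M],
\end{equation*}
and therefore $d_N^2=[R:(\Psi_N,\Phi_N)]=\#\,\Z[x]/(\Phi_N,\Psi_N)$. Because $\Phi_N$ is monic, $\Z[x]/(\Phi_N)$ is free of rank $\Phi$. The quotient by the image of $\Psi_N$ then has order $\abs{\det(\text{multiplication by }\Psi_N)}=\abs{\operatorname{Res}(\Phi_N,\Psi_N)}$. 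Multiplicativity of the resultant gives
\begin{equation*}
\operatorname{Res}(\Phi_N,\Psi_N)=\prod_{d\divs N,\,d<N}\operatorname{Res}(\Phi_N,\Phi_d).
\end{equation*}

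\textbf{Step 3: evaluate the resultants.} I will invoke the classical result (Apostol) that $\abs{\operatorname{Res}(\Phi_N,\Phi_d)}=p^{\phi(d)}$ when $N/d$ is a power of a prime $p$, and $1$ otherwise. Fix a prime $p$ with $p^a\,\|\,N$ and write $N=p^a m$. The divisors contributing a power of $p$ are $d=N/p^k$ for $1\le k\le a$, so the exponent of $p$ is
\begin{equation*}
\sum_{k=1}^a\phi(p^{a-k}m)=\phi(m)\sum_{j=0}^{a-1}\phi(p^j)=\phi(m)p^{a-1}=\frac{\phi(N)}{p-1}.
\end{equation*}
Taking square roots gives $d_N=\bigl(\prod_{p\divs N}p^{\phi(N)/(p-1)}\bigr)^{1/2}$.

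The main obstacle is Step 2: justifying $\det\mathcal K=\det\mathcal M$ and the index identity cleanly. If citing the complement-determinant fact feels too heavy, I would instead directly compute $\det({\tt G})$ for the Gram matrix of the basis $x^j\Psi_N$. Using the DFT, this Gram matrix becomes a product of Vandermonde-type factors $\prod_{\gcd(k,N)=1}\abs{\Psi_N(\uroot{N}{k})}^2$ divided by a discriminant-like term. That route seems messier, so the resultant route is my first choice.
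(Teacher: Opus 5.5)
Your proof is correct, and it takes a genuinely different route from the paper's. The paper stays in coordinates. It reorders indices by the Chinese remainder map, so that each constraint block becomes ${\tt I}\otimes(\mathbf{1}_{1\times p_r}\otimes{\tt I}_{p_r^{\alpha_r-1}})\otimes{\tt I}$. It then writes down an explicit Kronecker-product basis built from the difference vectors ${\bf e}_\ell-{\bf e}_{\ell-1}$. The determinant comes from the tridiagonal Gram factors $\det{\tt C}_{p_r-1}=p_r$ together with the identity $\det({\tt A}\otimes{\tt B})=\det({\tt A})^p\det({\tt B})^m$.

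You instead pass to $\Z[x]/(x^N-1)$, where $\mathcal K=\Psi_N R$ has the evident $\Z$-basis $x^j\Psi_N$. You then compute
$d_N^2=\det\mathcal K\cdot\det\mathcal M=[\Z^N:\mathcal K\oplus\mathcal M]=\abs{\operatorname{Res}(\Phi_N,\Psi_N)}$
and factor the resultant over the proper divisors of $N$. The individual steps all hold: orthogonality via disjoint Fourier support, primitivity of both lattices, the identification of the index with $\#\Z[x]/(\Phi_N,\Psi_N)$, and the exponent count $\sum_{k=1}^{a}\phi(N/p^k)=\phi(N)/(p-1)$. As a spot check, for $N=6$ the Gram matrix of $\Psi_6,\,x\Psi_6$ has determinant $12$, which matches both routes.

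Your approach is short and conceptual. It ties $d_N$ to the cyclotomic structure the paper already exploits elsewhere, and it shows where each factor $p^{\phi(N)/(p-1)}$ comes from, namely the resultants $\operatorname{Res}(\Phi_N,\Phi_{N/p^k})$. It also makes the $\Z$-basis property immediate, since division by a monic polynomial stays in $\Z[x]$. By contrast, the paper only checks that its Kronecker columns are linearly independent vectors in the kernel. That they generate all of $\ker\cap\Z^N$, rather than a finite-index sublattice, is left implicit; it is true because each factor's column set extends to a unimodular matrix.

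The cost of your route is reliance on two cited results: the equality of determinants of a primitive sublattice of $\Z^N$ and its orthogonal primitive complement, and Apostol's resultant theorem. The paper's argument is self-contained linear algebra and produces an explicit, structured basis.

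One small point: Apostol's formula is usually stated for $m>n>1$. Treat the $d=1$ factor separately as $\abs{\Phi_N(1)}$, which equals $p$ when $N$ is a power of $p$ and $1$ otherwise. This agrees with $p^{\phi(1)}$, so your count is unaffected.
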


Finally,
by substituting \cref{eq:prob_final,eq:approx_DgammaZ} into \cref{eq:prob_intro}, we obtain our final approximation for $\rho(\beta_2)$ from \cref{eq:rho_eq2}, 
\begin{equation}
\label{eq:rho_final_guess}
    \rho(\beta_2) = \frac{V_\Phi}{d_NJ!}\left( \frac{2}{N\beta_2^2}\right)^J \sum_{\gamma = -\gamma_{\max}}^{\gamma_{\max}}R_\gamma^\Phi 
\end{equation}

\begin{remark}[Moments of $r$ and Jensen's Gap]
Jensen's inequality implies that the approximation in \cref{eq:jensen} is 
actually a lower bound,
as the function $\left(\frac{N^2\Rgammasq - r^2}{r^2}\right)^J$ is convex for $0 \le r \le \sqrt{N}\Rgamma$.
We expect that the gap is negligible for the small values of $J$
that are required to make the asymptotics in \cref{eq:CLT_sphere} accurate.
If one desires a more accurate estimate however,
we note that the expectation in \cref{eq:jensen} may be computed exactly by expanding the expected value argument with the binomial theorem,
\begin{align} \label{eq:binom}
    \bE\event*{\left(\frac{N\Rgammasq - r^2}{r^2}\right)^J}
    = \bE\event*{\left(\Rgammasq r^{-2} - N^{-1}\right)^J} 
    &= \sum_{j = 0}^{J} \binom{J}{j} \bE\event[\Big]{(\Rgammasq r^{-2})^j} (-N)^{j-J} \\
    &= \sum_{j = 0}^{J} \binom{J}{j} \frac{(-N)^{j-J}\Phi}{\Phi - 2j},
\end{align}
where the moments of $r$ were computed as follows.
The density $p(r)$ of $r$ must be proportional to the surface area of the $\Phi$-dimensional sphere of radius $r$,
which scales like $r^{\Phi - 1}$.
As $p$ integrates to 1,
we thus have $p(r) = \frac{r^{\Phi - 1}}{\Phi\Rgamma^\Phi}$.
Therefore,
if $2j < \Phi$,
which holds for all $j \le J$ provided we are in the undetermined setting of  $J < \frac{\Phi}{2}$,
\begin{equation} \label{eq:rsq_moments}
    \bE\event{r^{-2j}} 
    = \int_0^{\Rgamma} r^{-2j}p(r) \,dr
    = \frac{\Phi}{(\Phi - 2j)\Rgamma^{2j}}.
\end{equation}
We elected to use the simpler estimate in \cref{eq:jensen}, 
as it did not have a significant effect on our numerical results.
\end{remark}

\begin{remark}
As $\gamma_{\max} = \floor*{\sqrt{\frac{K^2}{\beta_0^2} + 1}}$,
the $2\gamma_{\max} + 1$ terms of the sum in \cref{eq:rho_final_guess} leads to undesirable scaling for small $\beta_0$ and large $K$.
However,
this can be avoided by applying Faulhaber's formula to obtain an equivalent sum with $\bigo{N^2}$ terms,
\begin{align}
    \sum_{\gamma = -\gamma_{\max}}^{\gamma_{\max}} \Rgamma^\Phi
    &= \beta_0^2\sum_{\gamma = -\gamma_{\max}}^{\gamma_{\max}} [(K^2/\beta_0^2+1) - \gamma^2]^{\Phi / 2} \notag{}\\
    &= \beta_0^2\sum_{\gamma = -\gamma_{\max}}^{\gamma_{\max}} \sum_{n=0}^{\Phi/2} \binom{\Phi/2}{n}\left(\frac{K^{2}}{\beta_{0}^{2}} + 1\right)^n(-1)^{\Phi/2-n}\gamma^{\Phi - 2n} \label{eq:binom_thm}\\
    &= \beta_0^2\sum_{n=0}^{\Phi/2} \binom{\Phi/2}{n}\left(\frac{K^{2}}{\beta_{0}^{2}} + 1\right)^n(-1)^{\Phi/2-n}\left(1 + \sum_{\gamma = 1}^{\gamma_{\max}}\gamma^{\Phi - 2n} \right) \label{eq:sum_swap} \\
    &= \beta_0^2\sum_{n=0}^{\Phi/2} \binom{\Phi/2}{n}\left(\frac{K^{2}}{\beta_{0}^{2}} + 1\right)^n(-1)^{\Phi/2-n}\sum_{m=0}^{\Phi - 2n} \binom{\Phi - 2n + 1}{m} B_m\gamma_{\max}^{\Phi - 2n + 1 - m}, \label{eq:faulhaber}
\end{align}
where the coefficients $B_m$ are the Bernoulli numbers.
In \cref{eq:binom_thm},
we apply the binomial theorem to expand the sum argument,
and in \cref{eq:sum_swap},
we swap the order of the sums,
isolating the $\gamma$ dependence in the interior sum.
Finally,
we use Faulhaber's formula in \cref{eq:faulhaber} to eliminate the $\gamma_{\max}$ scaling by replacing the sum over $\gamma$ by a sum with up to $\Phi$ terms~\cite{Graham1994-tk}.
While we will not use this more complicated formulation for our final $\beta_2$ expressions,
it is practical for computational applications in certain regimes  of $K$ and $\beta_0$.  
\end{remark}

\subsection{Different Guesses}
\label{sec:guesses}
The previous calculation of $\rho(\beta_2)$ assumed the guess $\overline{\bf x}$ incorporated all known DFT coefficients as in \cref{eq:guess}.  This section discusses how to modify the computation of $\rho(\beta_2)$,
when one of two alternative guesses is used in the lattice basis:
(1) the guess is constructed similarly to \cref{eq:guess},
but the known sampled frequencies with $\gcd(k, N) = 1$ are also set to 0,
and (2) the guess is set to $\overline{{\bf x}}={\bf 0}$.

\textbf{Case 1 (Modified Guess):} 
In this case,
we still use a non-zero guess,
but it does not incorporate every sampled coefficient.
To avoid confusion, we denote this modified guess by $\overline{{\bf x}}'$.  The modified guess
is defined in frequency space by
\begin{equation} \label{eq:guess_wo}
    \widetilde{\overline{x}}'_k
    = \begin{cases}
        0 & \gcd(k, N) = 1 \\
        \tilde{x}_k & \gcd(k, N) \ne 1,
    \end{cases}
\end{equation}
which differs from \cref{eq:guess} by ignoring the sampled DFT coefficients $\tilde{{x}}_{\pm k_j}$.
In this case,
we need to adapt the center and radii of our feasible sets of lattice coefficients for the new guess.
Therefore, 
we let
\begin{equation} \label{eq:D_gamma_prime_def}
    D_\gamma' \coloneqq
    \set{ 
        {\bm \alpha} \in \R^{N} 
        \given \norm{{\bm \alpha} - \gamma\overline{{\bf x}}'} \le \Rgamma' 
        \text{ and } 
        {\bm \alpha}^{(N/p)} = \gamma{\bf x}^{(N/p)} \text{ for each prime } p\divs N
    },
\end{equation}
with radii $\Rgamma'$ derived from the length of the desired lattice vector which now depends on the modified guess error,
\begin{equation*}
    R_\gamma' 
    \coloneqq \sqrt{\norm{{\bm \ell}^*}^2 - \beta_0^2\gamma^2}
    = \sqrt{ \norm{{\bf x} - \overline{\bf x}'}^2 + (1 - \gamma^2)\beta_0^2} .
\end{equation*}
In this case,
we want to estimate the probability,
\begin{equation}
\label{eq:prob_guess1}
    \bP\event[\bigg]{\beta_2^2\sum_{j = 1}^{J} \abs{\tilde{\alpha}_{k_j} - \gamma\tilde{x}_{k_j}}^2 + \norm{{\bm \alpha} - \gamma\overline{\bf x}'}^2 \le (R_\gamma')^2
    \given {\bm \alpha} \in D_\gamma'}.
\end{equation}
We will reuse the map $\widehat{\bf z}$ from \cref{eq:z_hat_def},
and define a new coordinate transformation ${\bf z}'$ that is appropriately shifted by the modified guess,
\begin{equation*} 
    {\bf z}'(\bm \alpha) 
    = {\bf \widehat{z}}(\bm \alpha) - \gamma \overline{\bf z}' ,
    \qquad
    \overline{\bf z}' 
    = {\bf \widehat{z}}(\overline{\bf x}').
\end{equation*}
Note that just as in \cref{lem:prob},
${\bf z}'$ is an affine isometry that maps $D_\gamma'$ to the canonical embedding of a $\Phi$-dimensional sphere,
as it still translates the constraints ${\bm \alpha}^{(N/p)} = \gamma{\bf x}^{(N/p)}$ to coordinate constraints.  
Likewise,
the new coordinate representation again transforms the uniform distribution on $D_\gamma'$ to the uniform distribution on $F_\gamma' = {\bf z}'(D_\gamma')$,
which is spherically symmetric.

The first term in $\cref{eq:prob_guess1}$ can be rewritten in ${\bf z}'$-coordinates as
\begin{equation} \label{eq:B2_to_z_prime}
\begin{split}
    \beta_2^2\sum_{j = 1}^{J} \abs{\tilde{\alpha}_{k_j} - \gamma\tilde{x}_{k_j}}^2 
    &= \beta_2^2\sum_{j = 1}^{J} \abs{\Re[\tilde{\alpha}_{k_j}] - \gamma\Re[\tilde{x}_{k_j}]}^2 + \abs{\Im[\tilde{\alpha}_{k_j}] - \gamma\Im[\tilde{x}_{k_j}]}^2 \\
    &= \frac{\beta_2^2}{2}\sum_{j=1}^J \left[\left(\widehat{z}_{k_j}({\bm \alpha}) - \gamma\widehat{z}_{k_j}({\bf x})\right)^2 + \left(\widehat{z}_{N-k_j}({\bm \alpha}) - \gamma\widehat{z}_{N-k_j}({\bf x})\right)^2\right]\\
    &= \frac{\beta_2^2}{2}\sum_{j=1}^J \left[\left(z'_{k_j}({\bm \alpha}) - \gamma \widehat{z}_{k_j}({\bf x})\right)^2 + \left(z'_{N-k_j}({\bm \alpha}) - \gamma \widehat{z}_{N-k_j}({\bf x})\right)^2\right],
\end{split}
\end{equation}
where the last line follows from ${ z}_{k_j}'({\bm \alpha}) = \widehat{z}_{k_j}({\bm \alpha})$,
as excluding $\tilde{x}_{k_j}$ from the guess in \cref{eq:guess_wo} implies $\widehat{z}_{k_j}(\overline{\bf x}')=0$. 
The scaled orthogonality of $\widehat{\bf z}$ from \cref{lem:prob} gives the following expression for the second term of \cref{eq:prob_guess1} in ${\bf z}'$ coordinates,
\begin{equation} \label{eq:A_to_z_prime}
    \norm{{\bm \alpha} - \gamma\overline{\bf x}'}^2 
    = \frac{1}{N}\norm{\widehat{\bf z}({\bm \alpha} - \gamma\overline{\bf x}')}
    = \frac{1}{N} \norm{{\bf z}'({\bm \alpha})}^2 . 
\end{equation}
Recalling the definition of the projection matrix ${\tt P}$ from \cref{eq:projection},
we apply \cref{eq:B2_to_z_prime,eq:A_to_z_prime} to rewrite the random variable in \cref{eq:prob_guess1} as
\begin{equation*}
   Q({\bf z}')
   \coloneqq 
   \frac{\beta_2^2}{2} \norm*{{\tt P}\left({\bf z}' - \gamma\widehat{\bf z}({\bf x})\right) }^2 + \frac{1}{N}\norm{{\bf z}'}^2,
\end{equation*}
to obtain the equivalent probability distribution
\begin{equation*}
    \bP\event[\bigg]{\beta_2^2\sum_{j = 1}^{J} \abs{\tilde{\alpha}_{k_j} - \gamma\tilde{x}_{k_j}}^2 + \norm{{\bm \alpha} - \gamma\overline{\bf x}'}^2 \le (R_\gamma')^2
    \given {\bm \alpha} \in D_\gamma'} 
    = \bP\event[\bigg]{Q({\bf z}')\le (\Rgamma')^2 \given {\bf z}' \in F_\gamma'}.
\end{equation*}
We can now adapt the asymptotic normal argument from the previous section,
accounting for one additional term.  
Letting $r\coloneqq\norm{{\bf z}'({\bm\alpha})}$ and ${\bf c}\coloneqq\gamma\widehat{\bf z}({\bm x})/r$,
we rewrite $Q$ as
\begin{equation*}
    Q({\bf z}') = r^2\left(\frac{\beta_2^2}{2}\norm*{\frac{{\tt P}{\bf z}'}{r} - {\tt P}{\bf c} }^2 + \frac{1}{N}\right) .
\end{equation*}
Again,
\cref{eq:CLT_sphere} gives the approximation, 
${\tt P}{\bf z}'/r\approx\normaldist{{\bf 0}}{{\tt P}/\Phi}$.
Therefore,
conditioning on $r$ gives,
\begin{equation*}
    \norm*{\frac{{\tt P}{\bf z}'}{r} - {\tt  P}{\bf c} }^2
    \approx \frac{1}{\Phi}\norm{\normaldist{{\bf 0}}{{\tt P}}-\sqrt{\Phi}{\tt P}{\bf c}}^2 
    = \frac{1}{\Phi}\norm{\normaldist{-\sqrt{\Phi}{\tt P}{\bf c}}{{\tt P}}}^2 
\end{equation*}
The two-norm of a vector of independent normal random variables with non-zero means follows 
a noncentral chi-squared distribution.
We thus have,
\begin{equation} \label{eq:ncx2}
    \norm{\normaldist{-\sqrt{\Phi}{\tt P}{\bf c}}{{\tt P}}}^2 
    \sim \ncxsqdist{2J}{\lambda_\gamma},
    \qquad
    \lambda_\gamma = \Phi\norm{{\tt P}{\bf c}}^2 
    = \frac{\Phi\gamma^2}{ r^2} \sum_{j=1}^J \abs{\tilde{x}_{k_j}}^2 ,
\end{equation}
where $\lambda_\gamma$ is the noncentrality parameter.

In general,
the $\chi'^2$ CDF does not have a closed form.
However,
the first-order approximation,
\begin{equation*}
    \bP\event{\ncxsqdist{2n}{\lambda}\le x} 
    \approx \exp\left( -\frac{\lambda}{2} \right) \frac{(x/2)^n}{n!}
\end{equation*}
holds for small arguments~\cite{baricz2021ncx2},
generalizing the approximation in \cref{eq:chi2_cdf} to non-zero $\lambda$ through a factor of $\exp(-\lambda/2)$. 
Therefore,
as $r$ and ${\tt P}{\bf z}'/r$ are independent,
we can approximate the probability as 
\begin{align}
    \bP\event*{Q({\bf z}')\le  \norm{{\bf x} - \overline{\bf x}'}^2 + (1 - \gamma^2)\beta_0^2}
    &\approx \bP\event*{\frac{r^2\beta_2^2}{2\Phi}\chi'^2(2J,\lambda_\gamma)\le  \Rgammasq-\frac{r^2}{N}}\\ 
    &\approx \bE_r\event*{\exp\left( -\frac{\lambda_\gamma}{2} \right)\frac{1}{J!}\left(\Phi\frac{ \Rgammasq-\frac{r^2}{N}}{r^2\beta_2^2} \right)^J} \\
    &\approx \exp\left( -\frac{\lambda_\gamma}{2} \right)\frac{1}{J!}\left(\frac{2}{N\beta_2^2} \right)^J,
\end{align}
where we again applied the expected value of $r^2$ from \cref{eq:E_r2} throughout the expression in the last line.  
This includes substituting the expected value of $r^2$ into the noncentrality parameter in \cref{eq:ncx2} to obtain  
\begin{equation*}
    \lambda_\gamma \approx  \frac{(\Phi+2)\gamma^2}{ \Rgammasq N} \sum_{j=1}^J \abs{\tilde{x}_{k_j}}^2 .
\end{equation*}
Substituting this probability into the analogous expression to \cref{eq:prob_intro}, we obtain 
our final expression for $\rho(\beta_2)$
\begin{equation} \label{eq:rho_final_guess_wo}
    \rho(\beta_2) 
    = \frac{V_\Phi}{J! d_N \beta_2^{2J}} \left( \frac{2}{N} \right)^J\sum_{\gamma = -\gamma'_{\max}}^{\gamma'_{\max}} (\Rgamma')^\Phi \exp\left( -\frac{\lambda_\gamma}{2} \right) , 
\end{equation}
where $\gamma'_{\max}$ is defined in terms of the modified guess
$\gamma'_{\max} = \floor[\big]{\sqrt{\norm{{\bf x}-\overline{\bf x}'}^2/\beta_0^2 + 1}}$.
\vspace{\baselineskip}

\textbf{Case 2 (No Guess):}
Next,
we consider the lattice basis with the guess set to a standard $\overline{{\bf x}}^{(0)} = {\bf 0}$.
In this case,
the desired lattice vector has length
$\norm{{\bm \ell}^*}^2 = \norm{{\bf x}}^2 + \beta_0^2$,
so for each $\gamma$,
the new set of feasible coefficients $D_\gamma^{(0)}$ is defined by 
\begin{equation}
\label{eq:D_gamma_0}
	D_\gamma^{(0)}
    \coloneqq \set{{\bm \alpha} \in \R^N \given \norm{{\bm \alpha}} \le \sqrt{\norm{{\bf x}}^2 + (1 - \gamma^2)\beta_0^2 } \text{ and } {\bm \alpha}^{(N/p)} = \gamma{\bf x}^{(N/p)} \text{ for each prime } p \divs N} . 
\end{equation}
Letting ${\tt A}$ be the matrix from \cref{eq:A_def} which stacks the linear constraints ${\bm \alpha}^{(N/p)} = \gamma{\bf x}^{(N/p)}$ for each prime $p \divs N$,
we have
\begin{equation}
\label{eq:A_system}
    {\tt A}{\bm \alpha} = \gamma {\tt A} {\bf x},
\end{equation}
for all ${\bm \alpha} \in D_\gamma^{(0)}$.
We write the corresponding affine solution space to the linear system in \cref{eq:A_system} as
\begin{equation*}
    H_\gamma \coloneqq \set{{\bm \alpha}\in\R^n \given {\tt A}{\bm \alpha} = \gamma {\tt A} {\bf x} } ,
\end{equation*}
and note that $D_\gamma^{(0)}\subset H_\gamma$.
We first note that, 
by the construction of the modified guess $\overline{\bf x}'$ in \cref{eq:guess_wo},
$\gamma\overline{\bf x}'$ is the unique least-norm solution to the linear system \cref{eq:A_system} for any fixed $\gamma$.
Equivalently, it is the unique element of $H_\gamma\cap\ker({\tt A})^\perp$.  We can thus rewrite any ${\bm \alpha}\in H_\gamma$ by
\begin{equation*}
    {\bm \alpha} = \gamma \overline{\bf x}' + ({\bm \alpha} - \gamma\overline{\bf x}') ,  
\end{equation*}
where the first term is in $\ker({\tt A})^\perp$ and the second term is in $\ker({\tt A})$.  As this is an orthogonal decomposition of ${\bm \alpha}$ we have 
\begin{equation*}
	\norm{{\bm \alpha}}^2
    = \norm{{\bm \alpha} - \gamma\overline{{\bf x}}'}^2 + \norm{\gamma\overline{{\bf x}}'}^2 .
\end{equation*}
This implies that the condition 
$\norm{{\bm \alpha}} \le \norm{{\bm \ell}^*}$ 
on the norm of ${\bm \alpha}$ 
in \cref{eq:D_gamma_0} is equivalent to the condition 
$\norm{{\bm \alpha} - \gamma\overline{\bf x}'}^2 \le \norm{{\bf x}}^2 + (1 - \gamma^2)\beta_0^2 - \norm{\overline{\bf x}'}^2$ on the distance from ${\bm \alpha}$ to the scaled modified guess.
Thus,
we can rewrite the feasible set $D_\gamma^{(0)}$ as
\begin{equation} \label{eq:D_gamma_0_better}
    D_\gamma^{(0)}
    = \set{{\bm \alpha} \in \R^N \given \norm{{\bm \alpha} - \gamma\overline{{\bf x}}'} \le R_\gamma^{(0)} \text{ and } {\bm \alpha}^{(N/p)} = \gamma{\bf x}^{(N/p)} \text{ for each prime } p \divs N} ,
\end{equation}
for the radius $R_\gamma^{(0)}$ defined by
\begin{equation*}
    R_\gamma^{(0)} = \sqrt{\norm{{\bf x}}^2 + (1-\gamma^2)\beta_0^2 - \gamma^2\norm{\overline{\bf x}'}^2}
    = \sqrt{\norm{{\bf x} - \overline{{\bf x}}'}^2 + (1 - \gamma^2)(\beta_0^2 + \norm{\overline{{\bf x}}'}^2)} ,
\end{equation*}
where the last expression was obtained through the discrete Parseval relation.
\Cref{eq:D_gamma_0_better} expresses $D_\gamma^{(0)}$ as the intersection of a ball with an affine space containing its center.
As the nullity of ${\tt A}$ is $\Phi$,
this shows that $D_\gamma^{(0)}$ is a $\Phi$-dimensional ball centered at $\gamma\overline{\bf x}'$ with radius $R_\gamma^{(0)}$.  

In comparison with \cref{eq:D_gamma_prime_def},
the only difference between $D_\gamma^{(0)}$ and $D_\gamma'$ are the radii of the hyperballs.
We can therefore obtain a final $\rho$ estimate in this case by simply 
substituting $R_\gamma'\gets R_\gamma^{(0)}$ 
and 
$\gamma_{\max}'\gets \gamma_{\max}^{(0)}$ into \cref{eq:rho_final_guess_wo}.
This latter bound $\gamma_{\max}^{(0)}$ is given by 
\begin{equation*} 
       \gamma_{\max}^{(0)} \coloneqq \floor*{\sqrt{\frac{\norm{{\bf x}}^2 + \beta_0^2}{\norm{\overline{{\bf x}}'}^2 + \beta_0^2}}}.
\end{equation*}
Note that when $\abs{\gamma} = 1$, $\Rgamma'=\Rgamma^{(0)}$, while when $\gamma = 0$,
$\Rgamma'<\Rgamma^{(0)}$, and
for $\abs{\gamma} > 1$, $\Rgamma'>\Rgamma^{(0)}$.
Accordingly,
the cutoff $\gamma_{\max}^{(0)}$ 
is also significantly smaller than in case (1).

\subsection{\texorpdfstring{$\beta_2$}{beta2} Estimate}
\label{sec:final_beta2}

Having computed $\rho(\beta_2)$ for each considered guess,
we can estimate the value of $\beta_2$ required to recover an integer signal.
As seen in \cref{sec:lattice_ex}, for sufficiently large $\beta_1$ values,
the reduced lattice contains exactly $\Phi + 1$ vectors which satisfy the ${\tt B}_1$ block. 
Also note that linear independence prevents the remaining $N - \Phi$ vectors in the reduced basis from satisfying the ${\tt B}_1$ block.
Therefore,
since ${\bm \ell}^*$ satisfies the ${\tt B}_1$ block,
if it is one 
one of the $2(\Phi + 1) + 1$ shortest vectors in the lattice,
it should ideally appear in the reduced basis.
Although there are only $\Phi + 1$ vectors satisfying the ${\tt B}_1$ block in the reduced lattice basis,
the quantity $2(\Phi + 1) + 1$ additionally accounts for the fact that both ${\bm \ell}$ and $ -{\bm \ell}$ in the lattice have the same norm,
as well as the inclusion of ${\bf 0} \in \mathcal{L}$.
None of these vectors,
however,
appear simultaneously in the reduced basis due to linear independence constraints. 

We denote the value of $\beta_2$ required to make ${\bm \ell}^*$ one of the $2(\Phi + 1) + 1=2\Phi+3$ shortest vectors by $\beta_2^{(\Phi + 1)}$.
Thus,
to solve for $\beta_2^{(\Phi + 1)}$,
we solve the equation $\rho(\beta_2) = 2\Phi + 3$ for $\beta_2$.
Solving this equation for the complete guess defined in \cref{eq:guess},
using the approximation of $\rho(\beta_2)$ in \cref{eq:rho_final_guess},
yields
\begin{equation}\label{eq:beta2_guess}
    \beta_2^{(\Phi + 1)}
    = \left( \frac{V_\Phi}{(2\Phi + 3)J!d_N} \left( \frac{2}{N} \right)^J \sum_{\gamma=-\gamma_{\max}}^{\gamma_{\max}} \Rgamma^\Phi  \right)^{1/2J},
    \begin{aligned}
        &&\gamma_{\max} &= \floor*{ \sqrt{\frac{\norm{{\bf x} - \overline{{\bf x}}}^2}{\beta_0^2} + 1}},
        \\
        &&\Rgamma &= \sqrt{\norm{{\bf x} - \overline{{\bf x}}}^2 + (1 - \gamma^2)\beta_0^2} .
    \end{aligned}
\end{equation}
For the modified guess $\overline{\bf x}'$ defined in \cref{eq:guess_wo},
using the corresponding formula for $\rho(\beta_2)$ in \cref{eq:rho_final_guess_wo},
we obtain
\begin{equation}\label{eq:beta2_guess_wo}
    \begin{aligned}
    \beta_2^{(\Phi + 1)}
    &= \left( \frac{V_\Phi}{(2\Phi + 3)J!d_N} \left( \frac{2}{N} \right)^J \sum_{\gamma=-\gamma'_{\max}}^{\gamma'_{\max}} (\Rgamma')^\Phi \exp\left( -\frac{\lambda_\gamma'}{2} \right) \right)^{1/2J},
    &\gamma_{\max}' &= \floor*{ \sqrt{\frac{\norm{{\bf x} - \overline{{\bf x}}'}^2}{\beta_0^2} + 1} },
    \\
    \Rgamma' &= \sqrt{\norm{{\bf x} - \overline{{\bf x}}'}^2 + (1 - \gamma^2)\beta_0^2}
    &\lambda_\gamma' &= \frac{2(\Phi + 2)\gamma^2}{(\Rgamma')^2N} \sum_{j=1}^{J} \abs{\tilde{x}_{k_j}}^2  .
    \end{aligned}
\end{equation}
Finally, when the guess $\overline{\bf x}={\bf 0}$ is used, we have
\begin{equation} \label{eq:beta2_noguess}
    \begin{aligned}
    \beta_2^{(\Phi + 1)}
    &= \left( \frac{V_\Phi}{(2\Phi + 3)J!d_N} \left( \frac{2}{N} \right)^J \sum_{\gamma=-\gamma^{(0)}_{\max}}^{\gamma^{(0)}_{\max}} (\Rgamma^{(0)})^\Phi \exp\left( -\frac{\lambda_\gamma^{(0)}}{2} \right) \right)^{1/2J},
    \quad\gamma^{(0)}_{\max} = \floor*{ \sqrt{\frac{\norm{{\bf x}}^2 + \beta_0^2}{\norm{\overline{{\bf x}}'}^2 + \beta_0^2}}},
    \\
    \Rgamma^{(0)} &= \sqrt{\norm{{\bf x} - \overline{{\bf x}}'}^2 + (1 - \gamma^2)(\beta_0^2 + \norm{\overline{{\bf x}}'}^2)}
    \qquad\qquad\qquad\lambda_\gamma^{(0)} = \frac{2(\Phi + 2)\gamma^2}{(\Rgamma^{(0)})^2N} \sum_{j=1}^{J} \abs{\tilde{x}_{k_j}}^2 .
    \end{aligned}
\end{equation}

We also consider an alternate sufficient condition for ${\bm \ell}^*$ to appear in the reduced lattice basis.
While the previous condition for $\beta_2$ ensured that ${\bm \ell}^*$ was  among the $\Phi+1$ short vectors satisfying the ${\tt B_1}$ block,
in some cases we may obtain a tighter bound on $\beta_2$ by restricting our attention to lattice vectors with $\gamma \ne 0$.
If $\beta_0$ is chosen large or no guess is used,
almost every vector in the reduced lattice will have $\gamma=0$,
as in \cref{eq:reduced_no_guess}.
However,
as LLL outputs a basis,
at least one vector in the reduced basis must have $\gamma\neq0$.
Therefore,
if ${\bm \ell}^*$ is the shortest vector with $\gamma \ne 0$,
it should appear in an optimally reduced lattice,
even when it is not one of the $2\Phi + 3$ shortest vectors overall. 

While ${\bm \ell}^*$ has lattice coefficient $\gamma=1$,
there is no guarantee that a vector with $\gamma=1$ will appear in the reduced basis. 
Therefore, this alternate condition requires ${\bm \ell}^*$ to be the shortest among lattice vectors with any $\gamma\neq0$.  
We denote the value of $\beta_2$ required for this condition to hold by $\beta_2^{(\gamma\ne0)}$.
To compute $\beta_2^{(\gamma\ne0)}$,
we solve the equation $\rho(\beta_2)=2$ (to account for both ${\bm \ell}^*$ and $-{\bm \ell}^*)$,
but only need to consider the sum over  $\gamma\ne0$ in the formulas for $\rho(\beta_2)$.
The expressions for $\beta_2^{(\gamma\ne0)}$ may thus be obtained from 
\cref{eq:beta2_guess,eq:beta2_guess_wo,eq:beta2_noguess} by substituting 2 for $(2\Phi+3)$ and omitting the $\gamma = 0$ term of the summation.

As either value $\beta_2^{(\Phi + 1)}$ or $\beta_2^{(\gamma \ne 0)}$ should be sufficiently large to ensure ${\bm \ell}^*$ appears in the reduced basis,
our estimated $\beta_2$ is the minimum of these values
\begin{equation} \label{eq:beta2_final}
    \beta_2
    = \min\set*{\beta_2^{(\Phi + 1)}, \beta_2^{(\gamma \ne 0)} } . 
\end{equation}
\Cref{fig:each_beta2} plots the two estimates $\beta_2^{(\Phi + 1)}$ and $\beta_2^{(\gamma \ne 0)}$ for a set of test signals with $N=30$,  entries identically and independently distributed as $\binomdist{N}{0.5}$,
using $J = 1$ with each guess.
For the bases with the full or modified guess,
the minimal quantity that determines $\beta_2$ in \cref{eq:beta2_final} depends on the value of $\beta_0$.
For small $\beta_0$ values, $\beta_2^{(\Phi + 1)}$ is smaller, and as explained above, $\beta_2^{(\gamma \ne 0)}$ is smaller in the larger $\beta_0$ regime.
In contrast,
$\beta_2^{(\Phi + 1)}$ is always the minimum when no guess is used.

\begin{figure}[htb]
    \centering
    \includegraphics[width=.8\textwidth]{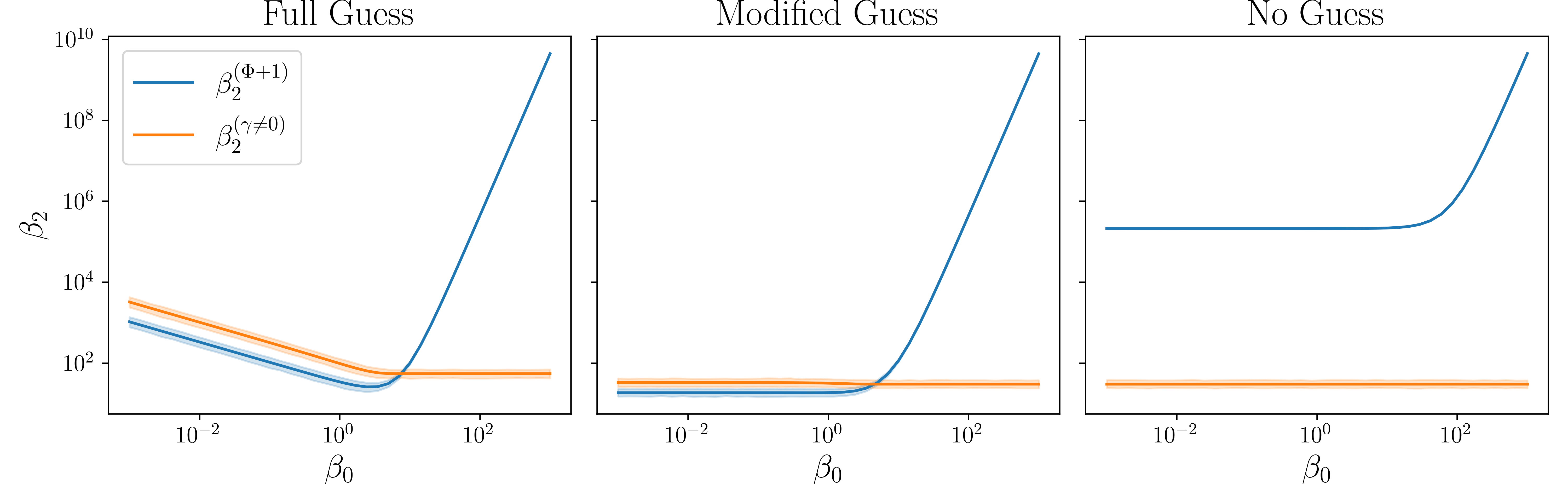}
    \vspace{-0.5pc}
    \caption{Values of $\beta_2^{(\Phi+1)}$ and $\beta_2^{(\gamma\neq0)}$ as functions of $\beta_0$ for 100 test signals of length $N=30$ with entries distributed as $\binomdist{300}{0.5}$, recovered using $J=1$.  For each of the three guess strategies, the plotted curves represent the average values, with shaded 95th percentile confidence intervals, over the 100 test signals.}
    \label{fig:each_beta2}
\end{figure}

We emphasize again that overall this analysis is a heuristic,
making several approximations to arrive at the final estimates of $\beta_2$.
However,
our numerical simulations in \cref{sec:verify} largely support the accuracy of these estimates.
We also note that this analysis ignores the approximation factor of LLL \cref{eq:lll_approx} when reducing the lattice basis.
The $\beta_2$ analysis could account for the approximation factor
by counting the number of lattice vectors which satisfy,
\begin{equation} \label{eq:approx_ell_bound}
    \norm{{\bm \ell}} 
    \le \left( \frac{4}{4\delta - 1} \right)^{N/2}\norm{{\bm \ell}^*} ,
\end{equation}
instead of $\norm{{\bm \ell}} \le \norm{{\bm \ell}^*}$.
However, 
conducting the analysis with \cref{eq:approx_ell_bound} would drastically increase the estimated value of $\beta_2$, and this upper bound for the approximation is often not tight in practice~\cite{nguyen2006average,Aardal2000}.  The numerical results in \cref{sec:verify} further show that even when the approximation factor becomes significant,
our analysis is more accurate than using \cref{eq:approx_ell_bound}.

\begin{remark}[$\beta_2$ estimate for \cref{alg:freq}]
\label{rem:beta2_freq}
While we can adapt these results to the case of \cref{alg:freq}, we note that the preceding analysis does not directly apply.  For this lattice \cref{eq:lattice_basis2}, the analogous set of feasible lattice coefficients is given by,
\begin{equation*} 
	D_{\gamma} \coloneqq
	\set{{\bm \alpha} \in \R^{N} \given \norm{{\bm \alpha}} \le \norm{{\bf y}} \text{ and } \alpha_{n} = 0 \text{ for } \Phi \le n < N} ,
\end{equation*}
which differs from the sets in \cref{eq:D_gamma_def,eq:D_gamma_0_better,eq:D_gamma_prime_def},
as it does not constrain the DFT coefficients $\tilde{\alpha}_{k}$ for $\gcd(k, N) > 1$. 
While the scaled orthogonal transformation $\widehat{\bf z}$ still maps $D_\gamma$ to a $\Phi$-dimensional Euclidean ball,
it does not give a canonical coordinate representation of 
$\widehat{\bf z}(D_\gamma)$.
Unlike the previous cases, 
the ball does not align with the $\widehat{\bf z}$ coordinate axes. 
Therefore,
there exists an orthonormal coordinate representation ${\bm \theta}$ for which the dimensionality reduction becomes the coordinate constraints $\theta_n=0$ for  $\Phi \le n < N$,
but the coordinates $\widehat{\bf z}$ are nontrivial linear combinations of the aligned ${\bm \theta}$ system.
While \cref{eq:CLT_sphere} still implies that any fixed number of ${\bm \theta}$ coordinates are asymptotically independent Gaussian,
the $\widehat{\bf z}$ indices corresponding to
the sampled Fourier coefficients $\tilde{\alpha}_{k_j}$
each depend on multiple ${\bm \theta}$ coordinates.
The asymptotic independence thus does not apply to the sampled Fourier coefficients,
and the relevant term
\begin{equation*}
    \frac{1}{\norm{{\bm \alpha}}^2}\sum_{j=1}^{J}\abs{ \tilde{\alpha}_{k_{j}} - \gamma \tilde{x}_{k_{j}} }^{2}
\end{equation*}
is a quadratic form in correlated normal random variables.
Therefore its limiting distribution follows a generalized,
rather than a noncentral,
$\chi^2$ distribution,
which is substantially more difficult to analyze~\cite[\getcrefname{theorem}~1.4.2]{muirhead1982aspects}.

Nevertheless,
we can modify the theoretical $\beta_{2}$ estimates for \cref{alg:memo_1D} to produce an approximate approximate $\beta_{2}$ value for \cref{alg:freq}.
Our numerical results in \cref{sec:alg_compare} demonstrate that this approximation still provides an accurate estimate of the $\beta_{2}$ value for \cref{alg:freq}.
We focus on the case with no guess ($\overline{\bf y}={\bf 0}$). 
The guess \cref{eq:guess_freq} is more difficult to analyze than the guess for \cref{alg:memo_1D},
because we cannot apply Parseval to determine an entrywise characterization of the least-norm solution.
Moreover,
our numerical tests suggest that 
the performance of \cref{alg:freq} is not impacted by the inclusion of a guess.

Based on the differences between \cref{alg:memo_1D,alg:freq}, an approximate theoretical $\beta_{2}$ value for the no-guess case of \cref{alg:freq} is $\min\set{\beta_2^{(\Phi + 1)},
\beta_2^{(\gamma\ne0)}}$,
where $\beta_2^{(\Phi+1)}$ is given by 
\begin{equation} \label{eq:freq_theory}
    \begin{aligned}
    \beta_2^{(\Phi + 1)}
    &= \left( \frac{V_\Phi}{(2\Phi + 3)J!} \left( \frac{2}{\Phi} \right)^J \sum_{\gamma=-\gamma_{\max}^{(0)}}^{\gamma_{\max}^{(0)}} \left(\Rgamma^{(0)}\right)^\Phi \exp\left( -\frac{\lambda_\gamma^{(0)}}{2} \right) \right)^{1/2J},
    \quad\gamma_{\max}^{(0)} = \floor*{ \sqrt{\frac{\norm{{\bf y}}^2}{\beta_0^2} + 1}},
    \\
    \Rgamma^{(0)} &= \sqrt{\norm{{\bf y}}^2 + (1 - \gamma^2)\beta_0^2}
    \qquad\qquad\qquad\qquad\qquad\qquad\lambda_\gamma^{(0)} = \frac{2(\Phi + 2)\gamma^2}{\left(\Rgamma^{(0)}\right)^2\Phi} \sum_{j=1}^{J} \abs{\tilde{x}_{k_j}}^2 .
    \end{aligned}
\end{equation}
Compared with \cref{eq:beta2_noguess},
the corresponding expression for \cref{alg:freq} differs in three main ways.
First,
since the ILP has dimension  $\Phi$ rather than $N$,
every occurrence of $N$ is replaced by $\Phi$.
Second,
because the lattice basis in \cref{eq:lattice_basis2} contains no ${\tt B}_1$ block, 
there is no lattice determinant factor $d_N$.
Finally,
removing the ${\tt B}_1$ block increases the value of $\Rgamma^{(0)}$,
as
\begin{equation*}
    \norm{{\bf y}}^2
    = \frac{1}{N} \sum_{k = 0}^{N} \abs*{\sum_{n=0}^{\phi(N)-1}y_n\uroot{N}{nk}}^2
    = \frac{1}{N} \sum_{\gcd(k, N) \ne 1}^{N} \abs*{\sum_{n=0}^{\phi(N)-1}y_n\uroot{N}{nk}}^2
    + \frac{1}{N} \sum_{\gcd(k, N) = 1}^{N} \abs{\tilde{x}_k}^2
    \ge \norm{{\bf x} - \overline{\bf x}'}^2.
\end{equation*}
With all of these differences, we expect the value of $\beta_2^{(\Phi+1)}$ in \cref{eq:freq_theory} to be larger than in \cref{eq:beta2_noguess}, especially when $\Phi$ is small relative to $N$.

An estimate for $\beta_2^{(\gamma\neq0)}$ is obtained from \cref{eq:freq_theory} by similarly replacing the denominator factor $2\Phi+3$ with 2 and omitting the $\gamma=0$ term from the summation.
As observed in 
\cref{fig:each_beta2},
the use of the zero guess suggests that $\beta_2^{(\Phi + 1)}>\beta_2^{(\gamma \ne 0)}$ will generally hold, which implies that only $\beta_2^{(\gamma \ne 0)}$ will  contribute to the final estimate of $\beta_2$.
\end{remark}

\section{Guess Error Distribution} \label{sec:K}

The theoretical $\beta_2$ estimates in \cref{sec:theory} depend on the initial error $K = \norm{{\bf x} - \overline{\bf x}}$ between the true signal ${\bf x}$ and the guess.
Since the true signal is unknown in practice, these estimates in their current form cannot generally be used directly.  
However,
if it is known that ${\bf x}$ is drawn from a certain probability distribution,
then the estimates induce a corresponding probability distribution for $\beta_2$.
In this section, we analyze the case where  the entries of ${\bf x}$ are independent binomial distributions,
\begin{equation*}
    x_n \sim \binomdist{L}{p},
    \qquad 0 \le n < N.
\end{equation*}
This is a natural model,
as it appears in the one-dimensional subproblems that arise from inverting a binary image (or more generally when the image entries are themselves i.i.d. binomial).
The same approach can, in principle, be adapted to other signal distributions.

Under the current model, 
the zero-frequency DFT coefficient is also binomial, 
with 
\begin{equation*}
  T \coloneqq \tilde{x}_0 
  = \sum_{n = 0}^{N - 1} x_{n} 
  \sim \binomdist{NL}{p} .
\end{equation*}
We condition the distribution of the entries $x_n$ on the realized value of $T$. 
This conditioning is reasonable as we always sample $\tilde{x}_0$, 
and all remaining Fourier coefficients depend strongly on its value. 
Conditioning fixes the total signal mass, 
and we will see that the remaining Fourier coefficients can be accurately modeled by normal random variables.
Conditioning on $T$,
we can thus view the vector ${\bf x}$ as a sample from a multivariate hypergeometric distribution, 
where there are $T$ samples,
$N$ object types, 
and $L$ of each object.

Our goal is to characterize the distribution of the initial guess error $K$.  
 When $\overline{\bf x}$ is the full guess defined in \cref{eq:guess}, 
and $F$ is the set of sampled frequencies in \cref{eq:sampled_freq},
we have 
\begin{equation} \label{eq:K2}
K^{2} 
= \frac{1}{N}\sum_{\substack{k<N \\ k \notin F}} \abs{\tilde{x}_{k}}^{2}
= \frac{1}{N}\sum_{\substack{k<N/2 \\ k \notin F}}(\abs{\tilde{x}_{k}}^{2} + \abs{\tilde{x}_{N-k}}^2)
= \frac{1}{N}\sum_{\substack{k<N/2 \\ k \notin F}}(\abs{\tilde{x}_{k}}^{2} + \abs{\tilde{x}_{k}^*}^2)
= \frac{1}{N}\sum_{\substack{k<N/2 \\ k \notin F}}2\abs{\tilde{x}_{k}}^{2},
\end{equation}
where we have applied the discrete Parseval relation and used the conjugate symmetry of the DFT.
Note that this calculation use that $k = 0$ and, if $N$ is even, $k = \frac{N}{2}$ are both in $F$,
which holds as the uniqueness guarantee requires these frequencies.

As each $\tilde{x}_k$ in \cref{eq:K2} is the sum of random variables
\begin{equation*} 
    \tilde{x}_{k} = \sum_{n = 0}^{N-1} x_{n} \uroot{N}{kn},
\end{equation*}
we would like to be able to model the real and imaginary parts of $\tilde{x}_{k}$ as normal random variables.  However, as the multivariate hypergeometric variables of ${\bf x}$ are not independent, we cannot directly apply the central limit theorem.  However, the distribution of ${\bf x}$ differs from a multinomial sample only through sampling without replacement, and has a representation that allows the central limit theorem to be applied.  Since the covariance matrices differ only by the finite population correction, we first analyze the simpler multinomial model and then apply the correction.

A multinomial random vector admits a highly related construction. 
If we draw $T$ independent samples $n_1, \dots, n_{T}$ uniformly from $\set{0,\dots,N-1}$ (with replacement), 
and let $y_n$ denote the number of times index $n$ is selected, 
then 
\begin{equation*}
    \Re[\tilde{y}_{k}] 
    = \sum_{n=0}^{N-1} y_n \cos\left( \frac{2\pi kn}{N} \right)
    = \sum_{m=1}^T \cos\left( \frac{2\pi kn_m}{N} \right)
\end{equation*}
is the sum of $T$ i.i.d. random variables.
Since $\bE\event{y_n}=T/N$, the expected value of $\tilde{y}_{k}$ can be computed by
\begin{equation}
\label{eq:Ex_k}
    \bE\event{\tilde{y}_{k}} 
    = \bE\event*{\sum_{n=0}^{N-1} y_{n}\uroot{N}{nk}}
    = \sum_{n=0}^{N-1} \bE\event{y_{n}} \uroot{N}{nk} 
    = \sum_{n=0}^{N-1} (T/N) \uroot{N}{nk} 
    = 0,
\end{equation}
where we have used the orthogonality of the discrete Fourier basis and the fact that $k \ne 0$ as $k=0$ is in the sampled set $F$.
Therefore, applying the central limit theorem to $\Re[\tilde{y}_{k}]$ gives the approximate distribution
\begin{equation} \label{eq:y_tilde_clt}
    \Re[\tilde{y}_k] \sim \normaldist{0}{T\var\left[ \cos\left( \frac{2\pi kn}{N} \right) \right]}.
\end{equation}
The variance can easily be computed analytically,
as
\begin{align*}
    \var\left[ \cos\left( \frac{2\pi kn}{N} \right) \right]
    = \bE\event*{\cos^{2}\left( \frac{2\pi kn}{N} \right)}
    - \bE\event*{\cos\left( \frac{2\pi kn}{N} \right)}^2 
    = \frac{1}{N} \sum_{n=0}^{N-1} \cos^{2}\left( \frac{2\pi kn}{N} \right) 
    = \frac{1}{2}.
\end{align*}

Now, to model $\Re[\tilde{x}_{k}]$, 
we note that the covariance matrices of the multivariate hypergeometric distribution and the multinomial distribution are related by a factor of the finite population correction~\cite{hypergeom-book}
\begin{equation*}
    \frac{NL-T}{NL-1}.
\end{equation*}
By applying this rescaling of the variance to the distribution in \cref{eq:y_tilde_clt}, and computing $\bE\event{\tilde{x}_k}=0$ as in \cref{eq:Ex_k},
we obtain the model,
\begin{equation*}
    \Re[\tilde{x}_{k}] \sim \normaldist{0}{\frac{T}{2} \cdot \frac{NL-T}{NL-1}} .
\end{equation*}
Applying an identical argument to the imaginary part yields 
\begin{equation*}
    \Im[\tilde{x}_{k}] \sim \normaldist{0}{\frac{T}{2} \cdot \frac{NL-T}{NL-1}} .
\end{equation*}
Now, 
we make the further approximation that 
\begin{equation*}
    \set{\Re[\tilde{x}_{k}], \Im[\tilde{x}_{k}] \given 0\le k < \floor{N/2} \text{ and } k \notin F}
\end{equation*}
is a set of independent random variables.  Numerical experiments indicate only weak dependence, 
supporting this approximation. 
By the expression for $K^2$ in \cref{eq:K2}, 
after dividing by the variance, 
this model approximates the random variable 
\begin{equation*}
    \frac{K^{2}}{\frac{T}{N} \cdot \frac{NL-T}{NL-1}}
\end{equation*}
as the sum of the squares of independent standard normal random variables.

To count the number of independent normal random variables in the sum in \cref{eq:K2} , we first note that the set $F$ in \cref{eq:sampled_freq} contains all but $\Phi - 2J$ coefficients.
Each term $\abs{\tilde{x}_k}^2 = \abs{\Re[\tilde{x}_k]}^2+\abs{\Im[\tilde{x}_k]}^2$ contributes two independent random variables. 
However,
since the sum in \cref{eq:K2} includes only one representative from each conjugate pair,
we count only half of the unsampled Fourier coefficients. 
These two factors cancel, 
so the total number of standard normal random variables is $\Phi-2J$.  
This specifies the number of degrees of freedom for the $\chi^2$ distribution of the sum,
giving
\begin{equation} \label{eq:K_dist}
    \frac{K^{2}}{\frac{T}{N} \cdot \frac{NL-T}{NL-1}}
    \sim \chisqdist{\Phi - 2J},
    \qquad
    T \sim \binomdist{NL}{p} .
\end{equation}
As $\bE\event{\chisqdist{n}} = n$ and $\bE\event{\binomdist{n}{p}} = np$,
applying iterated expectation to \cref{eq:K_dist} yields,
\begin{equation} \label{eq:K_mean}
    \bE\event{K^2} 
    = (\Phi - 2J)NL^2p \frac{1 - p}{NL - 1}
    \approx (\Phi - 2J)Lp(1 - p) .
\end{equation}
\Cref{eq:K_dist} characterizes the distribution of the guess error,
while \cref{eq:K_mean} gives an approximate value of $K$ (by taking $K \approx \sqrt{\bE\event{K^2}}$) that can be used for practical applications. 
\Cref{fig:K_dist} compares the Monte Carlo distribution of $K$ for $N = 60$ with selected values of $J$, $L$ and $p$ to
the model in \cref{eq:K_dist},
supporting the accuracy of the approximation.

\begin{figure}[htb]
    \centering
    \includegraphics[width=\textwidth]{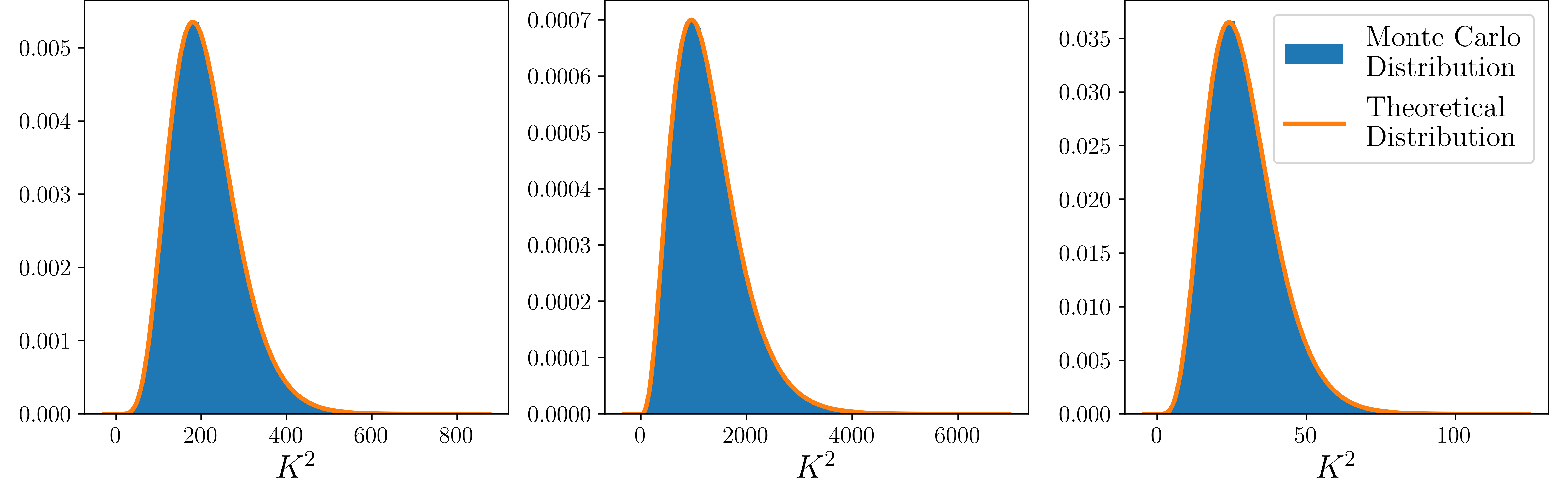}
    \vspace{-1.5pc}
    \caption{
    Distributions of $K$ for signals of length $N=60$ with entries distributed as $\binomdist{L}{p}$ for different values of $L$, $p$, and $J$.
    {\bf Left}: $L = 60$, $p = 0.5$, $J = 1$;
    {\bf Middle}: $L = 1000$, $p = 0.2$, $J = 4$;
    {\bf Right}: $L = 10$, $p = 0.6$, $J = 2$; The orange curve shows the theoretical distribution from \cref{eq:K_dist}, while the blue density estimate shows the results of a Monte Carlo simulation.
    }
    \label{fig:K_dist}
\end{figure}

The same approach may be used to approximate the distribution of $K$ for the modified guess $\overline{\bf x}'$ from \cref{eq:guess_wo}. Assuming ${\bf x}$ is binomially distributed,
the analysis gives the distribution of the
error of the modified guess as,
\begin{equation} \label{eq:partial_guess_dist}
    \frac{\norm{{\bf x} - \overline{\bf x}'}^2}{\frac{T}{N} \cdot \frac{NL - T}{NL - 1}}
    \sim \chisqdist{\Phi} ,
    \qquad
    T \sim \binomdist{NL}{p}.
\end{equation}
This expression is analogous to \cref{eq:K_dist}, 
but there are $2J$ more degrees of freedom for the $\chi^2$ distribution as each $\tilde{x}_{\pm k_j}$ do not contribute to the guess.
We can also adapt this analysis to the sampled coefficients that contribute to the noncentrality parameter $\lambda_\gamma$ in \cref{eq:beta2_guess_wo}.
This yields
\begin{equation} \label{eq:lam_dist}
    \frac{\sum_{j = 1}^J \abs{\tilde{x}_{k_j}}^2}{\frac{T}{2} \cdot \frac{NL - T}{NL - 1}}
    \sim \chisqdist{2J} ,
    \qquad
    T \sim \binomdist{NL}{p},
\end{equation}
where the extra factor of $\frac{N}{2}$ in the denominator occurs because we are excluding conjugate frequencies and start with the error in frequency space.
As the $\chi^2$ random variables in \cref{eq:partial_guess_dist,eq:lam_dist} both depend on the sampled coefficients $\tilde{x}_{\pm k}$,
they are dependent.
Thus,
we cannot simply substitute both $\chi^2$ models into \cref{eq:beta2_guess_wo} to derive the full probabilistic $\beta_2$ estimate.

However,
the dependence between $\norm{{\bf x}-\overline{\bf x}'}$  and $\sum\abs{\tilde{x}_{k_j}}^2$ has a particularly simple form,
since the sampled coefficients form part of the total modified guess error by, 
\begin{equation*}
    \norm{{\bf x} - \overline{\bf x}'}^2
    = \frac{1}{N} \sum_{\gcd(k, N) = 1} \abs{\tilde{x}_k}^2
    = \frac{2}{N} \sum_{j=1}^J \abs{\tilde{x}_{k_j}}^2
    + \frac{1}{N} \sum_{ \substack{\gcd(k, N) = 1 \\ \pm k \notin \set{k_1, \dots, k_J}} } \abs{\tilde{x}_k}^2
    = \frac{2}{N} \sum_{j=1}^J \abs{\tilde{x}_{k_j}}^2
    + \norm{{\bf x} - \overline{\bf x}}^2 .
\end{equation*}
The first term is exactly the contribution from the sampled Fourier
coefficients, while the second is the error of the original guess
$\overline{\bf x}$.
Since these two sums involve disjoint sets of Fourier coefficients (which are modeled as independent),
the corresponding $\chi^2$ random variables are independent as well.  Therefore,
we define the independent random variables
\begin{equation*}
    X_1\sim\chi^2(2J)
\quad \text{and} \quad
X_2\sim\chi^2(\Phi-2J),
\end{equation*}
which,
by \cref{eq:partial_guess_dist,eq:lam_dist}, give the relevant distributions
\begin{equation} \label{eq:mod_guess_dist_refined}
    \norm{{\bf x}-\overline{\bf x}'}^2
    = \frac{T}{N}\frac{NL-T}{NL-1}(X_1+X_2) ,
    \qquad
    \sum_{j=1}^J\abs{\tilde x_{k_j}}^2
    = \frac{T}{2}\frac{NL-T}{NL-1}X_1 .
\end{equation}
Substituting these expressions into the equation for the  noncentrality parameter $\lambda_\gamma$ yields
\begin{equation} \label{eq:lam_dist_ugly}
    \lambda_\gamma
    = \frac{2(\Phi + 2)\gamma^2\sum_{j = 1}^{J} \abs{\tilde{x}_{k_j}}}{(\norm{{\bf x} - \overline{\bf x}'}^2 + (1 - \gamma^2)\beta_0)N}
    \sim \frac{
        (\Phi + 2)\gamma^2 T \frac{NL - T}{NL - 1} X_1
    }{
        \left( \frac{T}{N} \cdot \frac{NL - T}{NL - 1} (X_1 + X_2) + (1 - \gamma^2)\beta_0 \right)N
    } .
\end{equation}
Applying the same substitution to every occurrence of
$\norm{{\bf x}-\overline{\bf x}'}^2$
in \cref{eq:beta2_guess_wo}
and replacing $\lambda_\gamma$ by \cref{eq:lam_dist_ugly} gives a probabilistic model for $\beta_2$ in terms of the independent random variables $X_1, X_2$ and $T$.

Finally,
we consider the no-guess case.
In contrast to the non-zero guess settings, 
the radii $\Rgamma^{(0)}$ depend on both the error of the modified guess $\norm{{\bf x} - \overline{\bf x}'}$ and its length $\norm{\overline{\bf x}'}$.
A complete probabilistic analysis would therefore require adapting the previous calculations to model their joint distribution.
Rather than pursue this additional analysis,
we observe that it offers little practical benefit as the previous results already encompass the practically significant terms.
In particular,
the dominant contribution to \cref{eq:beta2_noguess} comes from the $\abs{\gamma}=1$ terms of $\beta_2^{(\gamma\neq0)}$.
This is particularly convenient,
as these terms coincide exactly with the modified guess:
for $\abs{\gamma}=1$, 
\begin{equation*}
    R_1^{(0)} = \norm{{\bf x} - \overline{\bf x}'} = R_1',
\end{equation*}
so the model distributions in \cref{eq:mod_guess_dist_refined,eq:lam_dist_ugly} also apply to $\Rgamma^{(0)}$ and $\lambda_\gamma^{(0)}$.
The remaining $\abs{\gamma} \ne 1$ terms can be neglected,
as they have a negligible impact on the final $\beta_2$ estimate. 
First,
$\Rgamma^{(0)}$ decays rapidly in $\abs{\gamma}$,
independent of the choice of $\beta_0$,
making the contributions from 
$\abs{\gamma} > 1$ relatively insignificant 
(indeed, for the parameter ranges considered in \cref{fig:each_beta2},
all test signals had $\gamma_{\max}=1$).
Second,
as shown in 
\cref{fig:each_beta2},
the longer ${\bm \ell}^*$ in the no-guess case implies that 
$\beta_2^{(\gamma \ne 0)}$ always dominates the minimum in \cref{eq:beta2_final},
so the $\gamma=0$ term never contributes to the final estimate.

\begin{remark}[Complexity Estimate] \label{rem:runtime}
As an application of \cref{eq:K_mean},
we can combine our approximations of $K$ and $\beta_2$ with the LLL runtime bound.
We will show that selecting $\beta_2$ according to the theoretical estimates does not jeopardize the polynomial runtime of the lattice reduction in \cref{alg:memo_1D}.
The resulting bound depends only on the current subproblem size $N$, 
the number of sampled coefficients $J$, 
and the signal bound $L$.

We start with the runtime $O(d^4n(d+\log B)\log B)$ in \cref{eq:lll_runtime}, 
which is applied to the basis in \cref{eq:lattice_basis} when $\beta_2$ is selected from \cref{eq:beta2_final}.
This lattice has dimension $d=N + 1 = \bigo{N}$,
and,
after removing linearly dependent rows of the ${\tt B}_1$ block,
has ambient space dimension $n=N + 1 + (N - \Phi) + 2J = \bigo{N}$.
Therefore,
the asymptotic runtime is 
\begin{equation}
\label{eq:lll_intermediate}
    \bigo{N^5(N+\log B)\log B} , 
\end{equation}
where it remains to estimate the length $B$.
Clearly,
${\bf b}_N$ is the longest basis vector,
with length given by
\begin{equation*}
    B^2= \norm{{\bf b}_N}^2
    = K^2 + \beta_0^2 + \beta_1^2\sum_{t=1}^{\omega}\norm{{\bf x}^{(N/p_t)}}^2 + \beta_2^2\sum_{j=1}^{J}\abs{\tilde{x}_{k_j}}^2 .
\end{equation*}
Although the ${\tt B}_1$ contribution can also be expressed in terms of the DFT coefficients by
$\norm{{\bf x}^{(N/p)}}^2= N^{-1}\sum_{k = 0}^{(N/p)}\abs{\tilde{x}_{kp}}$, 
it is asymptotically dominated by the ${\tt B}_2$ contributions as typically $\beta_2\gg \beta_1$.
Therefore we have
\begin{equation} \label{eq:B2_bigo}
    B^2 = \bigo[\bigg]{\beta_2^2\sum_{j = 1}^J\abs{\tilde{x}_{k_j}}^2}
    = \bigo{J(\beta_2NL)^2},
\end{equation}
where the second estimate follows from the {\it a priori} bound     for $x_n \sim \binomdist{L}{p}$,
\begin{equation*}
    \abs{\tilde{x}_k} \le \sum_{n=0}^{N - 1}\abs{x_n} \le NL.
\end{equation*} 
Substituting \cref{eq:B2_bigo} back into \cref{eq:lll_intermediate} yields a runtime bound for each iteration of \cref{alg:memo_1D} in terms of $\beta_2$, $N$, $J$, and $L$:
\begin{equation} \label{eq:lll_runtime_beta2}
    \bigo{N^5(N + \log J + \log\beta_2 + \log L)(\log J + \log\beta_2 + \log N + \log L)} .
\end{equation}
Note that the $\log J$ terms of \cref{eq:lll_runtime_beta2} can be omitted as $J < \frac{\Phi}{2} < N$.

We now apply the theoretical $\beta_2$ estimate from \cref{eq:beta2_guess} to the runtime bound in \cref{eq:lll_runtime_beta2}.
Using $\beta_2\le\beta_2^{(\Phi+1)}$ from \cref{eq:beta2_final}, 
repeated simplification gives the following bound on $\beta_2$:
\begin{align*}
     \beta_2 &\le 
    \left( \frac{V_\Phi}{(2\Phi + 3)J!d_N} \left( \frac{2}{N} \right)^J \sum_{\gamma=-\gamma_{\max}}^{\gamma_{\max}} (K^2 + (1 - \gamma^2)\beta_0^2)^{\Phi/2}  \right)^{1/2J} \\ 
    &\le \left( \frac{V_\Phi}{(2\Phi + 3)J!d_N} \left( \frac{2}{N} \right)^J \sum_{\gamma=-\gamma_{\max}}^{\gamma_{\max}} (K^2 + \beta_0^2)^{\Phi/2}  \right)^{1/2J} \\
    &\le \left( \frac{V_\Phi}{(2\Phi + 3)J!d_N} \left( \frac{2}{N} \right)^J \left(2\frac{K}{\beta_0}+3\right) (K^2 + \beta_0^2)^{\Phi/2}  \right)^{1/2J},
\end{align*}
where the last line derives an upper bound on the number of terms in the sum by removing the floor from the definition of $\gamma_{\max}$ in \cref{eq:gamma_max}.
Taking logarithms of both sides yields 
\begin{equation*} 
    \log \beta_2 =\bigo[\Big]{\frac{1}{2J}\log\left[(2K/\beta_0+3) (K^2 + \beta_0^2)^{\Phi/2}\right]} 
    = \bigo[\Big]{\frac{\Phi}{J}\log K}.  
\end{equation*}
Now, applying the approximation $K \approx \sqrt{(\Phi - 2J)Lp(1-p)}$ from \cref{eq:K_mean} yields
\begin{equation*} 
   \log K = \bigo{\log(\Phi L)}.
\end{equation*}
Finally, substituting everything into the LLL runtime bound in \cref{eq:lll_runtime_beta2} gives,
\begin{equation} \label{eq:runtime_ugly}
    \bigo[\Big]{N^5(N + \frac{\Phi}{J}(\log \Phi + \log L))(\frac{\Phi}{J}(\log \Phi + \log L) + \log N)}.
\end{equation}
where the $\log L$ terms were superseded by the $\frac{\Phi}{J}\log L$ contribution of the $\log\beta_2$ term.

We can apply a bound for the totient function to yield a simpler form of the runtime estimate.
As $\phi(n)\ge \sqrt{n/2}$, we have $\log\phi(n) \gtrsim \log\log n$. 
Combining this with $\phi(n) \gtrsim \frac{n}{e^{\gamma}\log\log n}$~\cite{barkley1962primes}, we obtain
\begin{equation*}
    n \lesssim \phi(n)\log\log n \lesssim \phi(n)\log\phi(n) .
\end{equation*}
Therefore, we have $N = \bigo{\Phi\log \Phi}$ (and can also use the bound $\log N = \bigo{\Phi\log \Phi}$).
We also assume that $J$ is relatively small,
so that $N = \bigo{\frac{\Phi}{J}\log \Phi}$ still holds,
which permits the omission of the interior $N$ and $\log N$ terms in \cref{eq:runtime_ugly}.
Finally,
noting that $\log\Phi\log L = \bigo{\log^2\Phi + \log^2L}$
gives a simplified estimate for the LLL runtime when the $\beta_2$ value is chosen according to the theory:
\begin{equation} 
    \label{eq:approx_runtime}
    \bigo[\Big]{N^5\frac{\Phi^{2}}{J^{2}}(\log^2 \Phi + \log^2 L)}.
\end{equation}
\Cref{eq:approx_runtime} thus gives a pseudo-polynomial upper bound on the runtime of any iteration of \cref{alg:memo_1D}.
Importantly, 
after selecting $\beta_2$ according to the theoretical estimates,
the asymptotic complexity of the lattice reduction is not substantially altered.  
\end{remark}

\section{Analysis Verification} \label{sec:verify}

We now present numerical results supporting the analysis in \cref{sec:theory}.
Our main objective is to evaluate how the parameter $\beta_2$ influences the recovery of random integer signals and to compare the theoretical predictions with experimental performance.  
In particular, we investigate the dependence of $\beta_2$ on the lattice parameter $\beta_0$ as well as the problem parameters $N$, $L$, and $J$, demonstrating that the theory developed in \cref{sec:theory} accurately predicts the observed trends.  

Numerous experimental studies of LLL have shown that the rigorous bound in \cref{eq:lll_approx} significantly overestimates the approximation error encountered in practice~\cite{gama2008predictinglll,Aardal2000,nguyen2006average,nguyen1999crypto,backes2002heuristics}.
We will show that our numerical tests exhibit the same behavior. While the approximation factor influences the reconstruction performance, the effective approximation factor appears to be significantly smaller than the worst-case bound in \cref{eq:lll_approx}.  The results also demonstrate that these effects are much more significant when $\phi(N)$ is large. 

\subsection{\texorpdfstring{$\beta_0$}{beta0} and Guess Method Dependence} \label{sec:beta0}

We first investigate how the empirically required value of $\beta_2$ depends on the choice of $\beta_0$.
For $N=30$,
we generated a test set of 100 random signals with entries distributed as $\binomdist{10N}{0.5}$.
For each fixed value of $\beta_0$,
an exponential search over $\beta_2$ was performed for each test signal to determine the minimum value of $\beta_2$ required for successful reconstruction.
\Cref{fig:guess_comp} plots the mean empirically required values of $\beta_2$ as a function of $\beta_0$,
with the 95\% confidence interval illustrating the variability across the test set.
The columns of \cref{fig:guess_comp} consider each of the three guess cases: 
the full guess $\overline{\bf x}$ given in \cref{eq:guess}, 
the modified guess $\overline{\bf x}'$ given in \cref{eq:guess_wo},
which does not use $\tilde{x}_{\pm k_j}$, 
and the no-guess case.
For comparison,
each plot includes the theoretical distribution
obtained by calculating the theoretical estimate of  $\beta_2$ in \cref{eq:beta2_final} for each test signal over the same range of $\beta_0$ values.

\begin{figure}[htb]
    \centering
    \includegraphics[width=\textwidth]{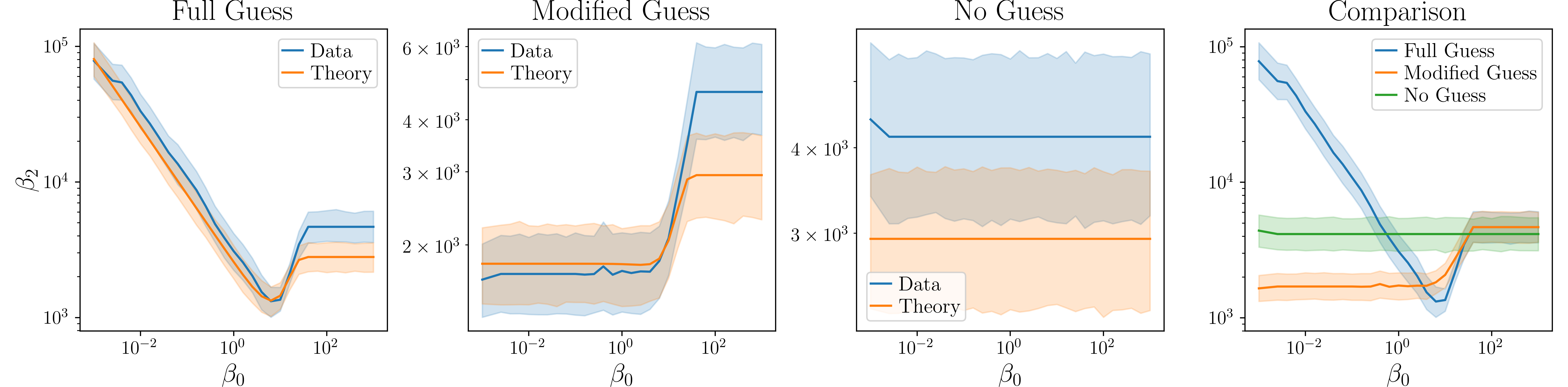}
\vspace{-1.5pc}
    \caption{Simulation of 100 test signals of length $N=30$ with entries distributed as $\binomdist{300}{0.5}$, using $J=1$.  The first three panels compare the theoretical and empirical distributions for each guess strategy, while the last panel compares the empirical distributions across all three guess strategies. Each curve plots the mean, with the shaded region representing the 95\% confidence interval.}
    \label{fig:guess_comp}
\end{figure}

Overall,
the theoretical predictions agrees remarkably well with the experimental results.
The theory accurately captures the shape of the empirical $\beta_2$ curves for all guess strategies,
while providing a close approximation to the numerical values.
Among the three guess strategies,
the full guess $\overline{\bf x}$ is by far the most sensitive to the choice of $\beta_{0}$,
as selecting a small value of $\beta_0$ makes the corresponding required value of $\beta_2$ quite large. 
In contrast,
the modified guess $\overline{\bf x}'$ performs best for smaller values of $\beta_0$,
while having only slight variation in $\beta_2$ across the entire range.
The no guess case is almost completely agnostic to the choice of $\beta_0$.

The final plot of \cref{fig:guess_comp} compares all three guess methods.
For sufficiently large values of $\beta_0$,
all three methods require similar values of $\beta_2$.
If $\beta_{0}$ is optimized,
the full guess $\overline{\bf x}$ performs best,
as it achieves the smallest required value of $\beta_2$ overall.
However,
the full guess $\overline{\bf x}$ is the most sensitive to choice of $\beta_0$,
and performs noticeably worse if $\beta_{0}$ is too small.
The minimum value of $\beta_2$ achieved by the modified guess $\overline{\bf x}'$ is only slightly worse,
and has close to optimal performance for small $\beta_{0}$ values. 
To avoid computationally expensive joint parameter optimization,
we default to the modified guess $\overline{\bf x}'$ for the remainder of the paper,
with a reasonably small value of $\beta_0 = 1 \times 10^{-1}$.
Although the full guess can achieve slightly better performance after careful tuning,
the modified guess provides a more robust choice with considerably less sensitivity to the value of $\beta_0$.

\subsection{Role of LLL Approximation} \label{sec:approxLLL}
We next repeated this numerical experiment in \cref{fig:guess_comp} with $N=31$.
The first column of \cref{fig:beta0} compares the required empirical and theoretical values of $\beta_2$ for the modified guess case $\overline{\bf x}'$ when $N=30$ and 31.  
For $N = 31$,
the empirical and theoretical curves show the same qualitative dependence on $\beta_0$,
but also display a significant numerical gap in $\beta_2$ magnitude.  
One possible explanation is that the LLL approximation factor plays a more substantial role for $N=31$.
To investigate this possibility,
we first consider how the theoretical analysis changes when the approximation factor of LLL is taken into account.

Motivated by the work in \cite{nguyen2006average},
which provides heuristic analysis and empirical support for a smaller average-case approximation factor bound than the worst-case in \cref{eq:lll_approx},
we assume that there exists some effective approximation factor $C$ (which depends on the dimension $N+1$) associated with applying LLL to the family of lattice bases in \cref{eq:lattice_basis}.
Our choice of $\beta_{1}$ can be easily modified so that the  approximation factor does not affect which reduced basis vectors satisfy ${\bm \ell}^{{({\tt B}_{1})}} = {\bf 0}$.
Specifically,
in comparison with \cref{eq:beta1},
we choose $\beta_{1}$ to satisfy 
\begin{equation*} 
	\beta_1 > C\norm{{\bm \ell}^*} = C\sqrt{\norm{{\bf x} - \overline{\bf x}'}^2+\beta_0^2},
\end{equation*}
so that every vector ${\bm \ell}$ with 
$\norm{{\bm\ell}} \le C\norm{{\bm \ell}^*}$ satisfies 
${\bm \ell}^{{({\tt B}_{1})}} = {\bf 0}$.
Since this elevated choice of $\beta_1$ is still significantly smaller than the estimate for $\beta_2$ in \cref{eq:beta2_final},
its produces no observable runtime effects.

To account for the approximation factor $C$,
we could apply our analysis  in \cref{sec:theory} to count the number of lattice vectors with length bounded by $C\norm{{\bm \ell}^*}$.
As we can still assume that all such vectors satisfy 
${\bm \ell}^{{({\tt B}_{1})}} = {\bf 0}$,
the only change to the analysis would be the larger values of \Rgamma, which approximately increases to $C\Rgamma$.
This would introduce an additional factor $C^{\Phi}$ on the estimate for $\rho$ in \cref{eq:rho_final_guess},
in addition to expanding the range of $\gamma$ summed over. 
This power of $\Phi$ justifies why the approximation factor effects scale with $\phi(N)$, 
instead of directly increasing with $N$,
and suggests that they will be most apparent for prime values of $N$,
such as $N=31$.

To determine whether the discrepancy in \cref{fig:beta0} is indeed caused by the LLL approximation factor,
we repeated the experiments using exact lattice solvers that eliminate the approximation introduced by LLL.
Since the theoretical estimate in \cref{eq:beta2_final} is determined by different terms in  the small- and large-$\beta_0$ regimes (see \cref{fig:each_beta2}),
we investigate these regimes separately using two different exact reduction formulations. 

Since $\beta_2^{(\Phi+1)}$ determines the theoretical bound for sufficiently small values of $\beta_0$,
we first isolate this regime by 
computing the exact $\phi(N) + 1$ shortest nonzero lattice vectors.
This exact solver was also implemented using the fpylll library,
by iteratively increasing a search radius
until calling the \texttt{Enumeration.enumerate} method produced $\phi(N) + 1$ vectors~\cite{hanrot2011algorithms,kannan1983enumeration}.
The theoretical value of $\beta_2^{(\Phi + 1)}$ from \cref{eq:beta2_guess_wo} estimates the value of $\beta_2$ required to recover ${\bm \ell}^*$ by solving this exact lattice problem,
as it is the condition for the true solution to be one of the $2\Phi+3$ (including ${\bf 0}$ and negation) shortest lattice vectors.
The second column of \cref{fig:beta0} compares the empirically required value of $\beta_2$ for this exact solver with the corresponding theoretical prediction.
For both $N=30$ and $N=31$,
the theory accurately predicts the required values of $\beta_2$ and their dependence on $\beta_0$.
Since this experiment removes the approximation factor introduced by LLL, the close agreement strongly suggests that the discrepancy observed in the first column for $N=31$ at smaller values of $\beta_0$ is not a failure of the theoretical prediction, but rather reflects the increasing influence of the LLL approximation factor.

For larger values of $\beta_0$,
the $\beta_2$ value is determined by the $\gamma\neq0$ case.  
Although we are not aware of a  computationally tractable method to find the shortest lattice vector over all $\gamma \ne 0$,
we can compute the exact shortest vector in the lattice subject to the desired constraint  $\gamma = 1$.
This can be formulated as an exact closest vector problem to the lattice vector with coefficients ${\bm \alpha}={\bm 0}$ and $\gamma=1$,
which was implemented with the fpylll \texttt{closest\_vector} method, using the \texttt{method="proved"} setting~\cite{hanrot2011algorithms}.
The third column of \cref{fig:beta0} compares the empirically required values of $\beta_2$ for this exact $\gamma = 1$ solver with the theoretical prediction $\beta_2^{(\gamma=1)}$,
which is obtained by modifying the expression for  $\beta_2^{(\gamma\ne0)}$ to retain only the $\gamma = \pm1$ terms in the summation.
As expected,
neither the exact solver nor its theoretical prediction depends on $\beta_0$,
since every lattice vector with $\gamma=1$ has ${\bm \ell}^{({\tt B}_0)}=\beta_0$.
Again, there is close agreement between the theory and this exact solver for both $N=30$ and $N=31$.

The last column of \cref{fig:beta0} directly compares the LLL solver with these two exact solvers.
For $N = 30$,
the LLL solver effectively tracks the exact shortest vectors solver when $\beta_0$ is small,
and the exact $\gamma = 1$ solver when $\beta_0$ is large. 
For $N = 31$,
the LLL solver performed significantly worse than either exact solver across both regimes.  

\begin{figure}[htb]
    \centering
    \includegraphics[width=\textwidth]{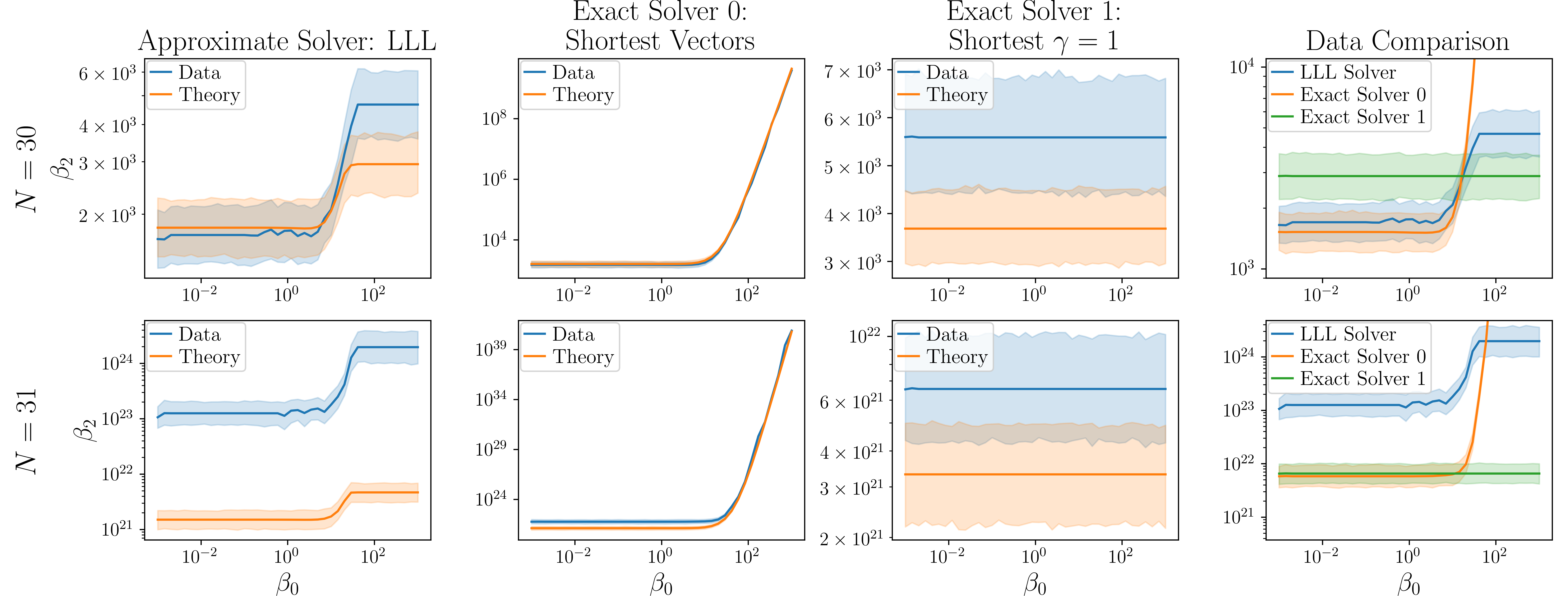}
    \vspace{-1.5pc}
    \caption{Simulation of 100 test signals of length $N=30$ and $N=31$ with entries distributed as $\binomdist{10N}{0.5}$ recovered using $J=1$.  The first row corresponds to $N=30$, and the second row to $N=31$. In each row, the first three panels compare the theoretical and empirical distributions for three different solvers: LLL, an exact solver that finds the $\Phi+1$ shorest lattice vectors, and an exact solver restricted to $\gamma=1$. The fourth panel compares the empirical distributions of the three solvers. Each curve plots the mean, with the shaded region representing the 95\% confidence interval.}
    \label{fig:beta0}
\end{figure}

To further investigate the role of the LLL approximation factor, 
we performed an additional test for $N = 30$ and $N = 31$.
With $\beta_0 = 1\times 10^{-1}$ fixed,
we measured 
the fraction of test signals recovered by the LLL solver and the exact solver over a range of $\beta_2$ values.
The resulting recovery curves are shown in \cref{fig:solvers} alongside theoretical $\beta_2$ curves.
As expected,
the theoretical,
LLL,
and exact solver recovery curves are nearly identical for $N = 30$.
For $N = 31$,
the theory and exact solver curves still agree very well,
further confirming the accuracy of our estimate. 
However,
the LLL recovery curve is shifted to the right of these two curves,
showing that larger $\beta_2$ values were required with LLL.

To distinguish failures caused by the LLL approximation from those caused by an insufficient value of $\beta_2$,
we classified an unsuccessful LLL reconstruction as an approximation-factor failure whenever ${\bm \ell}^*$ was one of the $\phi(N) + 1$ shortest lattice vectors (recovered with the first exact solver),
but all of the vectors in the LLL-reduced basis were longer than ${\bm \ell}^*$.  
This is indicated by the green curve in \cref{fig:solvers}.
For $N = 30$,
this  curve stays close to 0 for all $\beta_2$ values,
indicating that the approximation factor is insignificant.
For $N = 31$,
the green curve initially increases at a similar rate to the exact solver recovery.
In this regime,
the LLL recovery lies far below the exact recovery,
so the greens curve values indicate that most of these early failures are caused by the LLL approximation factor. 

We note that while this influence of the approximation factor causes the theory to underestimate the required values of $\beta_2$ for the $N = 31$ plot in \cref{fig:solvers},
it is still reasonably accurate.
One might instead attempt to account for the LLL approximation factor directly by incorporating the worst-case bound from \cref{eq:lll_approx} into the analysis as the effective approximation factor from above,
setting $C=(2 / \sqrt{4\delta - 1})^N$ as described in \cref{eq:approx_ell_bound}.
However, 
for $N = 31$,
this produces a theoretical $\beta_2$ estimate on the order of $10^{52}$,
which is many orders of magnitude larger than the empirically required value. Although rigorous,
this bound is far too pessimistic for practical parameter selection,
whereas our approximation remains comparatively accurate.

\begin{figure}[htb]
    \centering
    \includegraphics[width=\textwidth]{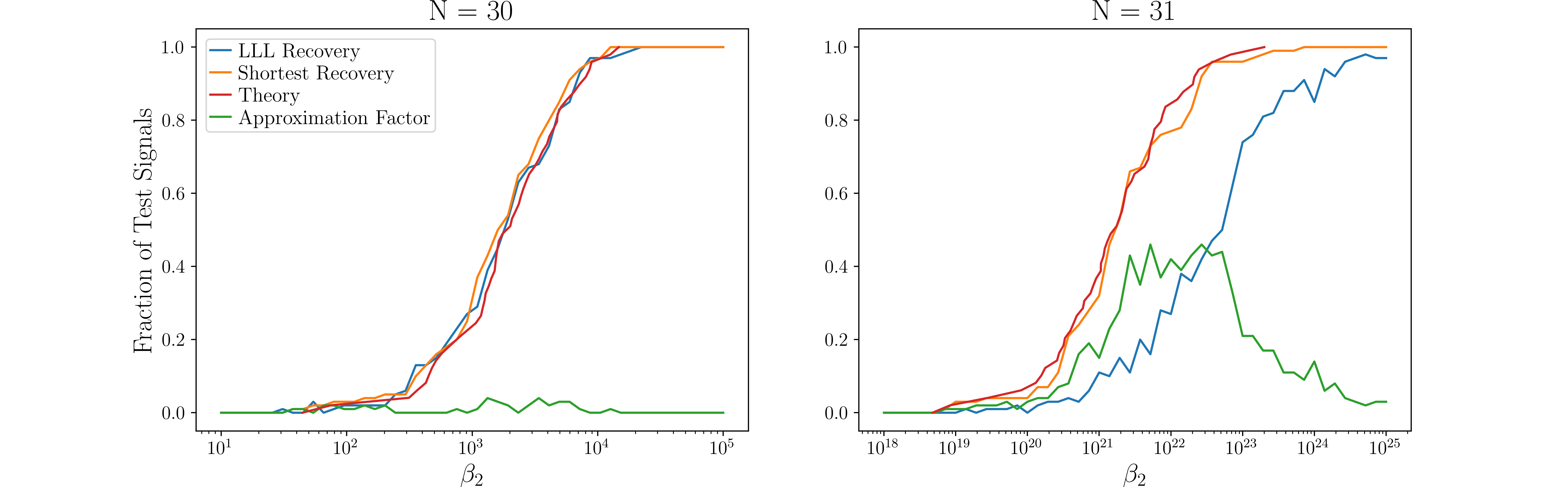}
        \vspace{-1.5pc}
    \caption{Simulation of 100 test signals of length $N=30$ and $N=31$ with entries distributed as $\binomdist{10N}{0.5}$, recovered using $J=1$.  For each value of $\beta_2$, the plots show: (1) the fraction of test signals successfully recovered using LLL; (2) the fraction successfully recovered using Exact Solver 0; (3) the theoretical prediction of the recovery probability; and (4) the fraction of instances in which the correct vector was shorter than the vectors returned by LLL, serving as a proxy for failures due to the LLL approximation factor.}
    \label{fig:solvers}
\end{figure}

\subsection{Problem Parameter Dependence}

With our choice of $\beta_0=1\times10^{-1}$ and using the modified guess $\overline{\bf x}'$,
we now investigate how the value of $\beta_2$ required for inversion varies with the problem parameters $N$, $L$, $K$, and $J$.

The divisor structure of the signal length $N$ determines both the number of unknowns and the amount of data available from subproblems.  
For example,
if $N$ is prime,
then there is only one measurement, $\tilde{x}_0$,
available from subproblems,
while composite $N$ may provide many additional coefficients (at least $N/2$ when $N$ is even).
The theoretical $\beta_2$ estimate incorporates this dependence through the totient function $\phi(N)$.
\Cref{fig:N} plots both experimental and theoretical $\beta_2$ values over a range of signal lengths with $J=1$ fixed.
The experimental values were computed as in \cref{sec:beta0} with $J = 1$ sampled coefficient,
using test sets of 100 randomly generated signals with entries distributed as $\binomdist{N}{0.5}$ for each $N$.

The theoretical estimates accurately capture the dependence of $\beta_2$ on $N$,
particularly for values with smaller totient $\phi(N)$.
As in \cref{sec:approxLLL},
we attribute the gap for larger $\phi(N)$ to the LLL approximation factor.
The gap is most apparent for the large prime values $N = 47$, 53, and 59,
where the LLL approximation factor is expected to have the greatest impact.  
Regardless,
the theory correctly predicts the relative size of $\beta_2$ across different signal lengths
and continues to provide a useful practical estimate.

\begin{figure}[htb]
    \centering
    \includegraphics[width=\textwidth]{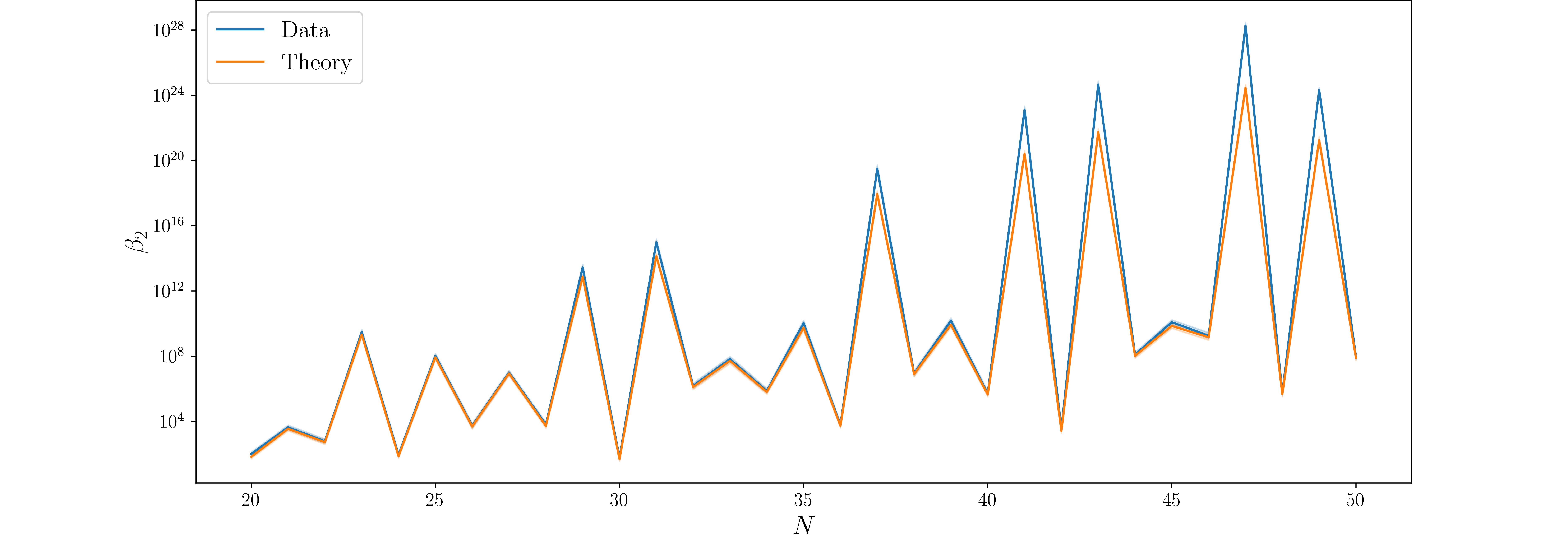}
    \vspace{-1.5pc}
    \caption{Simulation of 100 test signals for each $N\in\set{20,\dots,50}$, with entries distributed as $\binomdist{N}{0.5}$, recovered using $J=1$ and the modified guess $\overline{\bf x}'$  and $\beta_0 = 1\times 10^{-1}$.  For each value of $N$, the plot shows the average $\beta_2$ value (with 95\% confidence intervals)
  required for successful recovery, as determined empirically and predicted theoretically.}
    \label{fig:N}
\end{figure}

Next,
we investigate how the required value of $\beta_2$ depends on the amount of top-level data available, $J$.
For $N = 41$ and $N = 42$,
we computed the empirical value of $\beta_2$ required to invert 100 randomly generated test signals with entries distributed as $\binomdist{10N}{0.5}$.
\Cref{fig:J} compares the average across the test set of these empirical $\beta_2$ values  with the corresponding theoretical predictions for $1 \le J < \frac{\phi(N)}{2}$.
For $N=42$,
the curves are nearly indistinguishable,
while for the prime case $N=41$,
the theoretical curve lies slightly below the empirical values across all $J$,
consistent with the stronger influence of the LLL approximation factor observed for prime signal lengths.
In both cases,
the required value of $\beta_2$ decreases rapidly as $J$ increases,
indicating that reconstruction stability can be substantially improved by sampling more than the minimal data set.
We explore this idea further in \cref{sec:full}, where additional top-level measurements enable the recovery of larger signals and images.

\begin{figure}[htb]
    \centering
    \includegraphics[width=.8\textwidth]{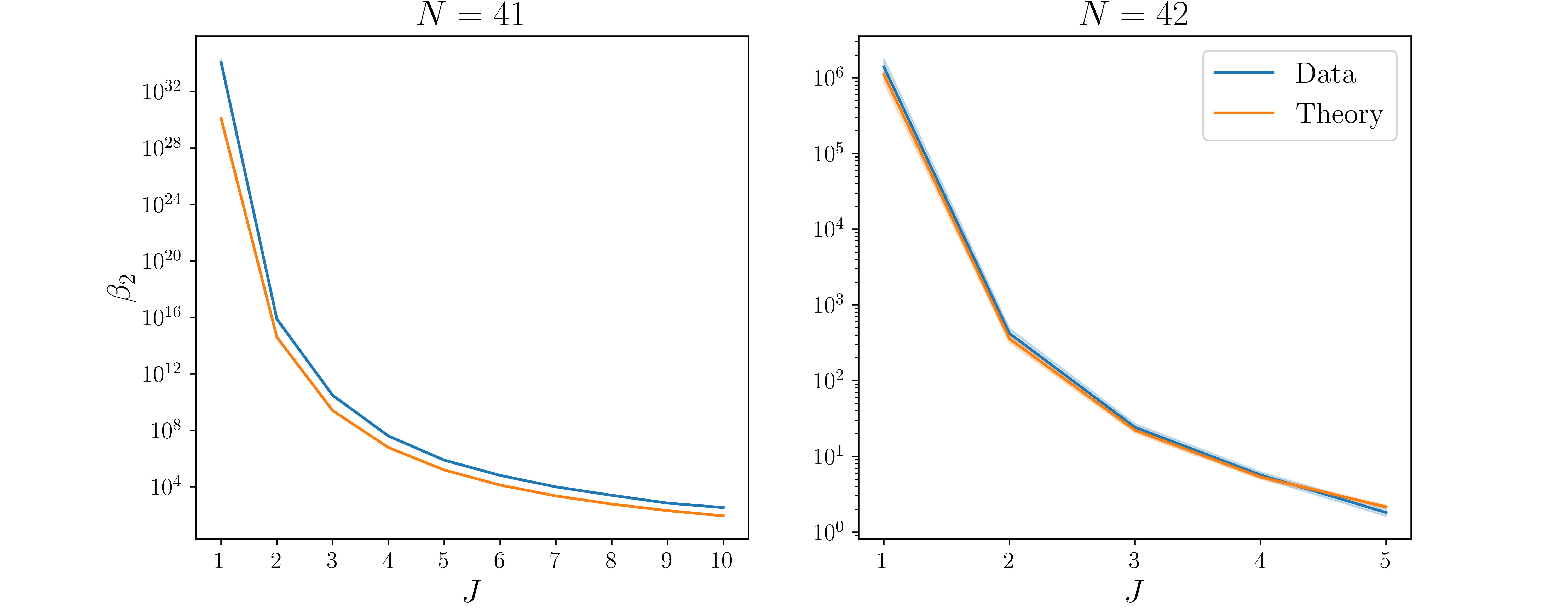}
    \vspace{-0.5pc}
    \caption{Simulation of 100 test signals for each $J$ from 1 to $\floor{\phi(N)/2} - 1$ with $N=41$ (left) and $N=42$ (right).  Signal entries were distributed as $\binomdist{10N}{0.5}$, and recovered using the modified guess $\overline{\bf x}'$ and $\beta_0 = 1\times 10^{-1}$.  For each value of $J$, the plot shows the average $\beta_2$ value (with 95\% confidence intervals) required for successful recovery, as determined empirically and predicted theoretically.}
    \label{fig:J}
\end{figure}

Finally,
we investigate the dependence of $\beta_2$ on the binomial parameter $L$ and the error of the guess $K = \norm{{\bf x} - \overline{\bf x}}$.
For this numerical test,
we returned to using the full guess $\overline{\bf x}$,
as it depends directly on $K$ as seen in \cref{eq:beta2_guess}.
In contrast,
for the modified guess $\overline{\bf x}'$,
the noncentrality parameter $\lambda_\gamma$ not only depends on the modified guess error,
but how $K$ is split between the sampled and non-sampled coefficients.
With $J = 1$ fixed,
we generated 100 test signals for each of $N=36$ and 37, 
with entries distributed as $\binomdist{L}{0.5}$ for each $L=100, 1{,}000$, and  $10{,}000$. 
For every test signal,
we computed both the empirical $\beta_2$ value and the initial guess error $K$.  
\cref{fig:K} displays these paired data on a scatter plot.
Since $N$ and $J$ are fixed,
the theoretical $\beta_2$ depends only on the guess error,
so we plotted the theoretical curve as a function of $K$.
We also plot a horizontal line showing the result of substituting the estimate $\bE\event{K}$ in \cref{eq:K_mean} to the theoretical expression for $\beta_2$.

The experimental results support the predicted dependence of $\beta_2$ on $K$.
For $N=36$,
the theoretical curve passes through the center of the empirical distributions over the full range of $K$.
For the prime case $N = 37$,
the curve captures the overall trend but consistently underestimates the required $\beta_2$,
again likely due to the LLL approximation factor.  In both cases, the average theory estimate obtained from $\bE\event{K}$ provides a useful practical approximation.
We note that for both $N$,
the empirical $\beta_2$ values exhibit some non-monotonicity as functions of $K$.
However,
we observed this behavior only with the LLL solver and not with the exact solver,
and thus attribute it to minor approximation factor effects.

\begin{figure}[htb]
    \centering
    \includegraphics[width=\textwidth]{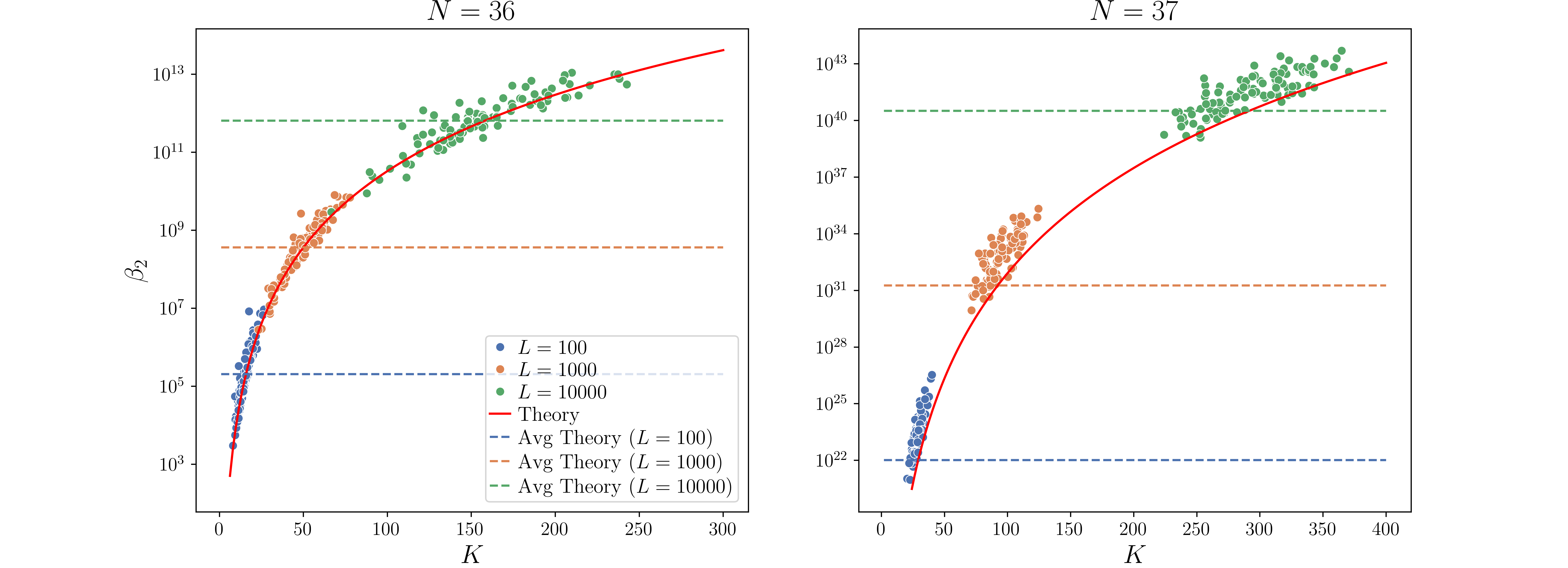}
    \vspace{-1.5pc}
    \caption{Simulation of 100 test signals with $N=36$ (left) and $N=37$ (right).  For each $L\in\set{100, 1{,}000, 10{,}000}$, signal entries were distributed as $\binomdist{L}{0.5}$, and recovered using $J=1$ with the full guess $\overline{\bf x}$ and $\beta_0 = 1\times 10^{-1}$.  For each signal in the test set, we plot its actual $K$ value against the  empirical value of $\beta_2$
    required for successful recovery.  The red curve plots the theoretical required value of $\beta_2$ against $K$.}
    \label{fig:K}
\end{figure}

\subsection{Comparing \texorpdfstring{\Cref{alg:freq,alg:memo_1D}}{Algorithms 1 and 2}}\label{sec:alg_compare}

We now compare the performance of \cref{alg:freq,alg:memo_1D}.
As discussed in \cref{rem:beta2_freq},
the theoretical analysis predicts that \cref{alg:freq} requires a larger value of $\beta_2$ for successful recovery,
because the absence of ${\tt B}_1$ constraints leads to more feasible lattice coefficients by eliminating both the lattice determinant and the constraints on the subproblem DFT coefficients.
On the other hand,
\cref{alg:freq} operates on a lower-dimensional lattice,
making it potentially preferable from a computational perspective.
This motivates a direct comparison of the two methods.
We also include the naive approach of forming the lattice directly from \cref{eq:ip_better_J} as a baseline for our comparisons.

To compare the stability of the three algorithms,
we generated a test set of 100 signals with entries distributed as $\binomdist{N}{0.5}$ for both $N = 60$ and $61$.
\Cref{fig:freq} plots the recovery fraction for each algorithm over a range of $\beta_2$ values,
along with the corresponding theoretical recovery curves obtained from the $\beta_2$ estimates in \cref{eq:beta2_guess_wo,eq:freq_theory}.  Note that the 
naive curve is omitted in the $N = 61$ plot as the naive basis is identical to the lattice basis \cref{eq:lattice_basis} for \cref{alg:memo_1D} when $N$ is prime.
The $N = 60$ plot of \cref{fig:freq} supports the accuracy of the $\beta_2$ analysis for both algorithms,
as the theoretical recovery curves closely match the empirical results.   As discussed in \cref{sec:approxLLL},
LLL approximation factor effects are visible as gaps between the theoretical and empirical $\beta_2$ curves in the plot with prime $N = 61$.

As predicted in \cref{rem:beta2_freq},
for each $N$, 
the recovery curve for \cref{alg:memo_1D} lies to the left of that of \cref{alg:freq},
demonstrating that successful recovery is achieved with smaller values of $\beta_2$.
Thus,
we expect \cref{alg:memo_1D} to be more stable and perform better with limited precision.
We also observe that the empirical curves for \cref{alg:memo_1D,alg:freq} are relatively closer in magnitude in the $N = 61$ plot that the $N = 60$ plot,
which can be explained by the fact that the lattices are more similar when $N=61$ as the ${\tt B}_1$ block only includes one constraint.

\begin{figure}[htbp]
    \centering
    \includegraphics[width=\textwidth]{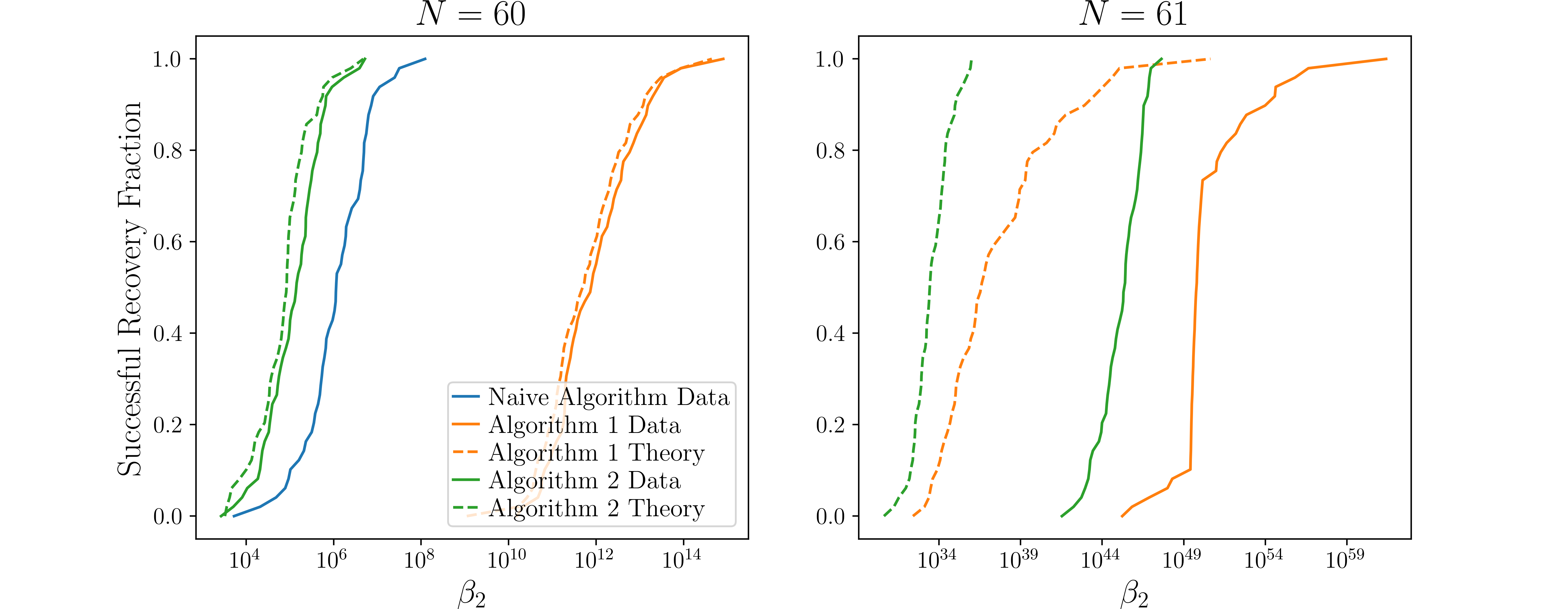}
    \vspace{-1.5pc}
    \caption{Simulation of $100$ test signals with $N=60$ (left) and $N=61$ (right), with entries distributed as $\binomdist{N}{0.5}$ and recovered from $J=1$ Fourier coefficient. \cref{alg:memo_1D} used the modified guess, while the remaining algorithms used no guess. All algorithms used $\beta_0=10^{-1}$. For varying values of $\beta_2$, the plots compare the theoretical and empirical fractions of successfully recovered test signals. The naive algorithm is omitted from the $N=61$ plot, as the algorithm is identical to \cref{alg:freq} when $N$ is prime.}
    \label{fig:freq}
\end{figure}

We now compare the algorithm runtimes,
using the same set of test signals as the $N = 60$ plot in \cref{fig:freq}.
Recall from \cref{eq:lll_runtime,rem:runtime} that the LLL runtime depends strongly on the lattice basis,
and in particular on the choice of the largest parameter $\beta_2$.
For a fair comparison,
we evaluate each algorithm using the same three values of $\beta_2$,
which were chosen as the minimum values required for
the naive algorithm,
\cref{alg:freq},
and \cref{alg:memo_1D},
respectively, 
to recover every signal in the test set.
These $\beta_2$ values are $5.19\times10^{6}$ (\cref{alg:memo_1D}),
$3.54\times10^{7}$ (naive algorithm),
and $4.42\times10^{14}$ (\cref{alg:freq}),
which reflect the results in \cref{fig:freq}:
the naive lattice formulation has similar stability to \cref{alg:memo_1D},
while \cref{alg:freq} is many orders of magnitude less stable.

\Cref{tab:times} gives the runtime of each algorithm using each of these $\beta_2$ values. 
For every algorithm,
the runtime increases with $\beta_2$,
consistent with the dependence in \cref{eq:lll_runtime}.
For a fixed value of $\beta_2$,
\cref{alg:freq} is consistently the fastest,
primarily because it reduces a lattice of dimension $\phi(N)+1$ rather than $N+1$.
Since the LLL complexity scales as $\bigo{d^5}$,
where $d$ is the lattice dimension,
this reduction in lattice dimension has a large impact on runtime.
The basis vectors of \cref{alg:freq} also have a smaller ambient dimension $n$,
from excluding the ${\tt B}_1$ block and shortening the ${\tt A}$ block,
which further reduces the runtime.
The naive algorithm is consistently slower than \cref{alg:memo_1D}.
Although its basis vectors have a smaller ambient dimension,
more rows are scaled by $\beta_2$ which increases the basis vector length $B$.
The runtime bound in \cref{eq:lll_runtime}
has linear dependence on the ambient dimension $n$,
but scales quadratically in $B$.

\begin{table}[htb]
    \centering
    \begin{tabular}{c|ccc}
         \backslashbox{Algorithm}{$\beta_2$ Value} & $5.19 \times 10^{6}$ & $3.54\times 10^{7}$ & $4.42\times10^{14}$ \\
         \midrule
         \midrule
         Naive Algorithm & \color{red}{4.50}  & 4.46 & 9.37 \\
         \cref{alg:freq} & \color{red}{1.05} & \color{red}{1.07} & 1.24 \\
         \cref{alg:memo_1D} & 2.57 & 2.74 & 5.17 
    \end{tabular}
    \caption{Total runtime over all test signals for $N = 60$ using the same parameters as in \cref{fig:freq}.
    Red text indicates unsuccessful reconstruction due to insufficient $\beta_2$.}
    \label{tab:times}
\end{table}

As a final level comparison, we can
compare the runtime of each algorithm using its minimal $\beta_2$ value, as this best reflects practical performance.
Despite requiring a substantially larger value of $\beta_2$, \cref{alg:freq} remains the fastest overall (1.24s) because of its lower lattice dimension. Likewise, \cref{alg:memo_1D} (2.57s) outperforms the naive algorithm (4.46s). 
Overall, both proposed algorithms, \cref{alg:freq,alg:memo_1D}, improve upon the naive formulation.  
When precision is limited,
\cref{alg:memo_1D} is preferable because of its better stability.  When sufficient precision is available and runtime is the primary concern,
\cref{alg:freq} offers the best computational performance.  
As the naive algorithm is less stable and slower than \cref{alg:memo_1D}, it is never preferred.

\section{Full Inversion} \label{sec:full}

All of the preceding numerical experiments considered a single subproblem of \cref{alg:memo_1D},
matching the scope of the theoretical analysis.
In practice,
however,
reconstructing an integer signal requires solving every subproblem.
In this section,
we first examine the collection of subproblems arising in the inversion of a $210\times210$ image with entries from $0$ to $L = 255$,
before presenting a couple of practical image inversion examples.
Our numerical tests catalog the associated values of $L$,
$N$,
and $J$,
and then demonstrate image reconstruction from both the minimal sampling set and expanded sampling sets sufficient for double- and single-precision arithmetic.

\subsection{1D Inversion Subproblems} \label{sec:subprob}

Consider the application of \cref{lem:uniq} to an $N \times N$ integer image ${\tt X}$.
The group $\Z_N \times \Z_N$ contains maximal cyclic subgroups of order $N$,
as well as cyclic subgroups of order $N'$ for every divisor $N' \divs N$.
Therefore,
recovering ${\tt X}$ requires solving one-dimensional subproblems of size $N'$ for every $N' \divs N$.
Throughout this section,
we continue to model the image entries as independent binomial random variables,
\begin{equation*}
    X_{mn} \sim \binomdist{L}{0.5},
    \qquad 0 \le m, n < N.
\end{equation*}
Each entry of a subproblem of length $N'$ is the sum of $\frac{N^2}{N'}$ entries of ${\tt X}$ and is therefore distributed as $\binomdist{\frac{N^2L}{N'}}{0.5}$~\cite{levinson2025recovery,grigoryan-book}.

For the $210\times210$ image,
we take $N=210$ and $L=255$.
For each subproblem size $N'\divs210$,
we generated 100 one-dimensional test signals of length $N'$ with entries distributed as $\binomdist{(N/N')L_0}{0.5}$,
where $L_0 = 255N$ is the binomial parameter for the largest subproblems of length $N$.
We omitted the subproblems of lengths $1$, $2$, $3$, and $6$ from the numerical experiments,
since the minimal sampling set (together with conjugate symmetry) contains every DFT coefficient for these cases,
making reconstruction immediate.

\Cref{tab:subprobs} presents selected percentiles of the minimum $\beta_2$  values required for successful recovery.
For each test signal,
we computed the minimal empirical value of $\beta_2$ required for successful recovery,
as well as the theoretical $\beta_2$ value from \cref{eq:beta2_guess_wo,eq:beta2_final}.
The percentiles were computed using a log-linear interpolation from the associated distribution of $\beta_2$ values~\cite{hyndman1996quantile}. 
For each subproblem size,
we include the results for recovery from the minimal sampling set ($J=1$),
together with the smallest values of $J$ that make single- or double-precision recovery feasible,
when these differ from $J=1$.
These precision labels are based on the 99th percentile of the required $\beta_2$ value.
Values below $10^7$ are classified as single precision,
while values below $10^{14}$ are classified as double precision.
These thresholds were chosen after sufficient testing,
although they could likely be refined. However,
they are consistent with the stability analysis of \cref{sec:stability},
which predicts that the large usable value of $\beta_2$ is determined by the available measurement precision.
Since single- and double-precision arithmetic provide approximately 7 and 16 decimal digits of accuracy,
respectively,
the chosen thresholds are of the expected magnitude. 

\begin{table}
    \centering
    \setlength{\tabcolsep}{4pt}

\begin{tabular}{crc|S[table-format=1.1e2, scientific-notation=true]S[table-format=1.1e2, scientific-notation=true]|S[table-format=1.1e2, scientific-notation=true]S[table-format=1.1e2, scientific-notation=true]|S[table-format=1.1e2, scientific-notation=true]S[table-format=1.1e2, scientific-notation=true]|c}
\multirow{2}{*}{$N$} & \multicolumn{1}{c}{\multirow{2}{*}{$L$}} & \multirow{2}{*}{$J$} & \multicolumn{2}{c|}{50th Percentile} & \multicolumn{2}{c|}{90th Percentile} & \multicolumn{2}{c|}{99th Percentile} & \multirow{2}{*}{Precision}\\
& & & {Data} & {Theory} & {Data} & {Theory} & {Data} & {Theory}\\
\midrule\midrule
5 & $42L_0$ & 1 & 5.5e+05 & 6.9e+05 & 1.6e+06 & 1.6e+06 & 3.5e+06 & 3.0e+06 & Single \\
\midrule
7 & $30L_0$ & 1 & 8.4e+08 & 8.9e+08 & 2.7e+09 & 2.2e+09 & 4.7e+09 & 5.1e+09 & Double \\
 & & 2 & 9.4e+03 & 1.4e+04 & 2.3e+04 & 2.4e+04 & 4.2e+04 & 3.7e+04 & Single \\
\midrule
10 & $21L_0$ & 1 & 9.7e+04 & 1.4e+05 & 3.0e+05 & 3.1e+05 & 6.1e+05 & 4.9e+05 & Single \\
\midrule
14 & $15L_0$ & 1 & 7.3e+07 & 6.8e+07 & 2.7e+08 & 2.5e+08 & 5.3e+08 & 6.2e+08 & Double \\
 & & 2 & 2.7e+03 & 3.5e+03 & 7.2e+03 & 6.8e+03 & 1.2e+04 & 8.3e+03 & Single \\
\midrule
15 & $14L_0$ & 1 & 6.2e+10 & 6.5e+10 & 2.9e+11 & 2.4e+11 & 1.3e+12 & 8.3e+11 & Double \\
 & & 2 & 1.2e+05 & 1.1e+05 & 2.6e+05 & 2.1e+05 & 3.9e+05 & 2.7e+05 & Single \\
\midrule
21 & $10L_0$ & 1 & 3.2e+16 & 2.6e+16 & 1.3e+17 & 9.6e+16 & 3.0e+17 & 2.3e+17 & Extended \\
 & & 2 & 6.7e+07 & 6.1e+07 & 1.5e+08 & 1.2e+08 & 2.1e+08 & 1.7e+08 & Double \\
 & & 3 & 9.2e+04 & 8.0e+04 & 1.6e+05 & 1.2e+05 & 2.2e+05 & 1.7e+05 & Single \\
\midrule
30 & $7L_0$ & 1 & 3.2e+09 & 2.8e+09 & 1.3e+10 & 1.3e+10 & 6.3e+10 & 3.6e+10 & Double \\
 & & 2 & 2.0e+04 & 1.9e+04 & 4.7e+04 & 4.0e+04 & 6.9e+04 & 5.2e+04 & Single \\
\midrule
35 & $6L_0$ & 1 & 3.1e+34 & 1.7e+33 & 2.9e+35 & 1.4e+34 & 1.3e+36 & 4.1e+34 & Extended \\
 & & 3 & 5.4e+10 & 2.7e+10 & 1.2e+11 & 5.6e+10 & 2.1e+11 & 7.2e+10 & Double \\
 & & 5 & 1.2e+06 & 6.7e+05 & 2.1e+06 & 1.0e+06 & 2.8e+06 & 1.3e+06 & Single \\
\midrule
42 & $5L_0$ & 1 & 5.2e+14 & 4.3e+14 & 2.2e+15 & 1.5e+15 & 9.2e+15 & 3.2e+15 & Extended \\
 & & 2 & 7.6e+06 & 7.0e+06 & 1.5e+07 & 1.2e+07 & 2.3e+07 & 2.0e+07 & Double \\
 & & 3 & 1.9e+04 & 1.6e+04 & 3.1e+04 & 2.4e+04 & 4.7e+04 & 3.1e+04 & Single \\
\midrule
70 & $3L_0$ & 1 & 4.9e+30 & 2.8e+29 & 5.6e+31 & 2.6e+30 & 3.6e+32 & 1.1e+31 & Extended \\
 & & 3 & 2.2e+09 & 1.3e+09 & 7.1e+09 & 2.8e+09 & 1.2e+10 & 5.0e+09 & Double \\
 & & 5 & 1.9e+05 & 9.3e+04 & 3.3e+05 & 1.5e+05 & 5.0e+05 & 2.4e+05 & Single \\
\midrule
105 & $2L_0$ & 1 & 5.1e+65 & 1.2e+59 & 8.9e+66 & 2.0e+60 & 5.0e+67 & 6.1e+60 & Extended \\
 & & 5 & 1.2e+12 & 6.8e+10 & 2.9e+12 & 1.2e+11 & 4.5e+12 & 1.6e+11 & Double \\
 & & 9 & 1.8e+06 & 2.3e+05 & 3.0e+06 & 3.2e+05 & 4.6e+06 & 3.8e+05 & Single \\
\midrule
210 & $L_0$ & 1 & 1.0e+58 & 3.3e+51 & 2.4e+59 & 4.2e+52 & 2.3e+60 & 4.5e+53 & Extended \\
 & & 5 & 2.9e+10 & 1.6e+09 & 7.4e+10 & 2.6e+09 & 1.1e+11 & 3.9e+09 & Double \\
 & & 8 & 1.2e+06 & 1.4e+05 & 1.8e+06 & 1.9e+05 & 3.2e+06 & 2.6e+05 & Single \\
\end{tabular}

\setlength{\tabcolsep}{6pt}

    \caption{Selected percentiles of the minimum $\beta_2$ values required for successful recovery of one-dimensional subproblems arising in the inversion of a $210\times210$ image. For each divisor $N' \divs 210$, $100$ test signals with entries distributed as $\binomdist{(210/N')L_0}{0.5}$ were generated. Empirical percentiles are compared with the theoretical predictions of \cref{sec:K}. Precision labels indicate whether the $99^\mathrm{th}$ percentile of the required $\beta_2$ lies within the range of single- or double-precision.}
    \label{tab:subprobs}
\end{table}

Overall,
the experimental data in \cref{tab:subprobs} supports the theoretical $\beta_2$ estimates. 
For subproblems of length $N\le70$,
the theoretical and empirical $\beta_2$ values agree remarkably well across the reported percentiles and values of $J$.
For the larger subproblems of lengths $105$ and $210$,
the empirical values are consistently larger than the theoretical predictions,
reflecting the increased impact of the LLL approximation factor.

The table also highlights the inherent instability of reconstruction from the minimal sampling set.
For $J=1$,
the largest subproblems ($N=105$ and $210$) require extremely large values of $\beta_2$,
corresponding to 60 to 70 decimal digits of precision in the sampled DFT coefficients.
Fortunately,
the results also 
demonstrate that inversion at single or double precision requires only a modest increase in the number of measurements.
For example,
only four additional DFT samples are required at each of the $N=105$ and $N=210$ levels for double-precision recovery.
This still represents only a small fraction of the full DFT data,
so the reconstruction problem remains highly underdetermined.

One surprising feature of \cref{tab:subprobs} is that the smaller subproblem of size 105 requires a larger value of $\beta_2$ than the full problem of size 210.
More generally,
the ordering of the required $\beta_2$ values is not determined solely by the subproblem size.
While smaller subproblems recover shorter signals, 
they also involve larger integer bounds, and these competing effects help determine the required value of $\beta_2$. 

Consider a signal of size $N$ with an immediate subproblem of size $N'=N/p$, 
for some prime factor $p$ of $N$.
Since the estimates of $\beta_2$ depend primarily on $\phi(N)$,
the guess error $K$,
and the lattice determinant $d_N$,
we compare these quantities between the two subproblems.
Euler's identity 
\begin{equation} \label{eq:totient_id}
    \phi(n) = n\prod_{\text{prime }q \divs n} \left(1 - \frac{1}{q}\right),
\end{equation}
implies that, if $p \divs N'$, then 
\begin{equation} \label{eq:sub_totient_divs}
    \phi(N') = N'\prod_{q \divs N'}\left(1 - \frac{1}{q}\right) 
    = \frac{N}{p}\prod_{q \divs N}\left(1 - \frac{1}{q}\right) 
    = \frac{\phi(N)}{p} .
\end{equation}
On the other hand, if $p \ndivs N'$, then the products in \cref{eq:totient_id} for $N$ and $N'$ differ by a factor of $1 - 1/p$, giving
\begin{equation} \label{eq:sub_totient_ndivs}
    \phi(N') = N' \prod_{q \divs N'}\left(1 - \frac{1}{q}\right)
    = \frac{N}{p} \cdot \frac{\prod_{q \divs N}\left(1 - 1/q\right)}{(1 - 1/p)}
    = \frac{\phi(N)}{p(1-1/p)}
    = \frac{\phi(N)}{p-1}.
\end{equation}
If the entries of ${\bf x}^{(N)}$ are distributed as $\binomdist{L}{0.5}$,
then the decimated signal ${\bf x}^{(N')}$ has entries distributed as $\binomdist{L'}{0.5}$, where
$L' = N/N' \cdot L = pL$.
Substituting these values into the approximation of \cref{eq:K_mean} gives
\begin{equation} \label{eq:sub_K}
    \bE\event{K'^2} 
    \approx \frac{1}{2}(\phi(N') - 2J)L'
    = \frac{1}{2}(\phi(N') - 2J)pL
    = \begin{cases}
        \frac{1}{2}(\phi(N) - 2pJ)L & \text{if }p \divs N' \\
        \frac{1}{2}(\phi(N) \frac{p}{p - 1}-2pJ)L & \text{if }p \ndivs N' .
    \end{cases}
\end{equation}
Note that for this calculation,
we used \cref{eq:K_mean} which was for the full guess case.
While \cref{fig:guess_comp} showed that the three guess strategies result in similar magnitude $\beta_2$ estimates,
the expression for the full guess is much simpler to analyze.
We can also relate the lattice determinants of the subproblems by,
\begin{equation} \label{eq:sub_det}
    d_N^2 = \begin{cases}
        d_{N'}^{2p} & \text{ if } p \divs N' \\
        p^{\phi(N) / (p - 1)}d_{N'}^{2(p-1)} & \text{ if } p \ndivs N',
    \end{cases}
\end{equation}
from the definition of $d_N$ in \cref{lem:det} and the totient formulas in \cref{eq:sub_totient_divs,eq:sub_totient_ndivs}.

These relations in \cref{eq:sub_totient_divs,eq:sub_totient_ndivs,eq:sub_K,eq:sub_det} reveal the competing effects governing the required value of $\beta_2$. 
Due to the $\Rgamma^\Phi$ term in the $\beta_2$ estimates, which depends on
the relative sizes of $\phi(N)$ and $K$, we expect that, in general, larger subproblems require larger values of $\beta_2$.
One notable exception comes from the case when $p = 2$ and $p \ndivs N'$. 
Here,
\cref{eq:sub_totient_ndivs} implies the subproblems have the same totient values,
$\phi(N) = \phi(N')$,
so $V_{\phi(N)} = V_{\phi(N')}$.
Combining this with the larger integer range for the smaller subproblem,
$L' > L$,
\cref{eq:sub_K} implies
$\bE\event{K^2} < \bE\event{K'^2}$.
As $d_{N} > d_{N'}$ always holds by \cref{eq:sub_det},
this shows that the estimated $\beta_2$ in \cref{eq:beta2_guess} for $N'$ is necessarily larger than the estimated $\beta_2$ for $N$. 

This immediately explains the data in \cref{tab:subprobs} suggesting that the subproblem of size 105 requires a larger value of $\beta_2$ than the full problem of size 210.
In fact,
the same phenomenon also occurs for every divisor pair $(N,N'=N/2)$ satisfying $2\ndivs N'$,
which includes 
$N = 10, 30, 70$, and 210.
In each case, both the theoretical and empirical values of $\beta_2$ are larger for the smaller subproblem,
confirming the prediction of the parameter analysis.

\subsection{2D Inversion Examples} 

Finally, we demonstrate the reconstruction algorithm on structured two-dimensional images.
We use the lattice implementation of the two-dimensional inversion algorithm from \cite{levinson2025recovery}. 
We consider two images:
${\tt X}_1$,
a Version 22 QR Code,
and ${\tt X}_2$,
the classic boat image from the USC-SIPI Image Database~\cite{set12}. 
As a QR code, 
${\tt X}_1$ has binary entries,
and its version specifies the dimensions $105 \times 105$~\cite{tiwari2016}.
The latter image ${\tt X}_2$ was rescaled for computational practicality from $512\times512$ to $210\times210$,
as the divisor structure of 210 is favorable to the inversion algorithm.
The original intensity range $0 \le {\tt X}_2 < 256 = L$ was preserved.

As discussed in \cref{sec:subprob}, reconstruction requires solving one-dimensional subproblems of various lengths $N'\divs N$.
Although the subproblems of both ${\tt X}_1$ and ${\tt X}_2$ have the same dimensions as those considered in \cref{tab:subprobs},
the required values of $\beta_2$ and $J$ will be different in this case.
For ${\tt X}_1$,
reconstruction should be possible with smaller values of $\beta_2$ and $J$,
since the $N=105$ subproblems have integer bound $L=105$,
compared with $L = 2\cdot255$ in \cref{tab:subprobs}.
While ${\tt X}_2$ has the same integer bounds,
the results may still differ because \cref{tab:subprobs} was generated using randomly sampled binomial images,
whereas ${\tt X}_2$ is a structured image.

First,
we reconstructed the images at single precision.
Based on the experiments in the previous section,
we used a global value $\beta_2$ of $10^7$ for every subproblem.
As in \cref{tab:subprobs}, the minimal sampling set
$(J = 1)$ was insufficient to solve most subproblems at this precision,
so additional DFT coefficients were sampled until each subproblem became solvable.   
For ${\tt X}_1$, we needed to sample 1006 DFT coefficients (9.12\% of the total).  For ${\tt X}_2$, we required 8791 (19.33\%) coefficients.
The corresponding least-norm reconstructions,
obtained by setting all but the sampled DFT coefficients (and their conjugates) to 0,
are show in \cref{fig:images}.
The inversion algorithm exactly recovered the original image in 31.76s for ${\tt X}_1$ and 983.86s for ${\tt X}_2$. 

Repeating this experiment at double precision with $\beta_2=10^{14}$ and again choosing the smallest feasible $J$ for each subproblem reduced the required sampling to 625 (5.67\%) coefficients for ${\tt X}_1$ and 5021 (11.39\%) coefficients for ${\tt X}_2$.  
\Cref{fig:images} shows the least-norm reconstruction from these samples.
As the $\beta_2$ value was larger,
the recovery time increased to 107.99s for ${\tt X}_1$ and 5806.69s for ${\tt X}_2$. 

Finally,
we reconstructed ${\tt X}$ from the minimal sampling set. 
This consists of 315 coefficients (2.86\%) for ${\tt X}_1$ and 1260  coefficients (2.86\%) for ${\tt X}_2$.
\Cref{fig:images} shows the least-norm reconstruction from a minimal set of DFT coefficients.
Since no additional measurements were used, 
we experimentally determined the minimal value of $\beta_2$ required for each subproblem.  For ${\tt X}_1$, the largest $\beta_2$ value across all subproblems was $5 \times 10^{35}$, so we anticipate that about 35-40 digits of precision are required for the most difficult subproblems.  For the larger and more difficult image ${\tt X}_2$, the largest $\beta_2$ value was $5 \times 10^{97}$, which requires about 100 digits of precision.
With these larger $\beta_2$ values, 
the reconstructed image coincided exactly with the model, but increased the reconstruction time to 168.43s for ${\tt X}_1$ and 10938.53s for ${\tt X}_2$.   

\begin{figure}[htb]
    \centering

    \includegraphics[width=\textwidth]{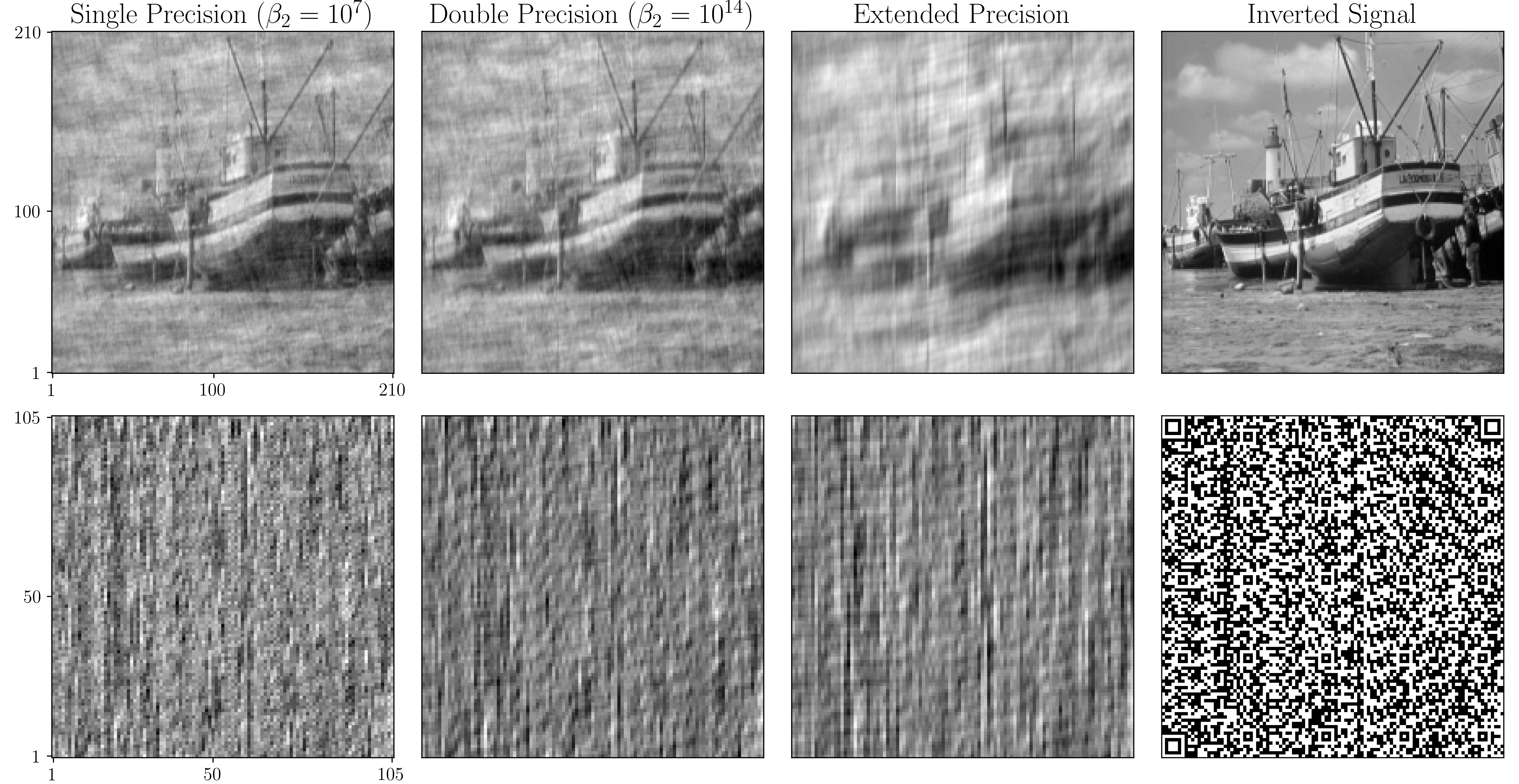}
    \vspace{-1.5pc}
    \caption{The first three panels in each row show the least-norm reconstruction obtained from the minimum number of DFT coefficients that was empirically found to be sufficient for Algorithm~2.2 to recover the original image. The first panel limits to single precision, the second panel limits to double precision, while the third panel allows extended precision so that the theoretically minimal data set can be used. The fourth panel shows the reconstructed image in each case, which exactly matches the original. The first row corresponds to the $210\times210$ boat image with integer values from 0 to 255, while the second row corresponds to a $105\times105$ QR code with binary values 0 or 1.}
    \label{fig:images}
\end{figure}

\section{Discussion}
This work provides a theoretical understanding of the lattice constructions introduced in \cite{levinson2025recovery},
explaining how the lattice geometry,
measurement precision,
and scaling parameters interact to determine successful recovery.
The resulting parameter estimates closely match the observed behavior of the algorithms across a wide range of experiments and provide practical guidance for selecting lattice parameters.
From an algorithmic perspective,
both \cref{alg:memo_1D,alg:freq} substantially improve upon a naive lattice formulation.
When measurement precision is limited,
\cref{alg:memo_1D} is generally preferred because of its greater stability.
Conversely,
when sufficient precision is available and runtime is the primary consideration,
\cref{alg:freq} offers the best computational performance.

While the theoretical development focused primarily on \cref{alg:memo_1D},
the resulting analysis was shown to provide an accurate approximation for \cref{alg:freq} in the practically relevant regime.
A complete probabilistic analysis of \cref{alg:freq} remains an interesting direction for future work.
Such an analysis would require understanding the geometry of the transformed feasible set and the resulting generalized chi-square distributions,
but could yield an even more precise characterization of the required lattice parameters.
More broadly,
a detailed investigation of the computational tradeoffs between the two formulations,
including their runtime and dependence on measurement precision,
would further clarify when each algorithm is preferred in practice.

Throughout the theoretical analysis, several approximations were made, 
including modeling discrete sets and distributions with continuum analogues 
and applying the asymptotic limiting normal distribution in \cref{eq:CLT_sphere}.
While we did not provide any rigorous error analysis for these approximations,
the numerical simulations overwhelmingly demonstrate that the analysis accurately describes the geometry of the constructed lattices.
The relatively few noticeable discrepancies between the theoretical and empirical results suggest that the approximations used in the theoretical computation of $\rho(\beta_2)$ are not the dominant source of error.  Instead, the discrepancies were attributed to the heuristic model of the LLL-reduced basis
which neglected the approximation factor of the LLL algorithm.
Existing analyses of LLL focus on worst-case upper bounds for the approximation factor,
which are generally overly pessimistic for the lattice bases encountered in practice.   
While developing a more rigorous analysis remains an interesting mathematical question,
we expect the most significant improvements in the accuracy of the $\beta_2$ estimates to come from a better understanding of how LLL behaves on the lattice bases in \cref{eq:lattice_basis,eq:lattice_basis2}.

\appendix
\crefalias{section}{appendix}
\section{Proof of \texorpdfstring{\cref{lem:det}}{Lemma 4.2}} 
\label{ap:det}

The proof of \cref{lem:det} uses the Kronecker product of matrices.  If ${\tt A}=\left[A_{ij}\right]$ is an $m\times n$ matrix and ${\tt B}$ is a $p\times q$ matrix, then the Kronecker product ${\tt A}\otimes{\tt B}$ is the $mp\times nq$ block matrix defined by
\begin{equation*}
    {\tt A}\otimes {\tt B} = \left[{\tt A}_{ij}{\tt B} \right]
\end{equation*}
We will use the following standard identities for the Kronecker product~\cite{bernstein2009matrix}
\begin{equation*} 
    ({\tt A} \otimes {\tt B})^T = {\tt A}^T \otimes {\tt B}^T,
    \qquad
    ({\tt A} \otimes {\tt B})({\tt C} \otimes {\tt D}) = ({\tt AC}) \otimes ({\tt BD}) ,
\end{equation*}
and,
if ${\tt A}$ and ${\tt B}$ are square,
\begin{equation} \label{eq:kron_det}
    \qquad
    \det({\tt A} \otimes {\tt B}) = \det({\tt A})^p \det({\tt B})^m.
\end{equation}
\begin{proof}
Let $N=p_1^{\alpha_1}p_2^{\alpha_2}\cdots p^{\alpha_\omega}_\omega$.  For each $1\le r\le \omega$, define the vectors ${\bf v}_{\ell}^{(r)}$ in $\Z^{p_r}$ by
\begin{equation}
\label{eq:vell_def}
     {\bf v}_{l}^{(r)} \coloneqq {\bf e}_{l}^{(p_{r})} - {\bf e}_{l-1}^{(p_{r})} , \qquad
    1\le \ell\le p_r-1 ,
\end{equation}
where ${\bf e}_{\ell}^{(n)}$ is the $\ell$th standard basis vector of $\R^n$.  
For each $r$, the vectors ${\bf v}_1^{(r)},\ldots,{\bf v}^{(r)}_{p_r-1}$ form a basis for  the subspace of $\Z^{p_r}$ whose coordinates sum to zero.
Let  ${\tt B}^{(r)}$ be the $p_r^{\alpha_r} \times p_r^{\alpha_r-1}(p_r-1)$ matrix  defined by the Kronecker product
\begin{equation}
\label{eq:Br_def}
    {\tt B}^{(r)} \coloneqq \begin{bmatrix}
        {\bf v}_1^{(r)} \cdots {\bf v}_{p_r - 1}^{(r)}
	\end{bmatrix}
    \otimes \idmat{p_r^{\alpha_r - 1}} ,
\end{equation}
and define ${\tt B}$ as the Kronecker product of all the ${\tt B}^{(r)}$ matrices,
\begin{equation}
\label{eq:kron_basis}
   {\tt B} \coloneqq {\tt B}^{(1)} \otimes \cdots \otimes {\tt B}^{(\omega)} .
\end{equation}
Since the Kronecker product multiplies row and column dimensions, ${\tt B}$ has dimensions
\begin{equation*}
 \left(\prod_{r=1}^{\omega}p_r^{\alpha_r}\right) \times    \left(\prod_{r=1}^{\omega}p_r^{\alpha_r-1}(p_r-1)\right) = N \times \phi(N).
\end{equation*}
The Kronecker product construction of \cref{eq:kron_basis} is a natural candidate for a basis of $\mathcal{K}$, as each factor corresponds to one of the prime-power zero-sum constraints defining $\mathcal{K}$. However, by the construction of ${\tt B}$ in \cref{eq:kron_basis}, the columns of ${\tt B}$ are naturally indexed by the standard tensor product basis of $\bigotimes_{r=1}^{\omega}\R^{p_r^{\alpha_r}}$,
\begin{equation}
\label{eq:tensorprod_basis}
    {\bf e}_{j_1}^{(p_1^{\alpha_1})}\otimes\cdots\otimes{\bf e}_{j_\omega}^{(p_\omega^{\alpha_\omega})} .
\end{equation}
In contrast, the rows and columns of the constraint matrix ${\tt A}$ are naturally indexed by the standard basis ${\bf e}_n^{(N)}$ of $\R^N$.    To compare these two descriptions, we first introduce a permutation matrix relating the corresponding orderings.  

Let ${\bf j}\colon Z_{N} \to \prod_{r=1}^{\omega} \Z_{p_{r}^{\alpha_{r}}}$ be the Chinese remainder isomorphism defined by 
\begin{equation}
\label{eq:CRT_indexmap}
   {\bf j}(n) =  (j_1(n),\ldots,j_\omega(n))\coloneqq(n \bmod p_1^{\alpha_1},\ldots,n\bmod p_\omega^{\alpha_\omega}),
\end{equation}
We order the tuples ${\bf j}(n)$ lexicographically, thereby inducing a lexicographic ordering of the tensor-product basis in \cref{eq:tensorprod_basis}.
We then define the permutation matrix ${\tt P}$ by
\begin{equation*}
    {\tt P}\left({\bf e}_n^{(N)}\right)= {\bf e}_{j_1(n)}^{(p_1^{\alpha_1})}\otimes\cdots\otimes{\bf e}_{j_\omega(n)}^{(p_\omega^{\alpha_\omega})} ,
\end{equation*}
Since the map $j$ is a bijection, ${\tt P}$ simply reorders the standard basis vectors of $\R^N$, and is therefore a permutation matrix.  

Since ${\tt P}$ is a permutation matrix, it is unimodular and therefore defines an automorphism of the lattice $\Z^N$.  Writing $\hat{\bf x}={\tt P}{\bf x}$ and $\hat{\tt A}={\tt A}{\tt P}^T$, for every ${\bf x}\in\Z^N$ we have 
\begin{equation}
\label{eq:xhat_in_ker}
    {\bf x}\in\mathcal{K} \iff {\tt A}{\bf x} = 0 \iff {\tt A}{\tt P}^T\hat{\bf x} = 0.  
\end{equation}
Therefore ${\bf x}$ is in the lattice $\mathcal{K}$ if and only if $\hat{\bf x}\in\ker(\hat{\tt A})\cap\Z^N$.  We now identify matrices with the same kernels as the transformed blocks of $\hat{\tt A}$, from which it will follow that the columns of ${\tt B}$ form an integer basis for $\ker(\hat{\tt A})$.

Write the block row partition of the constraint matrix as
\begin{equation*}
{\tt A} = \begin{bmatrix}
{\tt A}^{(1)} \\
\vdots \\
{\tt A}^{(\omega)}
\end{bmatrix} ,
\qquad {\tt A}^{(r)} \coloneqq \begin{bmatrix}
    \idmat{N/p_r} &\cdots & \idmat{N/p_r}
\end{bmatrix}
\end{equation*}
and define the transformed blocks by
\begin{equation*}  
\hat{\tt A}^{(r)} =  {\tt A}^{(r)}{\tt P}^T.  
\end{equation*}
To describe the kernels of the transformed blocks $\hat{\tt A}^{(r)}$, we further decompose the $r$th coordinate of ${\bf j}$ from \cref{eq:CRT_indexmap} by writing 
\begin{equation*}
\label{eq:ik_coords}
    j_r(n)=i_r(n)p_r^{\alpha_r-1}+k_r(n) , 
\end{equation*}
where  $0 \le i_{r} < p_{r}$ and $0 \le k_{r} < p_{r}^{\alpha_{r} - 1}$.
Applying this decomposition to the full index tuple ${\bf j}(n)$ yields,
  \begin{equation} \label{eq:j_expand}
      {\bf j}(n)=(j_1, \dots, j_{r-1},(i_r,k_r),j_{r+1},\dots,j_\omega).
  \end{equation}
We next observe that
\begin{equation*}
  {\tt A}^{(r)}{\bf e}_{n}^{(N)} = {\bf e}_{n\bmod N/p_{r}}^{(N/p_{r})},   
\end{equation*}
and hence the action of ${\tt A}^{(r)}$ depends only on the residue classes modulo $N/p_r$. 
We therefore need to characterize these residue classes in the ${\bf j}$-coordinates from \cref{eq:CRT_indexmap}.  By the Chinese remainder theorem, two indices satisfy
\begin{equation}
\label{eq:residue_equiv}
    n=n' \bmod{N/p_r} \iff \begin{cases}
j_s(n)=j_s(n'), & \text{for all } s\neq r,\\
k_r(n)=k_r(n'), 
\end{cases}
\end{equation} 
since $N/p_r = p_r^{\alpha_r-1}\prod_{s\neq r}p_s^{\alpha_s}$.

This observation motivates the auxiliary matrix
\begin{equation}
\label{eq:Atilde_def}
    \widetilde{{\tt A}}^{(r)} \coloneqq (\idmat{p_1^{\alpha_1} \cdots p_{r-1}^{\alpha_{r-1}}}) \otimes (1_{1 \times p_r} \otimes \idmat{p_r^{\alpha_r-1}}) \otimes (\idmat{p_{r+1}^{\alpha_{r+1}} \cdots p_\omega^{\alpha_\omega}}) ,
\end{equation}
whose action is precisely to sum over the coordinate $i_r$ while leaving all remaining coordinate indices in \cref{eq:j_expand} fixed.
We claim that
$\ker(\hat{\tt A}^{(r)})=\ker(\widetilde{\tt A}^{(r)})$.

Take an arbitrary vector $\hat{\bf x}={\tt P}{\bf x}$.    Using the decomposition $j_r=i_rp_r^{\alpha_r-1}+k_r$, define the coordinates of $\hat{\bf x}$ by
\begin{equation*}
\hat{x}(j_1,\ldots,j_{r-1},(i_r,k_r),j_{r+1},\ldots,j_\omega)
=
x_{{\bf j}^{-1}(j_1,\ldots,j_{r-1},(i_r,k_r),j_{r+1},\ldots,j_\omega)}.
\end{equation*} 
Then $\hat{\bf x}$ can be written in the tensor-product coordinates of \cref{eq:tensorprod_basis} as
\begin{align*}
    \hat{\bf x}= \sum_{\substack{ j_s \text{ for } s\ne r \\ i_r, k_r}}\hat{x}(j_1,\ldots,&j_{r-1},(i_r,k_r),j_{r+1},\ldots,j_\omega) \\
    &\cdot\left[{\bf e}_{j_1}^{(p_1^{\alpha_1})}\otimes\cdots\otimes\left({\bf e}_{i_r}^{(p_r)}\otimes{\bf e}_{k_r}^{(p_r^{\alpha_r-1})}\right)\otimes\cdots\otimes{\bf e}_{j_\omega}^{(p_\omega^{\alpha_\omega})}\right] ,
\end{align*}
where the $r$th tensor basis factor  ${\bf e}^{(p_r^{\alpha_r})}_{j_r}$ has been further indexed using the decomposition $j_r=i_rp_r^{\alpha_r-1}+k_r$.
A direct computation gives
\begin{align}
\label{eq:Atilde_comp}
 \widetilde{\tt A}^{(r)}\hat{\bf x}
&=
\sum_{\substack{ j_s \text{ for } s\ne r \\ i_r, k_r}}
\hat{x}(j_1,\ldots,j_{r-1},i_r,k_r,j_{r+1},\ldots,j_\omega) \notag \\
&\qquad\qquad\qquad\qquad\cdot
{\bf e}_{j_1}^{(p_1^{\alpha_1})}\otimes\cdots\otimes
({\bf 1}_{1\times p_r}{\bf e}_{i_r}^{(p_r)})\otimes{\bf e}_{k_r}^{(p_r^{\alpha_r-1})}
\otimes\cdots\otimes
{\bf e}_{j_\omega}^{(p_\omega^{\alpha_\omega})} \notag
\\
&=\sum_{\substack{ j_s \text{ for } s\ne r \\  k_r}}\left(\sum_{i_r=0}^{p_r-1}\hat{x}(j_1,\ldots,j_{r-1},i_r,k_r,j_{r+1},\ldots,j_\omega)\right) 
\\
&\qquad\qquad\qquad\qquad\cdot
{\bf e}_{j_1}^{(p_1^{\alpha_1})}\otimes\cdots\otimes
{\bf e}_{j_{r-1}}^{(p_{r-1}^{\alpha_{r-1}})}\otimes
{\bf e}_{k_r}^{(p_r^{\alpha_r-1})}\otimes
{\bf e}_{j_{r+1}}^{(p_{r+1}^{\alpha_{r+1}})}
\otimes\cdots\otimes
{\bf e}_{j_\omega}^{(p_\omega^{\alpha_\omega})}. \notag
\end{align}
As the tensor-product basis vectors ${\bf e}_{j_1}\otimes\cdots\otimes{\bf e}_{j_{r-1}}\otimes{\bf e}_{k_r}\otimes{\bf e}_{j_{r+1}}\otimes\cdots\otimes{\bf e}_{j_\omega}$ of the codomain of $\widetilde{\tt A}^{(r)}$ are linearly independent,
$\tilde{\tt A}^{(r)}\hat{\bf x}= {\bf0}$ if and only if every coefficient in \cref{eq:Atilde_comp} is zero.  
Therefore, we can conclude that $\tilde{\tt A}^{(r)}\hat{\bf x}= {\bf0}$ if and only if
\begin{equation}
\label{eq:cond_ker_tildeA}
     \sum_{i_r=0}^{p_r-1}\hat{x}(j_1,\ldots,j_{r-1},i_r,k_r,j_{r+1},\ldots,j_\omega) = 0 ,
\end{equation}
for all fixed choices of coordinates $j_1,\dots,j_{r_1},j_{r+1},\dots,j_\omega$ and $k_r$.
By \cref{eq:residue_equiv},
varying $i_r$ while fixing the remaining coordinates enumerates exactly the ${\bf j}$-image of one residue class modulo $N/p_r$.
Hence,
\cref{eq:cond_ker_tildeA} states precisely that the entries of ${\bf x}={\tt P}^T\hat{\bf x}$ sum to zero over every residue class modulo $N/p_r$.
This corresponds exactly with the condition ${\tt A}^{(r)}{\bf x} = {\bf 0}$.
Since $\hat{\bf x}={\tt P}{\bf x}$ and $\hat{\tt A}^{(r)} =  {\tt A}^{(r)}{\tt P}^T$,
the condition  ${\tt A}^{(r)}{\bf x} = {\bf 0}$ is equivalent to $\widehat{\tt A}^{(r)}\hat{\bf x}={\bf 0}$.
Therefore,
we have $\ker(\widetilde{\tt A}^{(r)})=\ker(\hat{\tt A}^{(r)})$.

As $\hat{\bf x}\in\ker(\widetilde{\tt A})$ if and only if $\hat{\bf x}\in\ker(\widetilde{\tt A}^{(r)})$ for all $r$, this immediately yields that 
\begin{equation*}
    \ker(\widetilde{\tt A})=\ker(\hat{\tt A}) ,
\end{equation*}
for the matrix $\widetilde{\tt A}$ defined block-wise by,
\begin{equation*}
    \widetilde{\tt A} = \begin{bmatrix}
    \widetilde{\tt A}^{(1)} \\
    \vdots \\
    \widetilde{\tt A}^{(\omega)}
    \end{bmatrix} .
\end{equation*}
Combining this with \cref{eq:xhat_in_ker}, we have that ${\bf x}\in\mathcal{K}$ if and only if $\hat{\bf x}\in\ker(\widetilde{\tt A})$.  We next need to show that the columns of ${\tt B}$ form an integer basis for $\ker(\widetilde{\tt A})$.

Fix $r$.  Using the definitions of \cref{eq:Br_def,eq:kron_basis,eq:Atilde_def}, a direct computation shows that every column of ${\tt B}$ is in $\ker(\widetilde{\tt A}^{(r)})$ as
\begin{align}
    \tilde{\tt A}^{(r)}{\tt B} &= \left((\idmat{p_1^{\alpha_1} \cdots p_{r-1}^{\alpha_{r-1}}}) \otimes (1_{1 \times p_r} \otimes \idmat{p_r^{\alpha_r-1}}) \otimes (\idmat{p_{r+1}^{\alpha_{r+1}} \cdots p_\omega^{\alpha_\omega}})\right)\cdot \left({\tt B}^{(1)} \otimes \cdots \otimes {\tt B}^{(\omega)} \right) \notag\\ 
  &= {\tt B}^{(1)} \otimes \cdots \otimes {\tt B}^{(r-1)}\otimes\left[(1_{1 \times p_r} \otimes \idmat{p_r^{\alpha_r-1}})\cdot{\tt B}^{(r)}\right]\otimes {\tt B}^{(r+1)}\otimes\cdots\otimes {\tt B}^{(\omega)} \notag \\
  &= {\tt B}^{(1)} \otimes \cdots \otimes {\tt B}^{(r-1)}\otimes\left[1_{1 \times p_r} \cdot \begin{bmatrix}
        {\bf v}_1^{(r)} \cdots {\bf v}_{p_r - 1}^{(r)}
	\end{bmatrix}\right]\otimes\idmat{p_r^{\alpha_r-1}}\otimes {\tt B}^{(r+1)}\otimes\cdots\otimes {\tt B}^{(\omega)} \notag \\
    &= {\tt 0} , \notag
\end{align}
where the last line follows since each ${\bf v}_\ell^{(r)}$ sums to 0.  As this holds for each $r$, every column of ${\tt B}$ lies in $\ker(\widetilde{\tt A})$.  Furthermore, $\nullity(\widetilde{\tt A})=\nullity({\tt A})=\phi(N)$, which is exactly the number of columns of ${\tt B}$.
To show that ${\tt B}$ satisfies the conditions for a basis,
it only remains to show that the columns are linearly independent.
Letting ${\tt G}={\tt B}^T{\tt B}$ be the Gram matrix of ${\tt B}$,
if $\det({\tt G})\neq0$,
then the columns of ${\tt B}$ are linearly independent.
Thus,
linear independence will follow from the determinant computation below.

To compute the determinant of ${\tt G}$,
we first express ${\tt G}$ as a Kronecker product of smaller Gram matrices ${\tt G}^{(r)}={{\tt B}^{(r)}}^T {\tt B}^{(r)}$ by
\begin{align}
\label{eq:G_def}
    {\tt G} &\coloneqq {\tt B}^T {\tt B}= \left({\tt B}^{(1)} \otimes \cdots \otimes {\tt B}^{(\omega)}\right)^T\cdot \left({\tt B}^{(1)} \otimes \cdots \otimes {\tt B}^{(\omega)}\right)\notag\\
    &= \left({{\tt B}^{(1)}}^T {\tt B}^{(1)} \right)\otimes\cdots\otimes\left({{\tt B}^{(\omega)}}^T {\tt B}^{(\omega)} \right) \notag\\
    &\coloneqq {\tt G}^{(1)}\otimes\cdots\otimes{\tt G}^{(\omega)} .
\end{align}
Then for any $r$, 
we can express ${\tt G}^{(r)}$ in terms of the gram matrix of the ${\bf v}^{(r)}_\ell$s,
\begin{align*}
    {\tt G}^{(r)}&=  {{\tt B}^{(r)}}^T {\tt B}^{(r)}\\
    &=\left(\begin{bmatrix}
    {\bf v}^{(r)}_1 \cdots {\bf v}^{(r)}_{p_r-1}
    \end{bmatrix}
    \otimes \idmat{p_r^{\alpha_r - 1}}\right)^T
    \left(\begin{bmatrix}
        {\bf v}_1^{(r)} \cdots {\bf v}^{(r)}_{p_r-1}
    \end{bmatrix}
    \otimes \idmat{p_r^{\alpha_r - 1}}\right) \\
    &= \left(\begin{bmatrix}
        {\bf v}_1^{(r)} \cdots {\bf v}^{(r)}_{p_r-1}
    \end{bmatrix}^T\begin{bmatrix}
        {\bf v}^{(r)}_1 \cdots {\bf v}^{(r)}_{p_r-1}
    \end{bmatrix}\right) \otimes \left(\idmat{p_r^{\alpha_r - 1}}^T\idmat{p_r^{\alpha_r - 1}}\right) \\
    &= \begin{bmatrix}
        {{\bf v}_1^{(r)}}^T{\bf v}^{(r)}_1 & \cdots & {{\bf v}_{p_r - 1}^{(r)}}^T{\bf v}^{(r)}_1 \\
        \vdots & \ddots & \vdots \\
        {{\bf v}^{(r)}_1}^T{\bf v}^{(r)}_{p_r-1} & \cdots & {{\bf v}_{p_r - 1}^{(r)}}^T{\bf v}^{(r)}_{p_r-1} \\
    \end{bmatrix}
    \otimes \idmat{p_r^{\alpha_r-1}}  \\ 
    &\coloneqq {\tt C}_{p_r-1}\otimes \idmat{p_r^{\alpha_r-1}} . 
\end{align*}
To determine the matrix ${\tt C}_{p_r-1}$, we apply the orthonormality of the standard basis to the definition of ${\bf v}^{(r)}_\ell$ in \cref{eq:vell_def}:
\begin{equation*}
    {{\bf v}_\ell^{(r)}}^T{\bf v}^{(r)}_{\ell'} 
    = {{\bf e}_\ell^{(p_r)}}^T{\bf e}_{\ell'}^{(p_r)}
    - {{\bf e}_{\ell-1}^{(p_r)}}^T{\bf e}_{\ell'}^{(p_r)}
    - {{\bf e}_{\ell}^{(p_r)}}^T{\bf e}_{\ell'-1}^{(p_r)}
    + {{\bf e}_{\ell-1}^{(p_r)}}^T{\bf e}_{\ell'-1}^{(p_r)}
    = \begin{cases}
        2 & \ell = \ell' \\
        -1 & \abs{\ell - \ell'} = 1 \\
        0 & \text{otherwise}.
    \end{cases}
\end{equation*}
This gives us the tridiagonal form of ${\tt C}_{p_r-1}$,
\begin{equation*}
    {\tt C}_{p_r-1}=
    \begin{bmatrix}
    2 & -1 &        &        &  \\
    -1 & 2 & -1     &        &  \\
       & \ddots & \ddots & \ddots &  \\
       &        & -1 & 2 & -1 \\
       &        &    & -1 & 2
    \end{bmatrix}, 
\end{equation*}
whose determinant is $\det({\tt C}_{p_r-1})=p_r$~\cite[Fact 3.20.7]{bernstein2009matrix}.
Using the determinant identity across Kronecker products from \cref{eq:kron_det},
we can compute
\begin{equation}
\label{eq:G_r_det}
    \det({\tt G}^{(r)}) = \det({\tt C}_{p_r-1}\otimes \idmat{p_r^{\alpha_r-1}}) = \det\left({\tt C}_{p_r-1}\right)^{p_r^{\alpha_r-1}}\det\left({\tt I}_{p_r^{\alpha_r-1}} \right)^{p_r-1} = p_r^{\,p_r^{\alpha_r-1}}.
\end{equation}
Finally, by repeatedly applying \cref{eq:kron_det} to the formula for ${\tt G}$ in \cref{eq:G_def},
and
substituting in the determinants from \cref{eq:G_r_det},
we obtain 
\begin{align*}
    \det({\tt G})
    = \prod_{r=1}^{\omega}(\det {\tt G}_r)^{\prod_{s\neq r}\phi(p_s^{\alpha_s})}
    = \prod_{r=1}^{\omega}p_r^{p_r^{\alpha_r-1}\prod_{s\neq r}\phi(p_s^{\alpha_s})}
    = \prod_{r=1}^{\omega}p_r^{(\prod_{s}\phi(p_s^{\alpha_s}))/(p_r-1)}
\end{align*}
Using $\phi(N)=\prod_{s}\phi(p_s^{\alpha_s})$ and taking the square root gives,
\begin{equation} \label{eq:G_det}
    \sqrt{\det({\tt G})}=\left(\prod_{r=1}^{\omega}p_r^{\phi(N)/(p_r-1)}\right)^{1/2}.
\end{equation}

This argument computed the determinant using a basis for $\ker(\hat{\tt A})$.
For our lattice of interest,
$\mathcal{K}$,
we consider the ${\tt P}^T{\tt B}$.
The unimodularity of ${\tt P}$ ensures that ${\tt P}^T{\tt B}$ maintains the integer entries and linear independence of ${\tt B}$,
so the relation ${\tt A} = \hat{\tt A}{\tt P}^T$ implies that ${\tt P}^T$ is a basis for $\mathcal{K}$.
Finally,
we complete the proof by computing the determinant of $\mathcal{K}$ in terms of this basis from \cref{eq:G_det},
\begin{equation*}
    \Lambda(\mathcal{K}) 
    = \sqrt{\det\left(({\tt B}{\tt P}^T)^T({\tt B}{\tt P})\right)} = \sqrt{\det({\tt B}^T{\tt B})} = \sqrt{\det({\tt G})}
    =\left(\prod_{r=1}^{\omega}p_r^{\phi(N)/(p_r-1)}\right)^{1/2}.
\end{equation*}
\end{proof}



\begin{funding}
This work was supported by the National Science Foundation (grant number 2513653).
\end{funding}


\bibliographystyle{emss}
\bibliography{bib1}

\end{document}